\documentclass[pdflatex,sn-mathphys-num]{sn-jnl}


\usepackage{graphicx}%
\usepackage{multirow}%
\usepackage{amsmath,amssymb,amsfonts}%
\usepackage{amsthm}%
\usepackage{mathrsfs}%
\usepackage[title]{appendix}%
\usepackage{xcolor}%
\usepackage{textcomp}%
\usepackage{manyfoot}%
\usepackage{booktabs}%
\usepackage{algorithm}%
\usepackage{algorithmicx}%
\usepackage{algpseudocode}%
\usepackage{listings}%


\theoremstyle{thmstyleone}%
\newtheorem{theorem}{Theorem}[section]
\newtheorem{proposition}[theorem]{Proposition}%
\newtheorem{lemma}[theorem]{Lemma}
\newtheorem{corollary}[theorem]{Corollary}

\theoremstyle{thmstyletwo}%
\newtheorem{example}[theorem]{Example}%
\newtheorem{remark}[theorem]{Remark}%

\theoremstyle{thmstylethree}%
\newtheorem{definition}[theorem]{Definition}%

\numberwithin{equation}{section}

\raggedbottom

\begin{document}

\title[A new complete proof of the random Brouwer fixed point theorem and its implied consequences of unification]{A new complete proof of the random Brouwer fixed point theorem and its implied consequences of unification}


\author[1]{\fnm{Qiang} \sur{Tu}}\email{qiangtu126@126.com}

\author[1]{\fnm{Xiaohuan} \sur{Mu}}\email{xiaohuanmu@163.com}

\author*[1]{\fnm{Tiexin} \sur{Guo}}\email{tiexinguo@csu.edu.cn}

\footnotetext{This work was supported by the National Natural Science Foundation of China
	(Grant No.12371141) and the Natural Science Foundation of Hunan Province of
	China (Grant No.2023JJ30642).}

\author[2]{\fnm{Goong} \sur{Chen}}\email{gchen@math.tamu.edu} 

\affil[1]{School of Mathematics and Statistics, Central South University, Changsha 410083, China}

\affil[2]{Department of Mathematics and Institute for Quantum Science and Engineering, Texas A\&M University, College Station, TX 77843, USA}


\abstract{
   We first establish a general random Sperner lemma by presenting a completely new approach for the theory of $L^{0}$-simplicial subdivisions of  $L^{0}$-simplexes. Based on this, we are able to achieve a new complete proof of the random Brouwer fixed theorem in random Euclidean spaces, which can provide a solid foundation for various contemporary applications of interest. Afterward, we unify the works currently available and closely related to the random Brouwer fixed theorem: we first prove that the stochastic Brouwer fixed point theorem occurring elsewhere in stochastic analysis is equivalent to a special  case of our random Brouwer fixed theorem, and then prove a general random Borsuk theorem and its equivalence with the random Brouwer fixed theorem. Finally, we conclude this paper with commentaries on recent state of study of the famous Schauder conjecture. 
	}

\keywords{Random Euclidean space, $L^0$-simplex, Random Sperner Lemma, Random Brouwer fixed point theorem, Random Borsuk theorem }


\pacs[MSC Classification]{46H25, 47H10, 46A50, 60H25}

\maketitle

\tableofcontents

\section{Introduction}\label{sec.1}

\par 
The celebrated Brouwer fixed point theorem \cite{Brouwer1912} states that every continuous map from a bounded closed convex subset of $\mathbb{R}^{d}$ to itself has a fixed point. Here and throughout, $\mathbb{R}^{d}$ denotes the $d$-dimensional Euclidean space. The Brouwer fixed point theorem  has numerous applications in analysis, geometry, differential equations and mathematical economics \cite{Mawhin2020,CM2023,Feltrin2022,Nash1951}. Furthermore, it is also attractive that Brouwer fixed point theorem is equivalent to many prominent fundamental results \cite{Mawhin2019,IKM2014,IKM2021,Park1999}, for example, the Knaster-Kuratowski-Mazurkiewicz (briefly, $KKM$) mapping principle \cite{KKM1929}, the classical Borsuk theorem \cite{Borsuk1967} and the Gustafson-Schmitt-Cid-Mawhin theorem on the existence of periodic solutions of ordinary differential equations \cite[Theorem 1.1]{CM2023}. Due to its fundamental importance \cite{Cas1996}, the Brouwer fixed point theorem has been given many proofs \cite{Mawhin2020,Park1999}. One of the most elegant proofs of all of its classical proofs is via the Sperner lemma \cite{Sperner1928}, in particular that proof also helps in developing a simplicial approximation of fixed points \cite{Kuhn1968,Sca1967}. Further, the Brouwer fixed point theorem has been generalized in various ways \cite{Kakutani1941,Sch1930,Ty1935} and has had a major impact on the development of nonlinear analysis and the general fixed point theory \cite{Bor1985,DM2021,GD2003,Cauty2007,ET2021,Yu2024}.

\par With an aim of developing random functional analysis \cite{Guo2010,GWXYC2025}, dynamic Nash equilibrium theory \cite{DKKS2013,GWXYC2025} and stochastic differential equations \cite{Ponosov1987,Ponosov1988,Ponosov2021,Ponosov2022}, there have been strong desires to extend the Brouwer fixed point theorem to the setting of a random Euclidean space $L^{0}(\mathcal{F},\mathbb{R}^{d})$, where $L^{0}(\mathcal{F},\mathbb{R}^{d})$ is the linear space of equivalence
classes of random vectors from a probability space $(\Omega,\mathcal{F},P)$ to the $d$-dimensional Euclidean space $\mathbb{R}^{d}$.
Since $L^{0}(\mathcal{F},\mathbb{R}):=L^{0}(\mathcal{F},\mathbb{R}^{1})$ is an algebra over the real number field $\mathbb{R}$,
$L^{0}(\mathcal{F},\mathbb{R}^{d})$ is a free module of rank $d$ over the algebra $L^{0}(\mathcal{F},\mathbb{R})$, and thus the notions of an
$L^{0}$-convex set and a $\sigma$-stable set can be introduced and developed. More importantly, $L^{0}(\mathcal{F},\mathbb{R}^{d})$ can be 
regarded as a special complete random normed module. The notion of a random normed module was introduced independently by Guo \cite{Guo1992,Guo1996,Guo1999,Guo2010,GMT2024,GWXYC2025} in connection
with the idea of random-structured space theory and by Gigli \cite{Gigli2018} in connection with nonsmooth differential 
geometry on metric measure spaces (see also \cite{BPS2023,CLPV2025,CGP2025,GLP2025,LP2019,LPV2024} for related advances). When regarded 
as a random normed module, $L^{0}(\mathcal{F},\mathbb{R}^{d})$ is often endowed with two topologies: one is the $(\varepsilon,\lambda)$-topology, 
denoted by $\mathcal{T}_{\varepsilon,\lambda}$, which coincides with the usual topology of convergence in probability and is a typical metrizable locally nonconvex 
linear topology and makes $(L^{0}(\mathcal{F},\mathbb{R}^{d}),\mathcal{T}_{\varepsilon,\lambda})$ a topological module over the topological algebra 
$(L^{0}(\mathcal{F},\mathbb{R}),\mathcal{T}_{\varepsilon,\lambda})$, while the other is the locally $L^{0}$-convex topology, denoted by $\mathcal{T}_{c}$, which 
was introduced in \cite{FKV2009}, coincides with the topology generated by random open balls, and makes $(L^{0}(\mathcal{F},\mathbb{R}^{d}),\mathcal{T}_{c})$ a topological module over the topological ring $(L^{0}(\mathcal{F},\mathbb{R}),\mathcal{T}_{c})$; see \cite{Guo2010} for the  inherent connections between the two kinds of topologies. Thus, according to different choices of $\mathcal{T}_{\varepsilon,\lambda}$ and $\mathcal{T}_{c}$, a random Brouwer fixed point theorem has possibly two corresponding formulations: the first is the formulation of random Brouwer fixed point theorem equipped with $\mathcal{T}_{c}$, which was first given in \cite{GWXYC2025} and is stated as Theorem \ref{thm.Brouwer} in this paper, while the second is the formulation with $\mathcal{T}_{\varepsilon,\lambda}$, which was first given in \cite{DKKS2013,GZWW2021} and is stated as Proposition \ref{prop.2.3} in this paper. It follows immediately from Lemma 4.3 of \cite{GWXYC2025} that Proposition \ref{prop.2.3}, namely, the $\mathcal{T}_{\varepsilon,\lambda}$-version of a random Brouwer fixed point theorem is a special case of Theorem \ref{thm.Brouwer} (namely, the $\mathcal{T}_{c}$-version of random Brouwer fixed point theorem). Hence, from now on, Theorem \ref{thm.Brouwer} will be called by us the random Brouwer fixed point theorem.

\par Just as the Brouwer fixed point theorem is the cornerstone of the topological fixed point theory, the random Brouwer fixed point theorem is of fundamental importance, for example, it has been used in \cite{GWXYC2025} for the proofs of both a noncompact Schauder fixed point theorem in random normed modules and the existence theorem of a conditional Nash equilibrium point. In particular, it will also be used to solve the open problems posed in \cite{GWXYC2025}, for example, in a forthcoming work where we will establish a noncompact $KKM$ mapping principle and further prove a general dynamic Nash equilibrium theorem while we will also provide an effective approximation algorithm for searching the dynamic Nash equilibrium point. The original proof of the random Brouwer fixed point theorem \cite{GWXYC2025} essentially employed a special random Sperner lemma (see the first part of \cite[Theorem 2.3]{DKKS2013}). However, the special random Sperner lemma has obvious limitations in its applications, and its proof given in \cite{DKKS2013} is also inaccessible to the reader, which renders our original proof of the random Brouwer fixed point theorem in \cite{GWXYC2025} neither complete nor satisfactory. Therefore, one of the central purposes of this paper is to give a new complete proof of the random Brouwer fixed point theorem. For this sake, in the frontal part of this paper we will first provide a new foundation (namely, a general random Sperner lemma) for such a complete proof.

\par  To emphasize the necessity of reconsidering the problem of randomizing the classical Sperner lemma, let us first recall the classical Sperner lemma and its remarkable applications, and give a brief comment on the previous work \cite{DKKS2013} on a special random Sperner lemma. In 1928, this classical lemma was given by Sperner \cite{Sperner1928}, also stated as Lemma \ref{lemm.Sperner} in this paper for the reader's convenience. Four decades later in 1968 an elegant proof was given by Kuhn \cite{Kuhn1968} via a graph-theoretic argument. As has been mentioned above, the Sperner lemma-based proof of the classical Brouwer fixed point theorem is significant since it has the following two merits. One is its elegance, the other is the constructiveness that leads to a simplicial approximation of fixed points \cite{Sca1967,Kuhn1968}. Mainly motivated by the proof of the classical Brouwer fixed point theorem, Drapeau et al. \cite{DKKS2013} first gave a special random Sperner lemma (see the first part of \cite[Theorem 2.3]{DKKS2013}) and further proved a special case of the random Brouwer fixed point theorem on an $L^{0}$-simplex (see the second part of \cite[Theorem 2.3]{DKKS2013}). Just as stated above, the classical Sperner lemma is so fundamental and important that it merits a complete random version, but the related, existing work in \cite{DKKS2013} has certain significant limitations, listed as follows:
\begin{enumerate}
	\item [(1)] Concerning the $L^{0}$-simplicial subdivision of an $L^{0}$-simplex,  limited by their method authors of \cite{DKKS2013} only considered the case of the $L^{0}$-barycentric subdivision so that  the special random Sperner lemma did not encompass the classical Sperner lemma as a special case. 
	\item [(2)] The $L^{0}$-barycentric subdivision is too special to ensure that the higher-fold $L^{0}$-barycentric subdivision of an $L^{0}$-simplex is again an $L^{0}$-barycentric subdivision so that their notion of an $L^{0}$-labeling function \cite[Definition 2.1]{DKKS2013} is not as natural as that of the usual labeling function.  When the special random Sperner lemma was applied to the proof of their special random Brouwer fixed point theorem on an $L^{0}$-simplex \cite[Theorem 2.3]{DKKS2013} the so obtained sequence of completely labeled $L^{0}$-simplexes approaching the desired fixed point is not decreasing, which rules out the possibility of designing an effective algorithm for the simplicial approximation of fixed points. 
	\item [(3)] Their proof of the special random Sperner lemma was similar to that of Kuhn \cite{Kuhn1968} for the classical Sperner lemma, but authors of \cite{DKKS2013} did not adopt a graph-theoretic argument and, thus, their proof was rather complicated, lacking readability as well as verifiability.
\end{enumerate}

\par To establish a more appeasing random Sperner lemma in order to overcome the limitations of the earlier approach , we first present a completely new approach for the theory of $L^{0}$-simplicial subdivisions of $L^{0}$-simplexes. In Sections \ref{sec.2}, \ref{sec.3} and \ref{sec.4}, we introduce a general notion of an $L^{0}$-simplicial subdivision of an $L^{0}$-simplex in connection with a usual simplicial subdivision of a classical simplex (see Definition \ref{defn.$L^{0}$-simplicial subdivision}), which helps us establish a key representation theorem of a proper $L^0$-labeling function by usual proper labeling functions (see Theorem \ref{thm.representation}). In turn, this representation theorem can help build a general random Sperner lemma in a concise way as a consequence of the classical Sperner lemma, where our success is achieved through the connection between proper $L^0$-labeling functions and usual proper labeling functions rather than by considering the classification of $L^{0}$-subsimplexes as in the proof of the special random Sperner lemma \cite{DKKS2013}. The general random Sperner lemma not only includes the classical Sperner lemma as a special case but also meets all the needs of our future applications. Further, by combining the general random Sperner lemma, Lemma 4.3 of \cite{GWXYC2025} and the randomized Bolzano-Weierstrass theorem \cite{KS2001} we are able to give a new complete proof of the random Brouwer fixed theorem in random Euclidean spaces and, in particular, we note that the proof contains constructive information that is quite desirable (see Remark \ref{rmk.Brouwer1}).

\par Recently, we have known from \cite{Ponosov1988,Ponosov2021,Ponosov2022}  that Ponosov has been working on a fixed point theory for a class of local and continuous-in-probability operators with 
the aim of providing a tool for the study of stochastic differential equations. In particular, in \cite{Ponosov2021} Ponosov proved a 
stochastic version of Brouwer fixed point theorem, which is stated as Proposition \ref{prop.2.4} in this paper and has played an 
essential role in the proofs of the two fixed point theorems --- Theorems 5.1 and 5.2 of \cite{Ponosov2022} that can be directly applied 
to the study of stochastic differential equations. In Section 5, we begin in proving that Ponosov's stochastic Brouwer fixed point theorem
\cite{Ponosov2021} is equivalent to a special case of our random Brouwer fixed point theorem by combining 
Ponosov's profound result \cite{Ponosov1987} and the theory of complete random inner product modules. In particular, 
we are able to obtain a somewhat surprising conclusion again by means of Ponosov's result \cite{Ponosov1987}: a $\mathcal{T}_{\varepsilon,\lambda}$-continuous $\sigma$-stable mapping from a $\mathcal{T}_{\varepsilon,\lambda}$-closed $L^{0}$-convex subset of $L^0(\mathcal{F},\mathbb{R}^d)$ to $L^0(\mathcal{F},\mathbb{R}^d)$ must be a.s. sequentially continuous (see Proposition \ref{prop.5.8}  of this paper), which shows that Proposition 3.1 of \cite{DKKS2013} is equivalent to either Proposition \ref{prop.2.3} or Proposition \ref{prop.2.4}.

\par The classical Borsuk theorem \cite{Borsuk1967} states that the unit sphere of an Euclidean space is not a retraction of the closed unit ball, 
and in turn is also equivalent to the classical Brouwer fixed point theorem. In \cite{DKKS2013}, a special random Borsuk theorem was proved, see Lemma 3.5 of \cite{DKKS2013}. In this paper we are able to prove a general random Borsuk theorem, and in particular we also prove that it is also equivalent to the random Brouwer fixed point theorem.

\par This paper is organized as follows.  In Section \ref{sec.2}, we provide some prerequisites and a survey of the random Brouwer fixed point theorem and its special cases currently in existence. In Section \ref{sec.3},  we establish  a key representation theorem of a proper $L^{0}$-labeling function by usual proper labeling functions, namely, Theorem \ref{thm.representation}. Based on Theorem \ref{thm.representation}, we prove Theorem \ref{thm.Sperner}, namely, a random Sperner lemma. As an application of Theorem \ref{thm.Sperner}, in Section \ref{sec.4}, we give a new complete proof of Theorem \ref{thm.Brouwer}.  
In Section \ref{sec.5}, we prove the equivalence of Proposition \ref{prop.2.3} and Proposition \ref{prop.2.4}. In Section \ref{sec.6}, we prove a general random Borsuk theorem --- Theorem \ref{thm.6.4} together with its equivalence with the random Brouwer fixed point theorem. Finally, in Section \ref{sec.7} we conclude this paper with commentaries on recent state of study of the famous Schauder conjecture.

\section{The random Brouwer fixed point theorem and a general random Sperner lemm. Preliminaries}\label{sec.2}

\subsection{The random Brouwer fixed point theorem and its special cases}\label{sec.2.1}

\par We begin by first introducing some basic notation and terminologies.

\par Throughout this paper, 
$(\Omega,\mathcal{F},P)$ denotes a given probability space,  $\mathbb{N}$ the set of positive integers and $\mathbb{K}$  the scalar 
field $\mathbb{R}$ of real numbers or $\mathbb{C}$ of complex numbers. We use $L^{0}(\mathcal{F},\mathbb{K})$ for the algebra of equivalence classes 
of  $\mathbb{K}$-valued random variables on $(\Omega,\mathcal{F},P)$, and $L^{0}(\mathcal{F},\mathbb{R}^{d})$ for the linear space of equivalence 
classes of random vectors from $(\Omega,\mathcal{F},P)$ to $\mathbb{R}^{d}$. As usual, $L^{0}(\mathcal{F},\mathbb{R})$ is briefly denoted 
by $L^{0}(\mathcal{F})$. $L^{0}(\mathcal{F},\mathbb{R}^{d})$ forms a free $L^{0}(\mathcal{F})$-module of rank $d$.  More generally, for any  separable metric space $M$,   $L^0(\mathcal{F},M)$ denotes the set of equivalence classes of $M$-valued $\mathcal{F}$-measurable functions on $(\Omega,\mathcal{F},P)$.

\par $\bar{L}^{0}(\mathcal{F})$ denotes the set of equivalence classes of extended real-valued random variables on $(\Omega,\mathcal{F},P)$.  
For any $\xi,\eta\in  \bar{L}^{0}(\mathcal{F})$, a partial order $\leq$ on $\bar{L}^{0}(\mathcal{F})$ is defined as follows: $\xi\leq \eta$ iff $\xi^0(\omega) \leq \eta^0(\omega)$ for almost all 
$\omega$ in $\Omega$ (briefly, $\xi^0(\omega) \leq \eta^0(\omega)$ a.s.), where $\xi^0$ and $\eta^{0}$ are arbitrarily chosen 
representatives of $\xi$ and $\eta$, respectively. Then $(\bar{L}^{0}(\mathcal{F}),\leq)$ forms a complete 
lattice (see \cite{DS1958} for details). 

\par As usual, $\xi< \eta$  means $\xi\leq \eta$ and $\xi\neq \eta$, whereas, for any $A\in \mathcal{F}$, $\xi< \eta$ on $A$ means 
$\xi^0(\omega)< \eta^0(\omega)$  for almost all $\omega\in A$, where $\xi^0$ and $\eta^0$ are again arbitrary representatives of $\xi$ and $\eta$, 
respectively. Furthermore, we denote $L_{+}^0(\mathcal{F}):=\{\xi\in L^0(\mathcal{F}):\xi\geq 0\}$ and 
$L_{++}^0(\mathcal{F}):=\{\xi\in L^0(\mathcal{F}):\xi>0~on~\Omega\}$. For any $A\in \mathcal{F}$,  we use $\tilde{I}_{A}$ for the 
equivalence class of $I_A$, where  $I_A$ is the characteristic function of $A$ (namely, $I_A(\omega)=1$ if $\omega\in A$ and $I_A(\omega)=0$ otherwise).

\par For any $\xi,\eta\in \bar{L}^{0}(\mathcal{F})$, we use $(\xi=\eta)$ for the equivalence class of $(\xi^{0}=\eta^{0})$,
where $(\xi^{0}=\eta^{0}):=\{\omega\in \Omega:\xi^{0}(\omega)=\eta^{0}(\omega)\}$, and $\xi^{0}$ and $\eta^{0}$ are arbitrarily chosen 
representatives of $\xi$ and $\eta$, respectively. Similarly, one can understand $(\xi<\eta)$ and $(\xi\leq\eta)$.

\par 
For any $x\in L^{0}(\mathcal{F}, \mathbb{R}^{d})$ and any $r\in L_{++}^{0}(\mathcal{F})$, the set $B(x,r):=\{ y \in L^{0}(\mathcal{F}, \mathbb{R}^{d}): \|x-y\|<r~\text{on}~\Omega\}$ 
is called the random open ball centered at $x$ with the random radius $r$, where the $L^{0}$-norm $\|\cdot\|$ is defined by $\|x\|=\sqrt{\sum_{i=1}^{d} x_{i}^2}$ for any 
$x=(x_{1},x_{2},\cdots,x_{d})\in L^{0}(\mathcal{F}, \mathbb{R}^{d})$. The collection $\{B(x,r):x\in L^{0}(\mathcal{F}, \mathbb{R}^{d}), r\in L_{++}^{0}(\mathcal{F})\}$ 
forms a base for some Hausdorff topology on $L^{0}(\mathcal{F}, \mathbb{R}^{d})$. Consequently,  $L^{0}(\mathcal{F}, \mathbb{R}^{d})$ can be  equipped 
with two different topologies: the topology of convergence in probability, denoted by $\mathcal{T}_{\varepsilon,\lambda}$, and the topology generated 
by random open balls, denoted by $\mathcal{T}_{c}$. $L^{0}(\mathcal{F},\mathbb{R}^{d})$ is called the random Euclidean space of rank $d$ when it is endowed with either $\mathcal{T}_{\varepsilon,\lambda}$ or $\mathcal{T}_{c}$.

\par  
A subset $G$ of $L^0(\mathcal{F},\mathbb{R}^{d})$ is said to be 
\begin{itemize}
	\item a.s. bounded if $\bigvee \{\|x\|:x\in G\}\in L_{+}^{0}(\mathcal{F})$.
	\item $L^{0}$-convex if $\lambda x+(1-\lambda) y\in G$ for any $x,y\in G$ and any $\lambda\in L_{+}^{0}(\mathcal{F})$ with $\lambda\leq 1$.
	\item $\sigma$-stable if $\sum_{n=1}^{\infty}\tilde{I}_{A_{n}}x_{n}\in G$ for any sequence 
	$\{x_{n},n\in \mathbb{N}\}$ in $G$ and any countable partition $\{A_n, n\in \mathbb{N} \}$ of $\Omega$ to $\mathcal{F}$ (namely, each 
	$A_n\in \mathcal{F}, A_i\cap A_j=\emptyset$ for any $i\neq j$, and $\bigcup_{n=1}^{\infty} A_n= \Omega$). Here, $\sum_{n=1}^{\infty}\tilde{I}_{A_{n}}x_{n}$ is well defined, namely,
	it allows a natural interpretation: arbitrarily choosing a representative $x_{n}^{0}$ of $x_{n}$ for each $n\in \mathbb{N}$, the function $\sum_{n=1}^{\infty}I_{A_{n}}x_{n}^{0}$ defined 
	by $(\sum_{n=1}^{\infty}I_{A_{n}}x_{n}^{0})(\omega)=x_{n}^{0}(\omega)$ when $\omega\in A_{n}$ for some $n\in \mathbb{N}$, is exactly a representative of $\sum_{n=1}^{\infty}\tilde{I}_{A_{n}}x_{n}$.
\end{itemize}
A mapping $f:G\rightarrow G$ is said to be 
\begin{itemize}
	\item $\sigma$-stable  if $G$ is $\sigma$-stable and $f(\sum_{n=1}^{\infty}\tilde{I}_{A_{n}}x_{n})=\sum_{n=1}^{\infty}\tilde{I}_{A_{n}}f(x_{n})$ 
	for any sequence $\{x_{n},n\in \mathbb{N}\}$ in $G$ and any countable partition $\{A_n, n\in \mathbb{N} \}$ of $\Omega$ to $\mathcal{F}$.
	\item $\mathcal{T}_{\varepsilon,\lambda}$-continuous if 
	$f$ is a continuous mapping from $(G,\mathcal{T}_{\varepsilon,\lambda})$ to $(G,\mathcal{T}_{\varepsilon,\lambda})$.
	\item $\mathcal{T}_{c}$-continuous if 
	$f$ is a continuous mapping from $(G,\mathcal{T}_{c})$ to $(G,\mathcal{T}_{c})$.
	\item a.s. sequentially continuous  if for any $x\in G$ and any sequence $\{x_{n},n\in \mathbb{N}\}$ in $G$ 
	such that $\{\|x_{n}-x\|,n\in \mathbb{N}\}$ converges a.s. to $0$, then we have $\{\|f(x_{n})-f(x)\|,n\in \mathbb{N}\}$ converges a.s. to $0$.
	\item random sequentially continuous if $G$ is $\sigma$-stable and for any $x\in G$ and any sequence $\{x_{n},n\in \mathbb{N}\}$ in $G$ convergent in $\mathcal{T}_{\varepsilon,\lambda}$ to $x$ there
	exists a random subsequence $\{x_{n_{k}},k\in \mathbb{N}\}$ of $\{x_{n},n\in \mathbb{N}\}$ such that $\{f(x_{n_{k}}),k\in \mathbb{N}\}$ converges in $\mathcal{T}_{\varepsilon,\lambda}$ to $f(x)$. Here, by
	$\{x_{n_{k}},k\in \mathbb{N}\}$ being a random subsequence of $\{x_{n},n\in \mathbb{N}\}$, we mean that $\{n_{k},k\in \mathbb{N}\}$ is a sequence of equivalence classes of positive-integer-valued 
	 random variables such that the following two conditions are satisfied:
	 \begin{itemize}
	 	\item [(1)] $n_{k}<n_{k+1}$ on $\Omega$ for any $k\in \mathbb{N}$;
	 	\item [(2)] $x_{n_{k}}=\sum_{l=1}^{\infty}I_{(n_{k}=l)}x_{l}$ for any $k\in \mathbb{N}$, where $n_{k}^{0}$ is an arbitrarily chosen representative of $n_{k}$, $(n_{k}=l)$ is the equivalence class of $A_{k,l}:=\{\omega\in\Omega:n_{k}^{0}(\omega)=l\}$ and $I_{(n_{k}=l)}=\tilde{I}_{A_{k,l}}$. 
	 \end{itemize} 
\end{itemize}

\par  Theorem \ref{thm.Brouwer} below is just the so called the random Brouwer fixed point theorem (Earlier, a sketch of proof, rather incomplete, was given in \cite[Lemma 4.6]{GWXYC2025}).

\begin{theorem}[The Random Brouwer Fixed Point Theorem]\label{thm.Brouwer}
	Let $G$ be a $\sigma$-stable, a.s. bounded, $\mathcal{T}_{c}$-closed and $L^0$-convex subset of $L^0(\mathcal{F},\mathbb{R}^d)$ 
	and  $f: G \rightarrow G$ be a $\sigma$-stable $\mathcal{T}_{c}$-continuous mapping. Then $f$ has a fixed point. 
\end{theorem}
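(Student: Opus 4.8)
The plan is to deduce Theorem \ref{thm.Brouwer} from the random Sperner lemma (Theorem \ref{thm.Sperner}) by faithfully randomizing the classical Sperner-to-Brouwer argument, while using the three tools flagged in the introduction to cope with the module-theoretic and topological subtleties of $L^0(\mathcal{F},\mathbb{R}^d)$. First I would reduce the general $L^0$-convex body $G$ to a canonical $L^0$-simplex $\Delta$. Since $G$ is $\sigma$-stable, a.s.\ bounded, $L^0$-convex and $\mathcal{T}_c$-closed, I expect one can $\sigma$-stably retract an enclosing $L^0$-simplex onto $G$, or equivalently embed the problem on $G$ into one on a standard $L^0$-simplex so that a fixed point on the simplex yields one on $G$. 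Concretely, I would choose a large $L^0$-simplex $\Delta$ (with random vertices, stable under countable concatenation) containing $G$, take the $\sigma$-stable $\mathcal{T}_c$-continuous nearest-point (or radial) retraction $r:\Delta\to G$, and consider $f\circ r:\Delta\to G\subseteq\Delta$; a fixed point of $f\circ r$ automatically lies in $G$ and is fixed by $f$. The key structural fact I would invoke is Lemma 4.3 of \cite{GWXYC2025}: because $f$ (hence $f\circ r$) is $\sigma$-stable and $\mathcal{T}_c$-continuous, it is \emph{random sequentially continuous}, so all convergence arguments may be carried out in the much friendlier $(\varepsilon,\lambda)$-topology.

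Next I would run the Sperner machine on $\Delta$. For each $m\in\mathbb{N}$ I take the $m$-th $L^0$-barycentric (or any $L^0$-simplicial) subdivision $\mathscr{S}_m$ of $\Delta$ provided by the theory of Section \ref{sec.3}, and define an $L^0$-labeling of its vertices by the standard Sperner rule: an $L^0$-vertex $v=\sum_i \lambda_i\, e_i$ receives a label $j$ for which the $j$-th barycentric coordinate does not increase under the map, i.e.\ where $(f\circ r)(v)_j\le v_j$ in the $L^0$-order (making the choice measurably/$\sigma$-stably so that the resulting labeling function is a \emph{proper $L^0$-labeling function}). Here the representation theorem (Theorem \ref{thm.representation}) is the crucial lever: it lets me express this proper $L^0$-labeling in terms of ordinary proper labeling functions on the counterpart classical subdivisions, so that Theorem \ref{thm.Sperner} applies and guarantees an $L^0$-completely-labeled $L^0$-simplex $\sigma_m$ in $\mathscr{S}_m$. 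By construction the vertices of $\sigma_m$ carry all $d+1$ labels, which forces the barycentric coordinates of $(f\circ r)$ and of the identity to be comparable in every coordinate; as the mesh of $\mathscr{S}_m$ tends (a.s.)\ to $0$, any vertex $x_m$ of $\sigma_m$ is an approximate fixed point in the sense that $\|(f\circ r)(x_m)-x_m\|\to 0$ in an $L^0$ (a.s.) sense.

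Finally I would extract an actual fixed point via the randomized Bolzano--Weierstrass theorem \cite{KS2001}. The sequence $\{x_m\}$ lies in the a.s.\ bounded, $\mathcal{T}_{\varepsilon,\lambda}$-closed, $L^0$-convex set $\Delta$, which is generally noncompact but \emph{random sequentially compact}; hence $\{x_m\}$ admits a random subsequence $\{x_{m_k}\}$ converging in $\mathcal{T}_{\varepsilon,\lambda}$ to some $x^\ast\in\Delta$. Invoking random sequential continuity of $f\circ r$ along this random subsequence, together with the a.s.\ vanishing of $\|(f\circ r)(x_{m_k})-x_{m_k}\|$, I pass to the limit to obtain $(f\circ r)(x^\ast)=x^\ast$; since the image lies in $G$, in fact $x^\ast\in G$ and $r(x^\ast)=x^\ast$, whence $f(x^\ast)=x^\ast$. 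The main obstacle I anticipate is \emph{measurable/$\sigma$-stable selection}: both the labeling rule and the passage between the $L^0$-subdivision and its classical counterparts must be carried out so as to respect $\sigma$-stability and yield an honestly \emph{proper} $L^0$-labeling on every patch of $\Omega$ simultaneously — this is precisely where Theorem \ref{thm.representation} must do the heavy lifting, and where care is needed to align the random subsequence indices across different mesh levels so that the Bolzano--Weierstrass extraction and the continuity argument interlock cleanly.
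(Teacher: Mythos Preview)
Your proposal is correct and follows essentially the same approach as the paper: reduce $G$ to an enclosing $L^{0}$-simplex via the random nearest-point projection (the paper's Lemma \ref{lemm.projection}), construct a proper $L^{0}$-labeling from the map (the paper's Lemma \ref{lemm.label}), apply the random Sperner lemma to successive $L^{0}$-barycentric subdivisions, and extract a fixed point via random sequential compactness and random sequential continuity (Lemma 4.3 and Lemma 4.4 of \cite{GWXYC2025}). The only noteworthy variation is that the paper subdivides the \emph{already-found} completely labeled $L^{0}$-simplex at each stage to obtain a nested sequence $S^{m+1}\subseteq S^{m}$ (see Remark \ref{rmk.Brouwer1}), whereas you subdivide the whole $\Delta$ each time; both variants work, but the nested version lets a single $L^{0}$-labeling function serve at every stage and yields the monotone inclusion the paper flags as useful for later applications.
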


\par Since $L^0(\mathcal{F},\mathbb{R}^{d})$ is a $\mathcal{T}_{\varepsilon,\lambda}$-complete random normed module, every $\mathcal{T}_{\varepsilon,\lambda}$-closed 
and $L^0$-convex subset of $L^0(\mathcal{F},\mathbb{R}^d)$ must be $\sigma$-stable, and Lemma 4.3 of \cite{GWXYC2025} shows that a $\sigma$-stable mapping $f$ is $\mathcal{T}_{c}$-continuous iff $f$ is random sequentially continuous.  It is also known from \cite{Guo2010} that a $\sigma$-stable subset of $L^0(\mathcal{F},\mathbb{R}^{d})$ is 
$\mathcal{T}_{\varepsilon,\lambda}$-closed iff it is $\mathcal{T}_{c}$-closed, so Proposition \ref{prop.2.2} below, which was first given as Lemma 4.6 of \cite{GWXYC2025}, 
is an equivalent formulation of Theorem \ref{thm.Brouwer}.

\begin{proposition}\label{prop.2.2}
	Let $G$ be an a.s. bounded, $\mathcal{T}_{\varepsilon,\lambda}$-closed and $L^0$-convex subset of $L^0(\mathcal{F},\mathbb{R}^d)$ 
	and  $f: G \rightarrow G$ be a $\sigma$-stable random sequentially continuous mapping. Then $f$ has a fixed point. 
\end{proposition}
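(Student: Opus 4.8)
The plan is to first prove the statement when $G$ is an $L^{0}$-simplex and then to reduce the general case to this one. First I note that, as observed right after Theorem \ref{thm.Brouwer}, every $\mathcal{T}_{\varepsilon,\lambda}$-closed $L^{0}$-convex subset of $L^{0}(\mathcal{F},\mathbb{R}^{d})$ is automatically $\sigma$-stable, so $G$ is $\sigma$-stable and the hypotheses on $f$ are meaningful. Since $G$ is a.s. bounded it is contained in some a.s. bounded $L^{0}$-simplex $\Delta=[p_{0},p_{1},\cdots,p_{d}]$, and since $G$ is a $\mathcal{T}_{\varepsilon,\lambda}$-closed $L^{0}$-convex subset of the complete random inner product module $L^{0}(\mathcal{F},\mathbb{R}^{d})$, the random nearest-point projection $P$ onto $G$ is well defined, $\sigma$-stable and nonexpansive, hence a.s. sequentially continuous and in particular random sequentially continuous. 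Then $f\circ P:\Delta\rightarrow G\subseteq\Delta$ is $\sigma$-stable and random sequentially continuous, and any fixed point $x^{*}$ of $f\circ P$ lies in $G$ (because $x^{*}=f(P(x^{*}))\in G$), forcing $P(x^{*})=x^{*}$ and hence $f(x^{*})=x^{*}$. Thus it suffices to treat $G=\Delta$.

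Assume then $G=\Delta=[p_{0},\cdots,p_{d}]$. Each $x\in\Delta$ has unique barycentric coordinates $\lambda_{0}(x),\cdots,\lambda_{d}(x)\in L_{+}^{0}(\mathcal{F})$ with $\sum_{i=0}^{d}\lambda_{i}(x)=1$ and $x=\sum_{i=0}^{d}\lambda_{i}(x)p_{i}$; each $\lambda_{i}$ is $L^{0}$-affine, hence $\mathcal{T}_{\varepsilon,\lambda}$-continuous and $\sigma$-stable, and I write $\mu_{i}(x)$ for the barycentric coordinates of $f(x)$. To each vertex $v$ of a subdivision I would attach a (random) label by partitioning $\Omega$ into measurable sets according to the least index $i$ for which $\lambda_{i}(v)>0$ and $\mu_{i}(v)\le\lambda_{i}(v)$ on that part; such an $i$ exists a.s., for otherwise $\mu_{i}(v)>\lambda_{i}(v)$ for every index with $\lambda_{i}(v)>0$ would give $1=\sum_{i}\mu_{i}(v)\ge\sum_{i:\lambda_{i}(v)>0}\mu_{i}(v)>\sum_{i:\lambda_{i}(v)>0}\lambda_{i}(v)=1$, a contradiction. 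This defines a $\sigma$-stable $L^{0}$-labeling function, and it is proper: if $v$ lies on the $L^{0}$-face spanned by $\{p_{i}:i\in J\}$ then $\lambda_{i}(v)=0$ for $i\notin J$, so the requirement $\lambda_{i}(v)>0$ forces the label into $J$. Applying the random Sperner lemma (Theorem \ref{thm.Sperner}), whose use rests on the representation of a proper $L^{0}$-labeling by ordinary proper labelings (Theorem \ref{thm.representation}), to a sequence of $L^{0}$-simplicial subdivisions of $\Delta$ whose mesh tends a.s. to $0$, I obtain for each $n$ a completely labeled $L^{0}$-simplex with vertices $v_{0}^{(n)},\cdots,v_{d}^{(n)}$, where $v_{i}^{(n)}$ carries label $i$, so that $\mu_{i}(v_{i}^{(n)})\le\lambda_{i}(v_{i}^{(n)})$ for every $i$.

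To pass to the limit I would invoke the noncompactness-handling tools. Since $\Delta$ is a.s. bounded, so is $\{v_{0}^{(n)}\}$, and the randomized Bolzano-Weierstrass theorem yields a randomized subsequence along which $v_{0}^{(n)}$ converges a.s. to some $x^{*}\in\Delta$. Because the diameters of the completely labeled simplexes tend a.s. to $0$, along the same randomized subsequence every $v_{i}^{(n)}$ converges a.s., hence in $\mathcal{T}_{\varepsilon,\lambda}$, to $x^{*}$. Using the random sequential continuity of $f$ successively for $i=0,1,\cdots,d$ and refining the randomized subsequence at each step, I may assume $f(v_{i}^{(n)})\rightarrow f(x^{*})$ in $\mathcal{T}_{\varepsilon,\lambda}$ for all $i$ at once; since the barycentric coordinates are $\mathcal{T}_{\varepsilon,\lambda}$-continuous this passes to $\mu_{i}(x^{*})\le\lambda_{i}(x^{*})$ for every $i$. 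As $\sum_{i}\mu_{i}(x^{*})=\sum_{i}\lambda_{i}(x^{*})=1$, all these inequalities are equalities, whence $f(x^{*})=\sum_{i}\mu_{i}(x^{*})p_{i}=\sum_{i}\lambda_{i}(x^{*})p_{i}=x^{*}$.

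The two genuinely nonclassical difficulties, and where I expect the main obstacle to lie, are as follows. First, the combinatorial heart --- producing a completely labeled $L^{0}$-simplex in every subdivision --- cannot be handled by a naive pointwise-in-$\omega$ argument, because the label of a vertex is intrinsically $L^{0}$-valued and varies measurably over $\Omega$; this is precisely what the representation theorem (Theorem \ref{thm.representation}) and the ensuing random Sperner lemma (Theorem \ref{thm.Sperner}) are built to overcome. Second, an a.s. bounded $L^{0}$-convex set is generally noncompact, so no ordinary convergent subsequence of $\{v_{0}^{(n)}\}$ need exist; this forces randomized subsequences, and since random sequential continuity only delivers convergence of images along a \emph{further} randomized subsequence, the finitely many nested extractions must be organized so that a single randomized subsequence serves all $d+1$ coordinates simultaneously.
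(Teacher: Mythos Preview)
Your proposal is correct and follows essentially the same route as the paper: reduce to an $L^{0}$-simplex via the random nearest-point projection (the paper's Lemma \ref{lemm.projection}), construct the Sperner-type $L^{0}$-labeling from the barycentric coordinates of $x$ and $f(x)$ (the paper's Lemma \ref{lemm.label}), apply the random Sperner lemma (Theorem \ref{thm.Sperner}) to successively finer subdivisions, and extract a limit using the randomized Bolzano--Weierstrass theorem together with iterated random-subsequence refinements to accommodate random sequential continuity (handled in the paper via Lemma 4.4 of \cite{GWXYC2025}). The only cosmetic difference is that the paper subdivides the completely labeled $L^{0}$-simplex found at each stage, obtaining a nested sequence $S^{m+1}\subseteq S^{m}$ (see Remark \ref{rmk.Brouwer1}), whereas you take ever-finer subdivisions of the original $\Delta$; both variants work.
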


\par It is obvious that a $\mathcal{T}_{\varepsilon,\lambda}$-continuous mapping defined on a $\sigma$-stable set is random sequentially continuous.  Proposition \ref{prop.2.3} below, which was first given in \cite[Proposition 4.2]{GZWW2021} is a special case of Proposition \ref{prop.2.2} (namely, Theorem \ref{thm.Brouwer}).

\begin{proposition}\label{prop.2.3}
	Let $G$ be an a.s. bounded, $\mathcal{T}_{\varepsilon,\lambda}$-closed and $L^0$-convex subset of $L^0(\mathcal{F},\mathbb{R}^d)$ 
	and  $f: G \rightarrow G$ be a $\sigma$-stable $\mathcal{T}_{\varepsilon,\lambda}$-continuous mapping. Then $f$ has a fixed point. 
\end{proposition}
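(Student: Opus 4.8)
The plan is to obtain Proposition \ref{prop.2.3} as an immediate corollary of Proposition \ref{prop.2.2}, which is already available to us. Since the geometric hypotheses imposed on $G$ are verbatim the same in the two statements, the only thing I need to check is that the map $f$ satisfies the continuity hypothesis of Proposition \ref{prop.2.2}; that is, I must verify that every $\sigma$-stable $\mathcal{T}_{\varepsilon,\lambda}$-continuous mapping on a $\sigma$-stable set is random sequentially continuous.

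First I would record that $G$ is $\sigma$-stable. By the remark preceding Proposition \ref{prop.2.2}, every $\mathcal{T}_{\varepsilon,\lambda}$-closed and $L^0$-convex subset of $L^0(\mathcal{F},\mathbb{R}^d)$ is automatically $\sigma$-stable, so $G$ meets all the hypotheses imposed on the domain in Proposition \ref{prop.2.2}.

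Next I would verify random sequential continuity directly from the definition. Fix $x\in G$ and a sequence $\{x_n,n\in\mathbb{N}\}$ in $G$ converging to $x$ in $\mathcal{T}_{\varepsilon,\lambda}$. Because $f$ is $\mathcal{T}_{\varepsilon,\lambda}$-continuous, the whole image sequence $\{f(x_n),n\in\mathbb{N}\}$ already converges to $f(x)$ in $\mathcal{T}_{\varepsilon,\lambda}$. It then suffices to exhibit a single random subsequence with the required convergence property, and the trivial choice $n_k:=k$ (the constant positive-integer-valued equivalence class, for each $k$) does the job: it satisfies $n_k<n_{k+1}$ on $\Omega$ and yields $x_{n_k}=x_k$, so that $\{f(x_{n_k}),k\in\mathbb{N}\}$ is exactly the convergent sequence $\{f(x_n),n\in\mathbb{N}\}$. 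Hence $f$ is random sequentially continuous, and applying Proposition \ref{prop.2.2} to $G$ and $f$ produces the desired fixed point.

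The point worth emphasizing is that there is no genuine obstacle at the level of Proposition \ref{prop.2.3} itself: random sequential continuity demands only the \emph{existence} of one convergent random subsequence, which is strictly weaker than full $\mathcal{T}_{\varepsilon,\lambda}$-continuity, so the implication is immediate. All the real difficulty is concentrated in Proposition \ref{prop.2.2} (equivalently Theorem \ref{thm.Brouwer}), whose proof via the random Sperner lemma is the substantive content of the paper.
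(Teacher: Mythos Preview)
Your proposal is correct and follows essentially the same route as the paper: the paper simply remarks that a $\mathcal{T}_{\varepsilon,\lambda}$-continuous mapping on a $\sigma$-stable set is obviously random sequentially continuous, so that Proposition~\ref{prop.2.3} is a special case of Proposition~\ref{prop.2.2}. Your explicit verification via the trivial random subsequence $n_k=k$ is exactly the content behind that ``obvious'' remark.
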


\par 
Let $X$ and $Y$ be two separable metric spaces, it is well known that $L^{0}(\mathcal{F},X)$ and $L^{0}(\mathcal{F},Y)$ are metrizable spaces when they are endowed
with the topologies of convergence in probability, respectively. Further, let $G$ be a nonempty subset of $L^{0}(\mathcal{F},X)$, a mapping $h:G\rightarrow L^{0}(\mathcal{F},Y)$
is said to be local if $x^{0}(\omega)=y^{0}(\omega)$ a.s. on $A$ implies $h(x)^{0}(\omega)=h(y)^{0}(\omega)$ a.s. on $A$ for any $x$ and $y$ in $G$ and any $A\in \mathcal{F}$, where
$x^{0}$, $y^{0}$, $h(x)^{0}$ and $h(y)^{0}$ are any given representatives of $x$, $y$, $h(x)$ and $h(y)$, respectively. Finally, $h$ is said to be continuous in probability if 
$h$ is a continuous mapping from $G$ to $L^{0}(\mathcal{F},Y)$ when $G$ and $L^{0}(\mathcal{F},Y)$ are endowed with the topologies of convergence in probability. In fact, 
a mapping is continuous in probability iff it is $\mathcal{T}_{\varepsilon,\lambda}$-continuous.

\par As usual, $2^{\mathbb{R}^{d}}$ denotes the family of subsets of $\mathbb{R}^{d}$. Let $U:\Omega\rightarrow 2^{\mathbb{R}^{d}}$ be a set-valued mapping satisfying the following two conditions: (1) $U(\omega)$ is a nonempty bounded closed convex subset of $\mathbb{R}^{d}$
for each $\omega\in \Omega$; (2) $Gr(U):=\{(\omega,x)\in \Omega\times \mathbb{R}^{d}:x\in U(\omega)\}\in \mathcal{F}\otimes Bor(\mathbb{R}^{d})$, namely, $U$ has a measurable graph, where $Bor(\mathbb{R}^{d})$ stands
for the Borel $\sigma$-algebra of $\mathbb{R}^{d}$.

\par Further, for $U$ as above, we always denote by $G_{U}$ the set of equivalence classes of almost surely measurable selections of $U$, namely, 
$G_{U}:=\{x\in L^0(\mathcal{F},\mathbb{R}^d):x$ has a representative $x^{0}$ such that $x^{0}(\omega)\in U(\omega)~a.s.\}$.

\par  Then it is not difficult to verify that $G_{U}$ is an a.s. bounded $\mathcal{T}_{\varepsilon,\lambda}$-closed $L^{0}$-convex subset of $L^0(\mathcal{F},\mathbb{R}^d)$ (see Lemma \ref{lemm.5.3} of this paper), 
so $G_{U}$ is also $\sigma$-stable and it is easy to check that a mapping $h:G_{U}\rightarrow G_{U}$ is local iff $h$ is $\sigma$-stable (see Lemma \ref{lemm.5.1} of this paper).

\par Proposition \ref{prop.2.4} below, which was called the stochastic version of Brouwer fixed point theorem in \cite{Ponosov2021}, has played an essential role in the proofs of Theorems 5.1 and 5.2 of \cite{Ponosov2022}. It is obvious that  
Proposition \ref{prop.2.4} is a special case of Proposition \ref{prop.2.3}. The original proof of Proposition \ref{prop.2.4} was based on Ponosov's solution of the Nemytskii conjecture \cite{Ponosov1987} 
and a measurable selection theorem from \cite{Wagner1977}. In this paper, we can even prove that Proposition \ref{prop.2.4} is  equivalent to Proposition \ref{prop.2.3} by means of Ponosov's result
\cite{Ponosov1987}. In particular, we can also obtain a somewhat surprising conclusion again by means of Ponosov's result \cite{Ponosov1987}: a $\mathcal{T}_{\varepsilon,\lambda}$-continuous $\sigma$-stable mapping
from a $\mathcal{T}_{\varepsilon,\lambda}$-closed $L^{0}$-convex subset of $L^0(\mathcal{F},\mathbb{R}^d)$ to $L^0(\mathcal{F},\mathbb{R}^d)$ must be a.s. sequentially continuous, see Proposition \ref{prop.5.8}  of this paper.

\par Proposition \ref{prop.2.4} is a slightly more general formulation of Theorem 2.1 of \cite{Ponosov2021} (see also Lemma B.4 of \cite{Ponosov2022}) in that we have removed the assumption on completeness of $(\Omega,\mathcal{F},P)$ (otherwise, we first consider $(\Omega,\hat{\mathcal{F}},P)$, which is the completion of $(\Omega,\mathcal{F},P)$, and obtain a fixed point $x$ of $h$ as in \cite{Ponosov2021} and $x$ has a representative $x^{0}$ such that $x^{0}(\omega)\in U(\omega)$ a.s. and $x^{0}$ is $\hat{\mathcal{F}}$-measurable, whereas $x^{0}$ is equal a.s. to an $\mathcal{F}$-measurable function $\hat{x}^{0}$. Then the equivalence class $\hat{x}$ of $\hat{x}^{0}$ is exactly our desired fixed point).

\begin{proposition}\label{prop.2.4}
	Let $U:\Omega\rightarrow 2^{\mathbb{R}^{d}}$ be a set-valued function with a measurable graph such that 
	$U(\omega)$ is a nonempty bounded closed convex subset of $\mathbb{R}^{d}$ for each $\omega\in \Omega$. Then every local and continuous-in-probability
	operator from $G_{U}$ to itself has a fixed point.
\end{proposition}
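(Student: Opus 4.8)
The plan is to recognize Proposition \ref{prop.2.4} as a direct specialization of the $\mathcal{T}_{\varepsilon,\lambda}$-version of random Brouwer fixed point theorem, namely Proposition \ref{prop.2.3}, once its hypotheses are translated into the language of $L^0$-modules. First I would verify that the domain $G_U$ fits the framework of Proposition \ref{prop.2.3}. By Lemma \ref{lemm.5.3}, the set $G_U$ of equivalence classes of a.s. measurable selections of $U$ is an a.s. bounded, $\mathcal{T}_{\varepsilon,\lambda}$-closed, $L^0$-convex subset of $L^0(\mathcal{F},\mathbb{R}^d)$. Since $L^0(\mathcal{F},\mathbb{R}^d)$ is a $\mathcal{T}_{\varepsilon,\lambda}$-complete random normed module, every $\mathcal{T}_{\varepsilon,\lambda}$-closed and $L^0$-convex subset is automatically $\sigma$-stable, so $G_U$ is $\sigma$-stable as well.

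Next I would translate the two hypotheses imposed on the operator $h:G_U\to G_U$. By Lemma \ref{lemm.5.1}, a mapping $h:G_U\to G_U$ is local if and only if it is $\sigma$-stable; hence the locality assumption on $h$ is exactly the $\sigma$-stability required by Proposition \ref{prop.2.3}. Likewise, as recalled in the discussion preceding the statement, continuity in probability coincides with $\mathcal{T}_{\varepsilon,\lambda}$-continuity, so the continuous-in-probability assumption on $h$ is exactly the $\mathcal{T}_{\varepsilon,\lambda}$-continuity required by Proposition \ref{prop.2.3}.

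With these two translations in hand, all the hypotheses of Proposition \ref{prop.2.3} are met: $G_U$ is an a.s. bounded, $\mathcal{T}_{\varepsilon,\lambda}$-closed, $L^0$-convex subset of $L^0(\mathcal{F},\mathbb{R}^d)$, and $h$ is a $\sigma$-stable, $\mathcal{T}_{\varepsilon,\lambda}$-continuous self-map of $G_U$. Applying Proposition \ref{prop.2.3} then yields a fixed point of $h$, which completes the argument.

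I do not expect the fixed-point step itself to present any difficulty, since it is a pure invocation of Proposition \ref{prop.2.3}; the genuine content is pushed into the two translation lemmas. The verification that $G_U$ is $\mathcal{T}_{\varepsilon,\lambda}$-closed (Lemma \ref{lemm.5.3}) and that locality is equivalent to $\sigma$-stability (Lemma \ref{lemm.5.1}) both rest on measurable-selection arguments relating the pointwise (in $\omega$) behaviour of representatives to the module-theoretic structure; this is where one must be careful with null sets, with the measurability of the graph of $U$, and with the passage between a function-level statement and its equivalence class. Granting those two lemmas, the proposition follows immediately.
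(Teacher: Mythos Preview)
Your proposal is correct and matches the paper's own approach: the paper explicitly derives Proposition \ref{prop.2.4} as a special case of Proposition \ref{prop.2.3} via exactly the two translations you describe, citing Lemma \ref{lemm.5.3} for the properties of $G_U$ and Lemma \ref{lemm.5.1} for the equivalence of locality and $\sigma$-stability (see the discussion preceding Proposition \ref{prop.2.4} and the forward implication in the proof of Theorem \ref{thm.equivalent1}).
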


\par A very special case of Proposition \ref{prop.2.3} first appeared as Proposition 3.1 in \cite{DKKS2013}, which states that
every $\sigma$-stable a.s. sequentially continuous  mapping from an a.s. bounded a.s. sequentially closed  $L^{0}$-convex subset of 
$L^0(\mathcal{F},\mathbb{R}^d)$ to itself has a fixed point, where a subset is said to be a.s. sequentially closed if it contains all a.s. limits of sequences in it. Proposition \ref{prop.2.3} is a generalization of 
Proposition 3.1 of \cite{DKKS2013} since a.s. sequentially closed set is exactly $\mathcal{T}_{\varepsilon,\lambda}$-closed and 
a.s. sequential continuity obviously  implies $\mathcal{T}_{\varepsilon,\lambda}$-continuity. It is interesting to notice that Proposition \ref{prop.5.8} implies the equivalence of Proposition 3.1 of  \cite{DKKS2013} with either of Proposition \ref{prop.2.3} or Proposition \ref{prop.2.4}.

\par We would also like to point out that Proposition \ref{prop.2.2} (namely, Theorem \ref{thm.Brouwer}) and Proposition \ref{prop.2.3} are both best possible, which is explained as follows. It is well known that $L^0(\mathcal{F},\mathbb{R}^d)$ is a metrizable linear topological space under $\mathcal{T}_{\varepsilon,\lambda}$ (namely, the topology of convergence in probability). There is, naturally, the notion of boundedness in the sense of the linear topology $\mathcal{T}_{\varepsilon,\lambda}$, denoted by the $\mathcal{T}_{\varepsilon,\lambda}$-boundedness. It is easy to see that the a.s. boundedness implies the $\mathcal{T}_{\varepsilon,\lambda}$-boundedness, and we have known from Proposition 16 of \cite{Guo2013} that these two kinds of boundedness coincide for a $\sigma$-stable subset. Likewise, the $L^{0}$-convexity implies the usual convexity, and we have also known from Lemma 2.8 of \cite{SGT2025} that these two kinds of convexity coincide for a  $\mathcal{T}_{\varepsilon,\lambda}$-closed and $\sigma$-stable subset. On the one hand, an a.s. bounded $\mathcal{T}_{\varepsilon,\lambda}$-closed $L^{0}$-convex subset $G$ of $L^{0}(\mathcal{F},\mathbb{R}^{d})$ is generally noncompact. If the hypothesis on the $\sigma$-stability of $f$ in Proposition \ref{prop.2.3} is removed, then Proposition \ref{prop.2.3} does not hold, which comes from a classical result due to Dobrowolski and Marciszewski \cite{DM1997} (see also \cite[Section 2.1]{GWXYC2025}). On the other hand, an example provided by Ponosov in \cite{Ponosov2021} shows that Proposition \ref{prop.2.4} (namely, Proposition \ref{prop.2.3}) does not hold if $G_{U}$ is replaced with a generic $\mathcal{T}_{\varepsilon,\lambda}$-closed, $\mathcal{T}_{\varepsilon,\lambda}$-bounded and convex set $G$ (namely, $G$ is not $L^{0}$-convex). Therefore, they are all best possible.

\subsection{A general $L^{0}$-simplicial subdivision and random Sperner lemma}\label{sec.2.2}

\par In this paper, we present a general theory of $L^{0}$-simplicial subdivisions of $L^0$-simplexes and give a thorough treatment of a random Sperner lemma by utilizing the inherent connection between $L^0$-simplexes and classical simplexes. So we can smoothly present a good definition of an $L^{0}$-simplicial subdivision of an $L^0$-simplex (see Definition \ref{defn.$L^{0}$-simplicial subdivision} below) and establish a general random Sperner lemma (see Theorem \ref{thm.Sperner} below). To survey these main results, we first introduce some notation and prerequisites.

\par  In the sequel of this paper, a left module over the algebra $L^{0}(\mathcal{F},\mathbb{K})$ is simply said to be an $L^{0}$-module.

\par Since $L^{0}$-labeling functions take values in $L^{0}(\mathcal{F},\{1,2,\cdots,n\})$ (see Definition \ref{defn.labeling function} below), to remove any possible
ambiguities by strictly distinguishing a measurable set and its equivalence class, we employ the language from measure algebras. For this, we introduce the convention
from measure algebras as follows.

\par  For any $A, B\in \mathcal{F}$,  we will always use the corresponding lowercase letters $a$ and $b$  for the equivalence 
classes $[A]$ and $[B]$ (two elements $C$ and $D$ in $\mathcal{F}$ are said to be equivalent if $P[(C\setminus D)\cup(D\setminus C)]=0$), 
respectively. Define $1=[\Omega]$, $0=[\emptyset]$, $a\wedge b=[A\cap B]$, $a\vee b=[A\cup B] $ and $a^{c}=[A^{c}]$, 
where $A^c$ denotes the complement of $A$. Let $B_{\mathcal{F}}=\{a=[A]:A\in \mathcal{F}\}$, then $(B_{\mathcal{F}},\wedge,\vee,^{c},0,1)$ 
is a complete Boolean algebra, namely, a complete complemented distributive lattice (see \cite{Kop1989} for details). Specifically, $B_{\mathcal{F}}$ is called the measure algebra associated with $(\Omega,\mathcal{F},P)$.

\par As usual, for any $a,b\in B_{\mathcal{F}}$, $a>b$ means $a\geq b$ and $a\neq b$. 
A subset $\{a_{i}:i\in I\}$ of $B_{\mathcal{F}}$ is called a partition of unity if $\bigvee_{i\in I} a_{i} =1$ and $a_{i} \wedge a_{j} = 0$ 
for any $i,j\in I$ with $i\neq j$. The collection of all such partitions is denoted by $p(1)$. For any $\{a_{j}:j\in J\}\in p(1)$, 
it is clear that the set $\{j\in J : a_{j}>0\}$ is at most countable. Moreover, $\{a_{n},n\in \mathbb{N}\}\in p(1)$ iff  there exists 
a countable partition $\{A_n, n\in \mathbb{N} \}$ of $\Omega$ to $\mathcal{F}$ such that $a_{n}=[A_{n}]$ for each $n\in \mathbb{N}$.

\par From now on, for any $a=[A]\in B_{\mathcal{F}}$, we also use $I_{a}$ 
to denote $\tilde{I}_{A}$.  For a subset $H$ of a complete lattice (e.g., $B_{\mathcal{F}}$), we use $\bigvee H$ and 
$\bigwedge H$ to denote the supremum and infimum of $H$, respectively.

\par Recall that an  $L^{0}$-module $E$ is said to be regular if $E$ has the following property: for any given 
two elements $x$ and $y$ in $E$, if there exists $\{a_{n},n\in \mathbb{N}\}\in p(1)$ such that $I_{a_{n}}x=I_{a_{n}}y$ 
for each $n\in \mathbb{N}$, then $x=y$.

\par In the remainder of this paper, all the $L^{0}$-modules under consideration are assumed to be regular. 
This restriction is not excessive, since all random normed modules (for example, $L^{0}(\mathcal{F},\mathbb{R}^{d})$) are regular.

\par Since our aim is to develop random functional analysis and its applications to dynamic Nash equilibrium theory and nonsmooth differential geometry on metric 
measure spaces (see \cite{BPS2023,CLPV2025,CGP2025,Gigli2018,GLP2025,LP2019,LPV2024} for recent advances), random normed modules will play an active role and they are all more general $L^{0}$-modules than $L^{0}(\mathcal{F},\mathbb{R}^{d})$, and thus we state the results of this section in terms of $L^{0}$-modules for possible convenience in future applications.

\par In the special $L^{0}$-module $L^{0}(\mathcal{F},\mathbb{R}^{d})$, $\sum_{n=1}^{\infty}I_{a_{n}}x_{n}$ is always well defined for any sequence $\{x_{n},n\in \mathbb{N}\}$
 and any $\{a_{n},n\in \mathbb{N}\}\in p(1)$. However, for a generic $L^{0}$-module $E$, $\sum_{n=1}^{\infty}I_{a_{n}}x_{n}$ may be meaningless for an arbitrary sequence $\{x_{n},n\in \mathbb{N}\}$ in $E$ and arbitrary $\{a_{n},n\in \mathbb{N}\}\in p(1)$. For this, we need the notion of a $\sigma$-stable set introduced in \cite{Guo2010} (see also (4) of Definition \ref{defn.$L^{0}$-simplex} below) in order to speak of a countable concatenation such as $\sum_{n=1}^{\infty}I_{a_{n}}x_{n}$ in a meaningful way.

\begin{definition}[\cite{Guo2010,GWXYC2025}]\label{defn.$L^{0}$-simplex}
	Let $E$ be an $L^{0}$-module and $G$ be a nonempty subset of $E$.
	\begin{enumerate}
		\item [(1)] $G$ is said to be $L^{0}$-convex if $\lambda x+(1-\lambda) y\in G$ for any $x,y\in G$ and any $\lambda\in L_{+}^{0}(\mathcal{F})$ with $\lambda\leq 1$.
		Generally, for any subset $G$, 
		 $Conv_{L^0} (G):= \{ \sum^{n}_{i=1} \xi_{i} x_{i}: n\in \mathbb{N}, \{x_{1},x_{2},\cdots, x_{n}\}\subseteq G~\text{and}~\{\xi_{1},\xi_{2},\cdots,\xi_{n}\}\subseteq L^0_+(\mathcal{F})~\text{such that}~ \sum^{n}_{i=1} \xi_{i}=1\}$ is called the $L^0$-convex hull of $G$. Clearly, $G$ is $L^0$-convex if $Conv_{L^0}(G)=G$.
		
		\item [(2)] A finite subset $\{x_{1},x_{2},\cdots, x_{n}\}$ of $G$ 
		is said to be $L^0$-linearly independent if, for any $\{\xi_{1},\xi_{2},\cdots,\xi_{n}\} \subseteq L^0(\mathcal{F}, \mathbb{K})$ satisfying $\sum^{n}_{i=1} \xi_i x_i=0$, it always holds that 
		$\xi_i=0$ for each $i=1\sim n$. Further, $G$ is said to be $L^0$-linearly independent if any finite nonempty subset of $G$ is $L^0$-linearly independent, 
		in which case the $L^0$-submodule generated by $G$ is said to be $L^0$-free.
		
		\item [(3)] A finite subset $\{x_{1},x_{2},\cdots, x_{n} \}$ of $E$ is said to be $L^0$-affinely independent if 
		either $n=1$ or if $n>1$ and $\{x_i-x_1: i=2\sim n \}$ is $L^0$-linearly independent, 
		in which case $Conv_{L^0} (\{x_{1}, x_{2},\cdots, x_{n} \})$ is called an $(n-1)$-dimensional $L^0$-simplex with 
		vertices $x_{1}, x_{2},\cdots, x_n$ and $Conv_{L^{0}}(\{x_{i}:i\in I\})$ is called an $(|I|-1)$-dimensional $L^{0}$-face of 
		$Conv_{L^0} (\{x_{1}, x_{2},\cdots, x_{n} \})$ for any nonempty subset $I$ of $\{1,2,\cdots, n \}$, where $|I|$ denotes the cardinality of $I$. 
		
		\item [(4)] $G$ is said to be finitely stable if $I_{a}x+I_{a^{c}}y\in G$ for any $x,y\in G$ and any $a\in B_{\mathcal{F}}$. $G$ is said to be $\sigma$-stable
		(or to have the countable concatenation property in terms of \cite{Guo2010}) if, for each sequence $\{x_{n}, n\in \mathbb{N} \}$ in $G$ and each 
		$\{a_{n},n\in \mathbb{N}\}\in p(1)$, there exists some $x\in G$ such that $I_{a_{n}}x=I_{a_{n}}x_n$ 
		for each $n\in \mathbb{N}$. Such an $x$ is unique since $E$ is assumed to be regular, usually denoted by $ \sum_{n=1}^{\infty}I_{a_{n}}x_n$, 
		called the countable concatenation of $\{x_n,n\in\mathbb{N}\}$ along $\{a_n,n\in \mathbb{N}\}$. If $G$ is
		$\sigma$-stable and  $H$ is a nonempty subset of $G$, then $\sigma(H):=\{ \sum_{n=1}^{\infty}I_{a_{n}}h_n: \{h_n,n\in \mathbb{N}\}$
		is a sequence in $H$ and $\{a_{n},n\in \mathbb{N}\}\in p(1)\}$ is called the $\sigma$-stable hull of $H$.
	\end{enumerate}
\end{definition}

\par 
For any $(n-1)$-dimensional $L^0$-simplex $S= Conv_{L^0} (\{x_{1},x_{2},\cdots, x_{n}\})$ in an $L^{0}$-module $E$, 
as shown by Guo et al. \cite{GWXYC2025}, $S$ is $\sigma$-stable and
each element $x\in S$ can be uniquely represented as $x= \sum^n_{i=1} \xi_i x_i$, where
$\xi_i\in L^0_+(\mathcal{F})$ for any $i\in \{1,2,\cdots,n\}$ such that $ \sum^n_{i=1} \xi_i =1$. 
Moreover, since  $E$ is also a linear space over $\mathbb{K}$ and the $L^{0}$-affine independence of $\{x_{1},x_{2},\cdots, x_{n}\}$  
implies their affine independence,  then $S'= Conv(\{x_1, x_2,\cdots, x_n \})$ is a classical  
$(n-1)$-dimensional simplex in the linear space $E$, where $Conv(G)$ denotes the usual convex hull of a nonempty subset $G$ in a linear space.
In this case, we call $S'$  the associated simplex of $S$. Throughout this paper, whenever we refer to an $L^{0}$-simplex $S$, we will always denote its associated simplex by $S'$.

\par 
Recall from \cite{Bor1985} that for any $(n-1)$-dimensional simplex $S'=Conv(\{x_{1},x_{2},$ $\cdots,x_{n}\})$ in a linear space, a finite family  $\mathscr{S}'$  of $(n-1)$-dimensional 
simplexes is called a simplicial subdivision of $S'$ if
\begin{itemize}
	\item [(1)] $S'=\bigcup \mathscr{S}'$;
	\item [(2)] for any $S'_{1},S'_{2}\in \mathscr{S}'$, $S'_{1}\cap S'_{2}$ is either 
	empty or a common face of $S'_{1}$ and $S'_{2}$.
\end{itemize}

\par As a random generalization, we introduce the notion of an $L^{0}$-simplicial subdivision of an $L^0$-simplex 
in Definition \ref{defn.$L^{0}$-simplicial subdivision} below.

\begin{definition}\label{defn.$L^{0}$-simplicial subdivision}
	Let $S=Conv_{L^0}(\{x_{1}, x_{2},\cdots, x_{n}\})$ be an $(n-1)$-dimensional $L^0$-simplex in an $L^{0}$-module $E$.
	A finite family $\mathscr{S}$  of $(n-1)$-dimensional $L^0$-simplexes is called an $L^{0}$-simplicial subdivision of $S$
	if 
	\begin{enumerate} 
		\item [(1)] $S=\sigma\{\bigcup \mathscr{S}\}$;
		\item [(2)] for any $S_{1},S_{2}\in \mathscr{S}$, $S_{1}\cap S_{2}$ is either empty or a common $L^{0}$-face of $S_{1}$ and $S_{2}$;
		\item [(3)] $\mathscr{S}'=\{G':G\in \mathscr{S}\}$ is a simplicial subdivision of $S'$.
	\end{enumerate}
\end{definition}

\par Let $\mathscr{S}$ and $\mathscr{S}'$ be the same as in Definition \ref{defn.$L^{0}$-simplicial subdivision}, then we call $\mathscr{S}'$ the associated simplicial subdivision of $\mathscr{S}$. Throughout this paper, whenever we refer to an $L^{0}$-simplicial subdivision $\mathscr{S}$, we will always denote its associated simplicial subdivision by $\mathscr{S}'$. $L^{0}$-simplexes are much more complicated objects than usual simplexes, causing considerable challenges in our initial study of them. These challenges motivate us to introduce  Definition \ref{defn.$L^{0}$-simplicial subdivision} in connection with the classical simplicial subdivision  
so that we can make full use of the theory of classical simplicial subdivisions to give a  thorough treatment of $L^{0}$-simplicial subdivisions of $L^{0}$-simplexes.

\par 
The notion of $L^0$-extreme points was first introduced and studied by Guo et al. in \cite{GWXYC2025}. 
For two nonempty subsets $G$ and $H$ of an $L^{0}$-module (resp., a linear space)
such that $H \subseteq G$. $H$ is called an $L^0$-extreme subset (resp., an extreme subset) of $G$ if both 
$x$ and $y$ belong to $H$ whenever $x$ and $y$ belong to $G$ and there exists $\lambda\in L^0(\mathcal{F}, (0,1))$ 
(resp., $\lambda\in (0,1)$) such that $\lambda x+(1-\lambda)y\in H$. Further, if $H=\{h\}$ is a singleton and 
an $L^0$-extreme subset (resp., an extreme subset) of $G$, then $h$ is called an $L^0$-extreme point (resp., an extreme point) 
of $G$. We will always denote by $ext_{L^0} (G)$ (resp., $ext(G)$) the set of $L^0$-extreme points (resp., extreme points) 
of $G$. For any family $\mathscr{G}$ of nonempty subsets of an $L^{0}$-module (resp., a linear space), the set of $L^0$-extreme points (resp., extreme points) 
of $\mathscr{G}$ is defined by $ext_{L^{0}}(\mathscr{G}):=\sigma[\bigcup_{G\in \mathscr{G}}ext_{L^{0}}(G)]$ 
(resp., $ext(\mathscr{G}):=\bigcup_{G\in \mathscr{G}}ext(G)$).

\par For any $(n-1)$-dimensional simplex $S'=Conv(\{x_{1},x_{2},\cdots,x_{n}\})$ in a linear space and any 
simplicial subdivision $\mathscr{S}'$ of $S'$, a mapping $\psi: ext(\mathscr{S}')\rightarrow \{1,2,\cdots,n\}$
is called a proper labeling function of $\mathscr{S}'$ (see \cite{Bor1985}) if  
$$\psi(y)\in \chi(y), \forall y\in ext(\mathscr{S}'),$$
where $\chi(y):=\{i:\alpha_{i}>0\}$ for any $y=\sum_{i=1}^{n}\alpha_{i}x_{i}$.  In fact, the set of all proper labeling functions is exactly $\Pi_{y\in ext(\mathscr{S}')}\chi(y)$.

\par A simplex $G'=Conv(\{y_{1},y_{2},\cdots,y_{n}\})$  with $y_{j}\in ext(\mathscr{S}')$ for each $j\in \{1,2,\cdots,n\}$ is said to be completely labeled by $\psi$ if $\psi$ is a 
proper labeling function of $\mathscr{S}'$ and  
$$\{\psi(y_{j}):j=1\sim n\}=\{1,2,\cdots,n\}.$$

\par Let $E_{1}$ and $E_{2}$ be two $L^{0}$-modules, and let $G_{1}\subseteq E_{1}$ and $G_{2}\subseteq E_{2}$ be two
$\sigma$-stable sets. A mapping $f:G_{1}\rightarrow G_{2}$ is said to be $\sigma$-stable if  
$$f(\sum^{\infty}_{n=1} I_{a_{n}} x_{n}) = \sum^{\infty}_{n=1} I_{a_{n}} f(x_{n})$$
for any  $\{a_{n}, n\in \mathbb{N}\}\in p(1)$ and any sequence $\{x_{n}, n\in \mathbb{N} \}$ in $G_{1}$.

\par In \cite[Definition 2.1]{DKKS2013}, Drapeau et al. introduced the notion of a proper $L^{0}$-labeling function 
for the $L^{0}$-barycentric subdivision of an $L^{0}$-simplex in $L^{0}(\mathcal{F},\mathbb{R}^{d})$.  
Definition~\ref{defn.labeling function} below is a generalization of \cite[Definition 2.1]{DKKS2013}.
But we adopt a strengthened notion of a completely labeled $L^{0}$-simplex since such a notion is both easy to understand and
such a completely labeled $L^{0}$-simplex indeed exists (see Theorem \ref{thm.Sperner}). In fact, according to Definition~\ref{defn.labeling function},
$\phi(y_{j})$ is an equivalence class of a constant valued random variable for each $j\in \{1,2,\cdots,n\}$ for a completely labeled $L^{0}$-simplex $G=Conv_{L^{0}}(\{y_{1},y_{2},\cdots,y_{n}\})$,
and hence our notion of a completely labeled $L^{0}$-simplex is completely similar to the notion of a completely labeled simplex.

\begin{definition}\label{defn.labeling function}
	Let $S=Conv_{L^{0}}(\{x_{1},x_{2},\cdots,x_{n}\})$ be an $(n-1)$-dimensional $L^{0}$-simplex in 
	an $L^{0}$-module $E$, $\mathscr{S}$ an $L^{0}$-simplicial subdivision 
	of $S$ and $\phi:ext_{L^{0}}(\mathscr{S})\rightarrow L^{0}(\mathcal{F},\{1,2,\cdots,n\})$. The mapping  
	$\phi$ is called an $L^{0}$-labeling function of $\mathscr{S}$ if $\phi$ is $\sigma$-stable. An $L^{0}$-labeling 
	function $\phi$ of $\mathscr{S}$ is said to be proper if 
	$$(\phi(y)=i)\wedge (\lambda_{i}=0)=0~\text{for~any}~y\in ext_{L^{0}}(\mathscr{S})~\text{and~any}~i\in \{1,2,\cdots,n\},$$
	where $y=\sum_{i=1}^{n}\lambda_{i}x_{i}$. An $L^{0}$-simplex $G=Conv_{L^{0}}(\{y_{1},y_{2},\cdots,y_{n}\})$ with 
	$y_{j}\in ext_{L^{0}}(\mathscr{S})$ for each $j\in \{1,2,\cdots,n\}$ is said to be completely labeled by $\phi$ if 
	$\phi$ is a proper $L^{0}$-labeling function of $\mathscr{S}$ and  
    $$\{\phi(y_{j}):j=1\sim n\}=\{1,2,\cdots,n\}.$$
\end{definition}

\par  The classical Sperner  lemma, originally established by Sperner in \cite{Sperner1928} was formulated for simplexes in $\mathbb{R}^{n}$. Let $S'=Conv(\{x_{1},x_{2},\cdots,x_{n}\})$
be an $(n-1)$-dimensional simplex in a linear space $L$ and $L^{n}=Span(\{x_{1},x_{2},\cdots,x_{n}\})$ be the linear subspace of $L$ generated by $\{x_{1},x_{2},\cdots,x_{n}\}$. Then $L^{n}$
is at most $n$-dimensional and the classical Sperner lemma is only related to $L^{n}$ and is independent of $L$, so one can readily see the following version of Sperner 
lemma for simplexes in a general linear space.

\begin{lemma}\label{lemm.Sperner}
	Let $S'=Conv (\{x_{1},x_{2},\cdots,x_{n}\})$ be an $(n-1)$-dimensional simplex in 
	a linear space, $\mathscr{S}'$  a simplicial subdivision 
	of $S'$ and $\psi$ a proper labeling function of $\mathscr{S}'$. 
	Then there are an odd number of completely labeled simplexes in $\mathscr{S}'$ .
\end{lemma}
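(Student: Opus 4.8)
The plan is to recognize that the assertion is precisely the classical Sperner lemma, only stated in an abstract linear space rather than in Euclidean space, and therefore to reduce it to the Euclidean case via an affine identification that carries all the combinatorial and labeling data faithfully. First I would note that, since $S'=Conv(\{x_{1},x_{2},\cdots,x_{n}\})$ is $(n-1)$-dimensional, the vertexes $x_{1},x_{2},\cdots,x_{n}$ are affinely independent; consequently every simplex of $\mathscr{S}'$ and every point of $ext(\mathscr{S}')$ lies in the real affine hull $A:=\{\sum_{i=1}^{n}\alpha_{i}x_{i}:\alpha_{i}\in \mathbb{R},\ \sum_{i=1}^{n}\alpha_{i}=1\}$ of the vertexes. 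By affine independence, each $y\in A$ has unique barycentric coordinates $(\alpha_{1},\cdots,\alpha_{n})$, so the map $b:A\rightarrow H$ defined by $b(\sum_{i=1}^{n}\alpha_{i}x_{i})=(\alpha_{1},\cdots,\alpha_{n})$ is an affine bijection of $A$ onto the hyperplane $H:=\{\alpha\in \mathbb{R}^{n}:\sum_{i=1}^{n}\alpha_{i}=1\}$, sending $x_{i}$ to the $i$-th standard basis vector $e_{i}$ and hence $S'$ onto the standard simplex $\Delta:=Conv(\{e_{1},e_{2},\cdots,e_{n}\})$.

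Next I would verify that $b$ is a faithful dictionary between the two settings. Being a face of a simplex, being a common face of two simplexes, and being a simplicial subdivision are all affine-invariant notions, so $b(\mathscr{S}'):=\{b(G'):G'\in \mathscr{S}'\}$ is a simplicial subdivision of $\Delta$. Moreover, for $y=\sum_{i=1}^{n}\alpha_{i}x_{i}\in ext(\mathscr{S}')$ the carrier $\chi(y)=\{i:\alpha_{i}>0\}$ coincides with the set of nonzero coordinates of $b(y)$, i.e. with the carrier of $b(y)$ in $\Delta$; hence $\psi\circ b^{-1}$ is a proper labeling function of $b(\mathscr{S}')$ in the classical sense, and a simplex $G'\in \mathscr{S}'$ is completely labeled by $\psi$ if and only if $b(G')$ is completely labeled by $\psi\circ b^{-1}$, since the condition $\{\psi(y_{j}):j=1\sim n\}=\{1,2,\cdots,n\}$ is transported verbatim. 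In particular no metric or Euclidean structure of the ambient space intervenes — only the affine carriers matter.

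Finally I would invoke the classical Sperner lemma for the standard simplex $\Delta\subseteq \mathbb{R}^{n}$, for instance in the accessible graph-theoretic form of Kuhn \cite{Kuhn1968} (originally \cite{Sperner1928}), to conclude that $b(\mathscr{S}')$ contains an odd number of completely labeled members. Since $b$ is a bijection from $\mathscr{S}'$ onto $b(\mathscr{S}')$ and preserves complete labeling by the previous step, $\mathscr{S}'$ itself contains an odd number of completely labeled simplexes, which is the desired conclusion. I do not expect a genuine obstacle here: the deep combinatorial content is entirely absorbed into the classical lemma, which I treat as known. The only point that must be handled carefully is the invariance claim of the second paragraph — namely that \emph{simplicial subdivision}, \emph{common face}, \emph{barycentric carrier}, \emph{proper labeling} and \emph{completely labeled} are all preserved by the affine bijection $b$ — and this is exactly the observation that the classical Sperner lemma depends only on $L^{n}=Span(\{x_{1},x_{2},\cdots,x_{n}\})$ (equivalently on $A$) and not on the ambient linear space, as already remarked before the statement.
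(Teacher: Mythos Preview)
Your proposal is correct and follows exactly the route the paper indicates: the paper does not give a separate proof of this lemma but merely remarks, just before the statement, that since everything lives in the finite-dimensional span $L^{n}=Span(\{x_{1},\dots,x_{n}\})$ the classical Sperner lemma for Euclidean simplexes applies verbatim. Your argument is simply a careful spelling-out of this reduction via the barycentric affine identification with the standard simplex, and you even note the agreement in your final sentence.
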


\par For any given sequence $\{G_{n},n\in \mathbb{N}\}$ of nonempty subsets of a $\sigma$-stable set $G$ of an $L^{0}$-module and any given
$\{a_{n},n\in \mathbb{N}\}\in p(1)$, 
$$\sum_{n=1}^{\infty}I_{a_{n}}G_{n}:=\{\sum_{n=1}^{\infty}I_{a_{n}}x_{n}:x_{n}\in G_{n}~\text{for~each}~n\in \mathbb{N}\}$$
is called the  countable concatenation of $\{G_{n},n\in\mathbb{N}\}$ along $\{a_n,n\in \mathbb{N}\}$.

\par Theorem \ref{thm.Sperner} below can be aptly called a random Sperner lemma.

\begin{theorem}[Random Sperner Lemma]\label{thm.Sperner}
	Let $S=Conv_{L^{0}}(\{x_{1},x_{2},\cdots,x_{n}\})$ be an $(n-1)$-dimensional $L^{0}$-simplex in 
	an $L^{0}$-module $E$, $\mathscr{S}$  an $L^{0}$-simplicial subdivision 
	of $S$ and $\phi$ a proper $L^{0}$-labeling function of $\mathscr{S}$. 
	Then there exist a finite family $\{S_{m}:=Conv_{L^{0}}(\{y_{1}^{m},y_{2}^{m},\cdots,y_{n}^{m}\}),m=1\sim M\}$ 
	of $(n-1)$-dimensional $L^{0}$-simplexes in $\mathscr{S}$ and $\{a_{m},m=1\sim M\}\in p(1)$ such that 
	$$\sum_{m=1}^{M}I_{a_{m}}S_{m}=Conv_{L^{0}}(\{\sum_{m=1}^{M}I_{a_{m}}y_{1}^{m}, \sum_{m=1}^{M}I_{a_{m}}y_{2}^{m},\cdots, \sum_{m=1}^{M}I_{a_{m}}y_{n}^{m}\})$$ 
	is an $(n-1)$-dimensional $L^{0}$-simplex completely labeled by $\phi$ and satisfies that 
	$$\phi(\sum_{m=1}^{M}I_{a_{m}}y_{j}^{m})=j~\text{for~any}~j\in \{1,2,\cdots,n\}.$$
\end{theorem}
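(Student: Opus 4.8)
The plan is to reduce the random Sperner lemma to the classical Sperner lemma (Lemma \ref{lemm.Sperner}) via the representation theorem (Theorem \ref{thm.representation}), which expresses a proper $L^0$-labeling function $\phi$ of $\mathscr{S}$ in terms of usual proper labeling functions of the counterpart subdivision $\mathscr{S}'$. First I would invoke the representation theorem to obtain, for each $\omega$ (or more precisely on each piece of a suitable measurable partition), a usual proper labeling function $\psi$ of $\mathscr{S}'$ so that $\phi$ is assembled from these $\psi$'s by countable concatenation. Concretely, the $\sigma$-stability of $\phi$ together with the representation should give a finite or countable family of usual proper labeling functions $\{\psi_k\}$ and a partition of unity $\{b_k\} \in p(1)$ such that on each $b_k$ the $L^0$-labeling $\phi$ agrees with the (constant-on-$\Omega$) labeling induced by $\psi_k$ applied to the vertices of $\mathscr{S}'$. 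Since $ext(\mathscr{S}')$ is a finite set and each $\psi_k$ takes finitely many values, there are only finitely many distinct proper labeling functions of $\mathscr{S}'$, so this family can be taken finite.

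Next I would apply the classical Sperner lemma to each $\psi_k$: for each $k$ there is at least one (in fact an odd number of) completely labeled simplex $G'_k = Conv(\{y^k_1,\dots,y^k_n\}) \in \mathscr{S}'$ with $\psi_k(y^k_{\sigma_k(j)}) = j$ for some permutation, and after relabeling the vertices we may assume $\psi_k(y^k_j) = j$ for each $j$. The corresponding $L^0$-simplex $S_k = Conv_{L^0}(\{y^k_1,\dots,y^k_n\}) \in \mathscr{S}$ is then, on the piece $b_k$, completely labeled in the sense that $I_{b_k}\phi(y^k_j) = I_{b_k}\cdot j$. The key point is that the vertices $y^k_j$ lie in $ext_{L^0}(\mathscr{S})$, and the representation guarantees that the labeling condition $\phi(y) = \psi_k(y)$ on $b_k$ transfers the \emph{combinatorial} completeness of $G'_k$ under $\psi_k$ into the required $L^0$-completeness of $S_k$ on $b_k$.

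Then I would assemble the global completely labeled $L^0$-simplex by countable (here finite) concatenation. Setting $M$ to be the number of pieces, $a_m := b_m$, and $S_m$ as above, I form the concatenated vertices $z_j := \sum_{m=1}^{M} I_{a_m} y^m_j$ for each $j$, which lie in $ext_{L^0}(\mathscr{S})$ because this set is $\sigma$-stable by definition ($ext_{L^0}(\mathscr{S}) = \sigma[\bigcup ext_{L^0}(G)]$). I then verify that $Conv_{L^0}(\{z_1,\dots,z_n\}) = \sum_{m=1}^M I_{a_m} S_m$ is genuinely an $(n-1)$-dimensional $L^0$-simplex, i.e.\ that $\{z_1,\dots,z_n\}$ is $L^0$-affinely independent; this follows because on each $a_m$ the $z_j$ restrict to the $L^0$-affinely independent $y^m_j$, and $L^0$-affine independence is a local (stable) property that can be checked on each piece of a partition of unity. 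Finally, using $\sigma$-stability of $\phi$, I compute $\phi(z_j) = \phi(\sum_m I_{a_m} y^m_j) = \sum_m I_{a_m} \phi(y^m_j) = \sum_m I_{a_m}\cdot j = j$ for each $j$, which gives the required complete labeling and the displayed identity $\phi(\sum_m I_{a_m} y^m_j) = j$.

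The main obstacle I expect is twofold. The first and most essential difficulty is already packaged into the representation theorem (Theorem \ref{thm.representation}): proving that an abstract $\sigma$-stable proper $L^0$-labeling function can be written as a concatenation of \emph{finitely many} genuine proper labeling functions of the fixed classical subdivision $\mathscr{S}'$; this is exactly where the carefully chosen Definition \ref{defn.$L^{0}$-simplicial subdivision}, linking $\mathscr{S}$ to $\mathscr{S}'$, does the heavy lifting, and I would lean on it as a black box. The second, more technical, obstacle is verifying that the concatenated object $Conv_{L^0}(\{z_1,\dots,z_n\})$ is still an honest $(n-1)$-dimensional $L^0$-simplex lying in (the $\sigma$-stable hull captured by) $\mathscr{S}$ and that the equality with $\sum_m I_{a_m} S_m$ holds; here the regularity assumption on $E$ and the $\sigma$-stability of $S$ and of $ext_{L^0}(\mathscr{S})$ are what make the gluing well defined and the affine-independence check reducible to each piece $a_m$. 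Everything else is bookkeeping that follows from $\sigma$-stability of $\phi$ and the finiteness of the relevant index sets.
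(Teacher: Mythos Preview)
Your proposal is correct and follows essentially the same approach as the paper: invoke the representation theorem (Theorem \ref{thm.representation}(3)) to write $\phi$ on $ext(\mathscr{S}')$ as a finite concatenation $\sum_m I_{a_m}\psi_m$ of usual proper labeling functions, apply the classical Sperner lemma to each $\psi_m$ to obtain a completely labeled $S'_m \in \mathscr{S}'$ with vertices ordered so that $\psi_m(y_j^m)=j$, and then concatenate along $\{a_m\}$, using Lemma \ref{lemm.$L^{0}$-simplicial subdivision}(1) for the $L^0$-affine independence of the glued vertices and the $\sigma$-stability of $\phi$ for the final computation $\phi(\sum_m I_{a_m} y_j^m)=j$. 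The only cosmetic difference is that the paper obtains finiteness of the family $\{\psi_m\}$ directly from the statement of Theorem \ref{thm.representation}(3), whereas you argue it via the finiteness of the set of all proper labeling functions of $\mathscr{S}'$; both are fine.
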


\par 
In Theorem \ref{thm.Sperner},  $\sum_{m=1}^{M}I_{a_{m}}S_{m}$ is an $(n-1)$-dimensional $L^{0}$-simplex contained in $S$ such that 
each vertex of $\sum_{m=1}^{M}I_{a_{m}}S_{m}$ belongs to $ext_{L^{0}}(\mathscr{S})$ since both $S$ and $ext_{L^{0}}(\mathscr{S})$ are $\sigma$-stable.
Besides, $\sum_{m=1}^{M}I_{a_{m}}S_{m}$ is not necessarily in $\mathscr{S}$ but merely a finite concatenation of the finite subfamily $\{S_{m}:m=1\sim M\}$ of $\mathscr{S}$, which is a significant difference between Theorem \ref{thm.Sperner} and the classical Sperner lemma ({namely, Lemma \ref{lemm.Sperner}).

\section{Proof of Theorem \ref{thm.Sperner}: Random Sperner Lemma} \label{sec.3}

\par We first give the following two examples --- Examples \ref{ex.$L^{0}$-barycentric subdivision} and \ref{ex.1}. The first one shows
the $L^{0}$-barycentric subdivision introduced in \cite[Definition 1.10]{DKKS2013}  is a special case of our Definition \ref{defn.$L^{0}$-simplicial subdivision}.
The second one shows that our Definition \ref{defn.$L^{0}$-simplicial subdivision} surpasses the $L^{0}$-barycentric subdivision.

\begin{example}\label{ex.$L^{0}$-barycentric subdivision}
	Let $S=Conv_{L^{0}}(\{x_{1},x_{2},\cdots,x_{n}\})$ be an $(n-1)$-dimensional $L^{0}$-simplex in 
	an $L^{0}$-module $E$ and $G(n)$ be the group of permutations of $\{1,2,\cdots,n\}$. For any $\pi\in G(n)$, define
	\begin{align}
		&y_{k}^{\pi}=\frac{1}{k}\sum_{i=1}^{k}x_{\pi(i)}, \forall k\in \{1,2,\cdots,n\},\nonumber\\
		&S_{\pi}=Conv_{L^{0}}(y_{1}^{\pi},y_{2}^{\pi},\cdots,y_{n}^{\pi}).\nonumber
	\end{align}
	Then, by applying the same method as in \cite[Lemma 1.11]{DKKS2013}, one can verify that $\{S_{\pi}:\pi\in G(n)\}$ is 
	an $L^{0}$-simplicial subdivision of $S$, which is called the $L^{0}$-barycentric subdivision of $S$. 
\end{example}

\begin{example}\label{ex.1}
	Let $S=Conv_{L^{0}}(\{x_{1},x_{2},x_{3}\})$ be a $2$-dimensional $L^{0}$-simplex in an $L^{0}$-module $E$. Let $y_{1,2}=\frac{1}{2}(x_{1}+x_{2})$, 
	$y_{1,3}=\frac{1}{2}(x_{1}+x_{3})$ and  $y_{2,3}=\frac{1}{2}(x_{2}+x_{3})$, it is easy to check that  
	$$\mathscr{S}=\left\{
	\begin{array}{l}
		\begin{array}{l}
			Conv_{L^{0}}(\{x_{1},y_{1,2},y_{1,3}\}),Conv_{L^{0}}(\{x_{2},y_{1,2},y_{2,3}\}),\\
			Conv_{L^{0}}(\{x_{3},y_{1,3},y_{2,3}\}),Conv_{L^{0}}(\{y_{1,2},y_{1,3},y_{2,3}\})
		\end{array}
	\end{array}
	\right\}$$
	is a family of $2$-dimensional $L^{0}$-simplexes. 
	
	\par (1) On the one hand, for any $x=\lambda_{1}x_{1}+\lambda_{2}x_{2}+\lambda_{3}x_{3}$ in $S$, where 
		$\lambda_{1},\lambda_{2},\lambda_{3}\in L_{+}^{0}(\mathcal{F}) $ and $\lambda_{1}+\lambda_{2}+\lambda_{3}=1$, 
		let
		$$
		\begin{array}{ll}
			b_{1}=(\lambda_{1}\geq \frac{1}{2}),&\quad b_{2}=(\lambda_{2}\geq \frac{1}{2}),\\
			b_{3}=(\lambda_{3}\geq \frac{1}{2}),&\quad b_{4}=(\lambda_{1}< \frac{1}{2})\wedge (\lambda_{2}< \frac{1}{2})\wedge (\lambda_{3}< \frac{1}{2})
		\end{array}
		$$
		and further let 
		$$
		\begin{array}{ll}
			a_{1}=b_{1},&\quad a_{2}=b_{2}\wedge b_{1}^{c},\\
			a_{3}=b_{3}\wedge (b_{1}\vee b_{2})^{c},&\quad a_{4}=b_{4},
		\end{array}
		$$
		then $\{a_{1},a_{2},a_{3},a_{4}\}\in p(1)$ and we have
		\begin{align}
			x&=\lambda_{1}x_{1}+\lambda_{2}x_{2}+\lambda_{3}x_{3}\nonumber\\
			&=I_{a_{1}}[(\lambda_{1}-\lambda_{2}-\lambda_{3})x_{1}+2\lambda_{2}y_{1,2}+2\lambda_{3}y_{1,3}]+\nonumber\\
			&\quad I_{a_{2}}[(\lambda_{2}-\lambda_{1}-\lambda_{3})x_{2}+2\lambda_{1}y_{1,2}+2\lambda_{3}y_{2,3}]+\nonumber\\
			&\quad I_{a_{3}}[(\lambda_{3}-\lambda_{1}-\lambda_{2})x_{3}+2\lambda_{1}y_{1,3}+2\lambda_{2}y_{2,3}]+\nonumber\\
			&\quad I_{a_{4}}[(\lambda_{1}+\lambda_{2}-\lambda_{3})y_{1,2}+(\lambda_{1}+\lambda_{3}-\lambda_{2})y_{1,3}+(\lambda_{2}+\lambda_{3}-\lambda_{1})y_{2,3}]\nonumber\\
			& \in I_{a_{1}} Conv_{L^{0}}(\{x_{1},y_{1,2},y_{1,3}\})+I_{a_{2}} Conv_{L^{0}}(\{x_{2},y_{1,2},y_{2,3}\})+\nonumber\\
			&\quad I_{a_{3}} Conv_{L^{0}}(\{x_{3},y_{1,3},y_{2,3}\})+I_{a_{4}} Conv_{L^{0}}(\{y_{1,2},y_{1,3},y_{2,3}\})\nonumber\\
			&\subseteq \sigma\{\bigcup \mathscr{S}\},\nonumber
		\end{align}
		implying $S\subseteq\sigma\{\bigcup \mathscr{S}\}$. On the other hand, since $S$ is both $\sigma$-stable and $L^{0}$-convex, 
		it follows that $\sigma\{\bigcup \mathscr{S}\} \subseteq S$. Therefore, $S=\sigma\{\bigcup \mathscr{S}\}$.
 
    \par (2) For any $z\in Conv_{L^{0}}(\{x_{1},y_{1,2},y_{1,3}\})\cap Conv_{L^{0}}(\{y_{1,2},y_{1,3},y_{2,3}\})$, let 
    $$z=\xi_{1}x_{1}+\xi_{2}y_{1,2}+\xi_{3}y_{1,3}=\eta_{1}y_{2,3}+\eta_{2}y_{1,3}+\eta_{3}y_{1,2},$$
    where $\{\xi_{1},\xi_{2},\xi_{3}\}\subseteq L_{+}^{0}(\mathcal{F})$, $\{\eta_{1},\eta_{2},\eta_{3}\}\subseteq L_{+}^{0}(\mathcal{F})$ and 
    $$\xi_{1}+\xi_{2}+\xi_{3}=\eta_{1}+\eta_{2}+\eta_{3}=1,$$
    we have 
    $$z=(\xi_{1}+\frac{\xi_{2}}{2}+\frac{\xi_{3}}{2})x_{1}+\frac{\xi_{2}}{2}x_{2}+\frac{\xi_{3}}{2}x_{3}=\frac{\eta_{2}+\eta_{3}}{2}x_{1}+ \frac{\eta_{1}+\eta_{3}}{2}x_{2}+\frac{\eta_{1}+\eta_{2}}{2}x_{3}.$$
    Since $\{x_{1},x_{2},x_{3}\}$ is $L^{0}$-affinely independent,  
    $$
    \begin{cases}
    	\begin{aligned}
    		\xi_{1}+\frac{\xi_{2}}{2}+\frac{\xi_{3}}{2}&=\frac{\eta_{2}+\eta_{3}}{2}\\
    		\frac{\xi_{2}}{2}&=\frac{\eta_{1}+\eta_{3}}{2}\\
    		\frac{\xi_{3}}{2}&=\frac{\eta_{1}+\eta_{2}}{2}
    	\end{aligned}
    \end{cases},
    $$
    then  
    $$
    \begin{cases}
    	\begin{aligned}
    		\xi_{1}&=\eta_{1}=0\\
    		\xi_{2}&=\eta_{3}\\
    		\xi_{3}&=\eta_{2}
    	\end{aligned}
    \end{cases},
    $$
    which implies that $y\in Conv_{L^{0}}(\{y_{1,2},y_{1,3}\})$, and hence 
    $$Conv_{L^{0}}(\{x_{1},y_{1,2},y_{1,3}\})\cap Conv_{L^{0}}(\{y_{1,2},y_{1,3},y_{2,3}\})=Conv_{L^{0}}(\{y_{1,2},y_{1,3}\}).$$
    Similarly, one can prove that 
    $$Conv_{L^{0}}(\{x_{2},y_{1,2},y_{2,3}\})\cap Conv_{L^{0}}(\{y_{1,2},y_{1,3},y_{2,3}\})=Conv_{L^{0}}(\{y_{1,2},y_{2,3}\}),$$
    $$Conv_{L^{0}}(\{x_{3},y_{1,3},y_{2,3}\})\cap Conv_{L^{0}}(\{y_{1,2},y_{1,3},y_{2,3}\})=Conv_{L^{0}}(\{y_{1,3},y_{2,3}\}),$$
    $$Conv_{L^{0}}(\{x_{1},y_{1,2},y_{1,3}\})\cap Conv_{L^{0}}(\{x_{2},y_{1,2},y_{2,3}\})=\{y_{1,2}\},$$
    $$Conv_{L^{0}}(\{x_{1},y_{1,2},y_{1,3}\})\cap Conv_{L^{0}}(\{x_{3},y_{1,3},y_{2,3}\})=\{y_{1,3}\}$$
    and 
    $$Conv_{L^{0}}(\{x_{2},y_{1,2},y_{2,3}\})\cap Conv_{L^{0}}(\{x_{3},y_{1,3},y_{2,3}\})=\{y_{2,3}\}.$$

    Therefore,  $\mathscr{S}$ satisfies (2) of Definition \ref{defn.$L^{0}$-simplicial subdivision}.
    
    \par (3) Similar to the argument presented in (2), one can readily verify that
	$$\mathscr{S}'=\left\{
	\begin{array}{l}
		\begin{array}{l}
			Conv(\{x_{1},y_{1,2},y_{1,3}\}),Conv(\{x_{2},y_{1,2},y_{2,3}\}),\\
			Conv(\{x_{3},y_{1,3},y_{2,3}\}),Conv(\{y_{1,2},y_{1,3},y_{2,3}\})
		\end{array}
	\end{array}
	\right\}$$
	is a simplicial subdivision of $S'$.
	
	\par To sum up, $\mathscr{S}$ is an $L^{0}$-simplicial subdivision of $S$.
\end{example}

\par To prove Theorem \ref{thm.Sperner}, we need the following Lemma \ref{lemm.$L^{0}$-simplicial subdivision}, Lemma \ref{lemm.$L^{0}$-simplicial subdivision connection} 
and Theorem \ref{thm.representation}. Although part (2) of Lemma \ref{lemm.$L^{0}$-simplicial subdivision} is almost clear since each $S_{i}$
is $\sigma$-stable, we still give its proof here for the reader's convenience.

\begin{lemma}\label{lemm.$L^{0}$-simplicial subdivision}
	Let $S=Conv_{L^0} (\{x_{1}, x_{2},\cdots, x_{n} \})$ be an $(n-1)$-dimensional $L^0$-simplex in an $L^{0}$-module $E$. Then we have the following properties:
	\begin{enumerate} 
		\item [(1)] For any sequence $\{S^{m}=Conv_{L^{0}}(\{x_{1}^{m},x_{2}^{m},\cdots,x_{n}^{m}\}),m\in \mathbb{N}\}$ of 
		$(n-1)$-dimensional $L^{0}$-simplexes contained in $S$ and any $\{a_{m},m\in \mathbb{N}\}\in p(1)$, $\sum_{m=1}^{\infty}I_{a_{m}}S^{m}$
		is an $(n-1)$-dimensional $L^{0}$-simplex contained in $S$ with vertexes $ \sum_{m=1}^{\infty}I_{a_{m}}x_{1}^{m}, \sum_{m=1}^{\infty}I_{a_{m}}x_{2}^{m},\cdots,$  $\sum_{m=1}^{\infty}I_{a_{m}}x_{n}^{m}$.
		\item [(2)] For any $L^{0}$-simplicial subdivision $\mathscr{S}=\{S_{i},i=1\sim I\}$ of $S$, 
		$$S=\bigcup\{ \sum_{i=1}^{I}I_{b_{i}}S_{i}:\{b_{i},i=1\sim I\}\in p(1)\}.$$
	\end{enumerate}
\end{lemma}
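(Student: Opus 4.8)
The plan is to treat the two statements separately, deriving (2) from the representation obtained in (1). For (1), I would first observe that each candidate vertex $v_{j}:=\sum_{m=1}^{\infty}I_{a_{m}}x_{j}^{m}$ is well defined and lies in $S$: since every $S^{m}\subseteq S$ and $S$ is $\sigma$-stable, the countable concatenation $v_{j}$ exists in $S$. The substantive point is that $\{v_{1},v_{2},\cdots,v_{n}\}$ is $L^{0}$-affinely independent, i.e. that $\{v_{i}-v_{1}:i=2\sim n\}$ is $L^{0}$-linearly independent. Here I would exploit the orthogonality of the idempotents $I_{a_{m}}$: from $I_{a_{m}}I_{a_{k}}=I_{a_{m}\wedge a_{k}}$ and $a_{m}\wedge a_{k}=0$ for $m\neq k$ one gets $I_{a_{m}}v_{i}=I_{a_{m}}x_{i}^{m}$, hence $I_{a_{m}}(v_{i}-v_{1})=I_{a_{m}}(x_{i}^{m}-x_{1}^{m})$. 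Thus, given any relation $\sum_{i=2}^{n}\xi_{i}(v_{i}-v_{1})=0$ with $\xi_{i}\in L^{0}(\mathcal{F},\mathbb{K})$, multiplying through by $I_{a_{m}}$ yields $\sum_{i=2}^{n}(I_{a_{m}}\xi_{i})(x_{i}^{m}-x_{1}^{m})=0$. Because $S^{m}$ is an $(n-1)$-dimensional $L^{0}$-simplex, $\{x_{i}^{m}-x_{1}^{m}:i=2\sim n\}$ is $L^{0}$-linearly independent, forcing $I_{a_{m}}\xi_{i}=0$ for every $m$; summing over the partition then gives $\xi_{i}=\sum_{m=1}^{\infty}I_{a_{m}}\xi_{i}=0$, which is the required independence.

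It remains in (1) to identify $\sum_{m}I_{a_{m}}S^{m}$ with $Conv_{L^{0}}(\{v_{1},\cdots,v_{n}\})$. For the inclusion $\subseteq$, I would take $z=\sum_{m}I_{a_{m}}z^{m}$ with $z^{m}=\sum_{j}\lambda_{j}^{m}x_{j}^{m}\in S^{m}$, set $\mu_{j}:=\sum_{m}I_{a_{m}}\lambda_{j}^{m}\in L^{0}_{+}(\mathcal{F})$, and verify by the same idempotent bookkeeping that $\sum_{j}\mu_{j}=1$ and $\sum_{j}\mu_{j}v_{j}=z$. The reverse inclusion is the mirror computation: a point $\sum_{j}\mu_{j}v_{j}$ rewrites as $\sum_{m}I_{a_{m}}\big(\sum_{j}\mu_{j}x_{j}^{m}\big)$, each inner sum lying in $S^{m}$. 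Containment of the whole object in $S$ is then immediate from the $\sigma$-stability of $S$ together with $S^{m}\subseteq S$.

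For (2), the easy inclusion $\bigcup\{\sum_{i}I_{b_{i}}S_{i}\}\subseteq S$ follows because each $S_{i}\subseteq\bigcup\mathscr{S}\subseteq\sigma\{\bigcup\mathscr{S}\}=S$ by condition (1) of Definition \ref{defn.$L^{0}$-simplicial subdivision}, and $S$ being $\sigma$-stable absorbs every finite concatenation of selections from the $S_{i}$. For the nontrivial inclusion $S\subseteq\bigcup\{\cdots\}$, I would again invoke $S=\sigma\{\bigcup\mathscr{S}\}$: any $x\in S$ is a countable concatenation $x=\sum_{k=1}^{\infty}I_{c_{k}}h_{k}$ with $\{c_{k}\}\in p(1)$ and $h_{k}\in\bigcup\mathscr{S}$. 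Choosing for each $k$ an index $i(k)$ with $h_{k}\in S_{i(k)}$ and setting $b_{i}:=\bigvee\{c_{k}:i(k)=i\}$, the finiteness of $\mathscr{S}$ guarantees $\{b_{i}:i=1\sim I\}\in p(1)$ (the $b_{i}$ are pairwise disjoint with join $1$, some possibly $0$). Fixing any $h_{i}^{0}\in S_{i}$ and putting $z_{i}:=I_{b_{i}^{c}}h_{i}^{0}+\sum_{k:i(k)=i}I_{c_{k}}h_{k}\in S_{i}$, legitimate by $\sigma$-stability of $S_{i}$ along the partition $\{b_{i}^{c}\}\cup\{c_{k}:i(k)=i\}$, one checks $I_{b_{i}}z_{i}=\sum_{k:i(k)=i}I_{c_{k}}h_{k}=I_{b_{i}}x$ for every $i$, whence $x=\sum_{i}I_{b_{i}}z_{i}\in\sum_{i}I_{b_{i}}S_{i}$ by regularity.

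I expect the main obstacle to be the affine-independence step in (1): the rest is idempotent bookkeeping, but the independence genuinely requires collapsing an $L^{0}$-linear relation among the concatenated vertices onto each single simplex $S^{m}$ by multiplication with $I_{a_{m}}$ and then reassembling the conclusion through regularity. In (2) the only real point is the passage from a countable concatenation to a finite one, which is exactly where the finiteness of the subdivision $\mathscr{S}$ enters; everything else there is, as the remark preceding the lemma notes, essentially routine once $\sigma$-stability is in hand.
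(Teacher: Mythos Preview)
Your proposal is correct and follows essentially the same approach as the paper: the affine-independence step in (1) is done by multiplying a relation through by $I_{a_{m}}$ and invoking the independence inside each $S^{m}$ (the paper uses the equivalent ``$\sum\mu_i=0$, $\sum\mu_i v_i=0$'' formulation rather than your ``$\sum\xi_i(v_i-v_1)=0$'' version), and the set equality and part (2) are handled by the same idempotent bookkeeping and the same passage from a countable to a finite concatenation via grouping indices by which $S_i$ each piece lands in. Your use of an explicit choice function $k\mapsto i(k)$ in (2) is in fact slightly cleaner than the paper's $N_i=\{n:x_n\in S_i\}$, since the latter sets can overlap when some $x_n$ lies in more than one $S_i$.
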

\begin{proof}
	(1) First, we prove that 
	$$\sum_{m=1}^{\infty}I_{a_{m}}S^{m}=Conv_{L^{0}}(\{\sum_{m=1}^{\infty}I_{a_{m}}x_{1}^{m}, \sum_{m=1}^{\infty}I_{a_{m}}x_{2}^{m},\cdots, \sum_{m=1}^{\infty}I_{a_{m}}x_{n}^{m}\}).$$
	For any $\{\lambda_{i},i=1\sim n\}\subseteq L_{+}^{0}(\mathcal{F})$ with $\sum_{i=1}^{n}\lambda_{i}=1$, we have 
	$$\sum_{i=1}^{n}\lambda_{i}(\sum_{m=1}^{\infty}I_{a_{m}}x_{i}^{m})=\sum_{m=1}^{\infty}I_{a_{m}}(\sum_{i=1}^{n}\lambda_{i}x_{i}^{m})\in \sum_{m=1}^{\infty}I_{a_{m}}S^{m},$$
	which implies that 
	$$Conv_{L^{0}}(\{\sum_{m=1}^{\infty}I_{a_{m}}x_{1}^{m}, \sum_{m=1}^{\infty}I_{a_{m}}x_{2}^{m},\cdots, \sum_{m=1}^{\infty}I_{a_{m}}x_{n}^{m}\})\subseteq\sum_{m=1}^{\infty}I_{a_{m}}S^{m}.$$
	On the other hand, let $\sum_{i=1}^{n}\lambda_{i}^{m}x_{i}^{m}\in S^{m}$ for each $m\in \mathbb{N}$, where  
	$\{\lambda_{i}^{m},i=1\sim n\}\subseteq L_{+}^{0}(\mathcal{F})$ and $\sum_{i=1}^{n}\lambda_{i}^{m}=1$, 
	since $\{\sum_{m=1}^{\infty}I_{a_{m}}\lambda_{i}^{m},i=1\sim n\}\subseteq L_{+}^{0}(\mathcal{F})$  and 
	$$\sum_{i=1}^{n}(\sum_{m=1}^{\infty}I_{a_{m}}\lambda_{i}^{m})=\sum_{m=1}^{\infty}I_{a_{m}}(\sum_{i=1}^{n}\lambda_{i}^{m})=1,$$
    we have
	\begin{align}
		\sum_{m=1}^{\infty}I_{a_{m}}(\sum_{i=1}^{n}\lambda_{i}^{m}x_{i}^{m})&=\sum_{i=1}^{n}(\sum_{m=1}^{\infty}I_{a_{m}}\lambda_{i}^{m}x_{i}^{m})\nonumber\\
		&=\sum_{i=1}^{n}(\sum_{m=1}^{\infty}I_{a_{m}}\lambda_{i}^{m})(\sum_{m=1}^{\infty}I_{a_{m}}x_{i}^{m})\nonumber\\
		&\in Conv_{L^{0}}(\{\sum_{m=1}^{\infty}I_{a_{m}}x_{1}^{m}, \sum_{m=1}^{\infty}I_{a_{m}}x_{2}^{m},\cdots, \sum_{m=1}^{\infty}I_{a_{m}}x_{n}^{m}\}),\nonumber
	\end{align}
	giving
	$$\sum_{m=1}^{\infty}I_{a_{m}}S^{m}\subseteq Conv_{L^{0}}(\{\sum_{m=1}^{\infty}I_{a_{m}}x_{1}^{m}, \sum_{m=1}^{\infty}I_{a_{m}}x_{2}^{m},\cdots, \sum_{m=1}^{\infty}I_{a_{m}}x_{n}^{m}\}).$$

	\par Second, we prove that $\{\sum_{m=1}^{\infty}I_{a_{m}}x_{1}^{m}, \sum_{m=1}^{\infty}I_{a_{m}}x_{2}^{m},\cdots, \sum_{m=1}^{\infty}I_{a_{m}}x_{n}^{m}\}$ is $L^0$-affinely independent. 
	Let $\{\mu_{i},i=1\sim n\}\subseteq L^0(\mathcal{F},\mathbb{K})$ with $ \sum_{i=1}^{n}\mu_{i}=0$ and 
	$$\sum_{i=1}^{n}\mu_{i}(\sum_{m=1}^{\infty}I_{a_{m}}x_{i}^{m})=0.$$
	For any $m\in \mathbb{N}$, since 
	$$\sum_{i=1}^{n}(I_{a_{m}}\mu_{i})=I_{a_{m}}(\sum_{i=1}^{n}\mu_{i})=0,$$
	$$\sum_{i=1}^{n}(I_{a_{m}}\mu_{i})x_{i}^{m}=I_{a_{m}}[\sum_{i=1}^{n}\mu_{i}(\sum_{m=1}^{\infty}I_{a_{m}}x_{i}^{m})]=0$$
	and $S^{m}$ is an $(n-1)$-dimensional $L^{0}$-simplex, we have
	$$I_{a_{m}}\mu_{i}=0, \forall i\in \{1,2,\cdots,n\}.$$
	It follows from the arbitrariness of $m$ that 
	$$\mu_{i}=0, \forall i\in \{1,2,\cdots,n\}.$$

	\par (2) Since $S$ is $\sigma$-stable, for any $\{b_{i},i=1\sim I\}\in p(1)$, we have 
	$$\sum_{i=1}^{I}I_{b_{i}}S_{i}\subseteq \sum_{i=1}^{I}I_{b_{i}}(\bigcup_{i\in I}S_{i})\subseteq \sum_{i=1}^{I}I_{b_{i}}S=S,$$ 
	implying
	$$\bigcup\{\sum_{i=1}^{I}I_{b_{i}}S_{i}:\{b_{i},i=1\sim I\}\in p(1)\}\subseteq S.$$
	
	\par On the other hand, for any $x\in S$, since $S=\sigma(\bigcup_{i=1}^{I}S_{i})$, there exist $\{a_{n},n\in \mathbb{N}\}\in p(1)$ and a sequence 
	$\{x_{n},n\in \mathbb{N}\}$ in $\bigcup_{i=1}^{I}S_{i}$ such that $x=\sum_{n=1}^{\infty}I_{a_{n}}x_{n}$.  
	Let 
	$$N_{i}=\{n\in \mathbb{N}:x_{n}\in S_{i}\},\forall i\in\{1,2,\cdots,I\}.$$
	For each $i\in\{1,2,\cdots,I\}$, since $S_{i}$ is $\sigma$-stable, arbitrarily fixing a $z_{i}\in S_{i}$, then there exists $y_{i}\in S_{i}$ such that 
	$I_{a_{n}}x_{n}=I_{a_{n}}y_{i}$ for any $n\in N_{i}$ and $I_{(\bigvee_{n\in N_{i}}a_{n})^{c}}z_{i}=I_{(\bigvee_{n\in N_{i}}a_{n})^{c}}y_{i}$.
	Let 
	$$b_{i}=\bigvee_{n\in N_{i}}a_{n}, \forall i\in\{1,2,\cdots,I\}.$$ 
	It is clear that $\{b_{i},i=1\sim I\}\in p(1)$ and 
	\begin{align}
		x&=\sum_{n=1}^{\infty}I_{a_{n}}x_{n}\nonumber\\
		&=\sum_{i=1}^{I}I_{b_{i}}y_{i}\nonumber\\
		&\in \sum_{i=1}^{I}I_{b_{i}} S_{i}\nonumber\\
		&\subseteq \bigcup\{\sum_{i=1}^{I}I_{b_{i}}S_{i}:\{b_{i},i=1\sim I\}\in p(1)\}.\nonumber
	\end{align}
\end{proof}

\par As a direct application of Lemma 5.4 of \cite{GWXYC2025}, Lemma \ref{lemm.$L^{0}$-simplicial subdivision connection} below gives an interesting connection between the set of $L^{0}$-extreme points of an $L^{0}$-simplicial subdivision and the set of extreme points of its associated simplicial subdivision.

\begin{lemma}\label{lemm.$L^{0}$-simplicial subdivision connection}
	Let $E$ and $S$ be the same as in Lemma \ref{lemm.$L^{0}$-simplicial subdivision}, and $\mathscr{S}=\{S_{i},i=1\sim I\}$ be 
	an $L^{0}$-simplicial subdivision  of $S$. Then we have the following relations:
	\begin{enumerate}
		\item [(1)] $ext_{L^0}(S)=\sigma(ext(S'))$.
		\item [(2)] $ext_{L^{0}}(\mathscr{S})=\sigma(ext(\mathscr{S}'))$.
	\end{enumerate}
\end{lemma}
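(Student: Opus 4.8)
The plan is to reduce both statements to Lemma 5.4 of \cite{GWXYC2025} together with the elementary algebraic properties of the $\sigma$-stable hull operator $\sigma(\cdot)$, which I would recall or verify at the outset: that $\sigma$ is \emph{extensive} ($H\subseteq\sigma(H)$, by using the trivial partition $\{1\}\in p(1)$), \emph{monotone} ($H_{1}\subseteq H_{2}$ implies $\sigma(H_{1})\subseteq\sigma(H_{2})$), and \emph{idempotent} ($\sigma(\sigma(H))=\sigma(H)$). Since $S$ is $\sigma$-stable and every set in sight lies inside $S$, all these hulls are well defined.

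For statement (1), I would simply invoke Lemma 5.4 of \cite{GWXYC2025}, which identifies the $L^{0}$-extreme points of a finitely generated $L^{0}$-convex set with the $\sigma$-stable hull of the extreme points of its counterpart. Since $S=Conv_{L^{0}}(\{x_{1},\ldots,x_{n}\})$ is of exactly this form and its counterpart is the classical simplex $S'=Conv(\{x_{1},\ldots,x_{n}\})$, the lemma yields $ext_{L^{0}}(S)=\sigma(ext(S'))$ at once. Concretely, since the extreme points of an $(n-1)$-dimensional simplex are precisely its vertices, this reads $ext_{L^{0}}(S)=\sigma(\{x_{1},\ldots,x_{n}\})$, although the explicit description will not be needed below.

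For statement (2), I would unwind the two definitions. By definition $ext_{L^{0}}(\mathscr{S})=\sigma[\bigcup_{i=1}^{I}ext_{L^{0}}(S_{i})]$ and $ext(\mathscr{S}')=\bigcup_{i=1}^{I}ext(S_{i}')$, where $\mathscr{S}'=\{S_{i}':i=1\sim I\}$. Applying statement (1) to each $L^{0}$-simplex $S_{i}\in\mathscr{S}$ gives $ext_{L^{0}}(S_{i})=\sigma(ext(S_{i}'))$, so that
$$ext_{L^{0}}(\mathscr{S})=\sigma\Big[\bigcup_{i=1}^{I}\sigma(ext(S_{i}'))\Big].$$
It then remains only to prove the absorption identity $\sigma[\bigcup_{i}\sigma(H_{i})]=\sigma[\bigcup_{i}H_{i}]$ for subsets $H_{i}$ of $S$; taking $H_{i}=ext(S_{i}')$ turns the right-hand side into $\sigma[\bigcup_{i}ext(S_{i}')]=\sigma(ext(\mathscr{S}'))$, finishing the argument.

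The absorption identity is the only genuinely non-citational step, and I expect it to be the main (though still routine) obstacle. The inclusion $\supseteq$ is immediate from extensivity and monotonicity. For $\subseteq$, each $H_{i}\subseteq\bigcup_{j}H_{j}$ gives $\sigma(H_{i})\subseteq\sigma(\bigcup_{j}H_{j})$ by monotonicity, hence $\bigcup_{i}\sigma(H_{i})\subseteq\sigma(\bigcup_{j}H_{j})$; applying $\sigma$ once more and invoking idempotency yields $\sigma[\bigcup_{i}\sigma(H_{i})]\subseteq\sigma(\bigcup_{j}H_{j})$. The crux buried inside idempotency is that a countable concatenation of countable concatenations is again a single countable concatenation, which I would establish by passing to the common refinement $\{a_{m}\wedge b_{m,k}\}_{m,k}\in p(1)$ of the partitions involved and using the regularity of $E$ to guarantee uniqueness of the resulting element. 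This is where all the care is needed, but it is standard for $\sigma$-stable sets.
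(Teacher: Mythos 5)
Your proposal is correct and follows essentially the same route as the paper: statement (1) is obtained from Lemma 5.4 of \cite{GWXYC2025} together with the fact that $ext(S')=\{x_{1},x_{2},\cdots,x_{n}\}$, and statement (2) follows by applying (1) to each $S_{i}$ and collapsing $\sigma\bigl[\bigcup_{i}\sigma(ext(S_{i}'))\bigr]$ to $\sigma\bigl[\bigcup_{i}ext(S_{i}')\bigr]$. The only difference is presentational: the paper performs this last absorption step silently within a chain of equalities, whereas you isolate it as an explicit identity and justify idempotency via refinement of partitions, which is a faithful elaboration rather than a different argument.
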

\begin{proof}
	(1) By Lemma 5.4 of \cite{GWXYC2025}, we have 
	$$ext_{L^0}(S)=\sigma(\{x_{1},x_{2},\cdots, x_{n}\}),$$
	which, combined with the fact $ext(S')=\{x_{1}, x_{2},\cdots, x_{n}\}$, 
	implies that 
	$$ext_{L^0}(S)=\sigma(ext(S')).$$
	
	\par (2) Since $ext_{L^{0}}(S_{i})= \sigma(ext(S'_{i}))$ for each $i\in \{1,2,\cdots,I\}$, we have 
	\begin{align}
		ext_{L^{0}}(\mathscr{S})&=\sigma(\bigcup_{i=1}^{I} ext_{L^{0}}(S_{i}))\nonumber\\
		&=\sigma(\bigcup_{i=1}^{I} \sigma(ext(S'_{i})))\nonumber\\
		&=\sigma(\bigcup_{i=1}^{I} ext(S'_{i}))\nonumber\\
		&=\sigma(ext(\mathscr{S}')).\nonumber
	\end{align}
\end{proof}

\par For any $\xi=\sum_{i=1}^{n}I_{a_{i}} i\in L^{0}(\mathcal{F},\{1,2,\cdots,n\})$,  in such an expression of $\xi$, from now on we always assume that $a_{i}=(\xi=i)$ for each $i\in \{1,2,\cdots,n\}$ (then it is clear that $\{a_{i},i=1\sim n\}\in p(1)$). We will also always use $\mathcal{N}_{\xi}$ for the set $\{i:a_{i}>0\}$.

\par For any $\{a_{n},n\in \mathbb{N}\},~\{b_{m},m\in \mathbb{N}\}\in p(1)$, it is easy to check that $\{a_{n}\wedge b_{m},n,m\in \mathbb{N}\} \in p(1)$, 
which is referred to as the refinement of $\{a_{n},n\in \mathbb{N}\}$ and $\{b_{m},m\in \mathbb{N}\}$. The refinement of any finite collection of partitions can be defined inductively.

\par Theorem \ref{thm.representation} below establishes a key connection between the set of proper $L^{0}$-labeling functions of $\mathscr{S}$
and the set of proper labeling functions of $\mathscr{S'}$. In particular, part (3) of Theorem \ref{thm.representation} below gives the representation theorem of a proper $L^{0}$-labeling function by usual proper labeling functions, which considerably simplifies the proof of Theorem \ref{thm.Sperner}.

\begin{theorem}\label{thm.representation}
	Let $S=Conv_{L^{0}}(\{x_{1},x_{2},\cdots,x_{n}\})$ be an $(n-1)$-dimensional $L^{0}$-simplex in an $L^{0}$-module $E$ and  
	$\mathscr{S}$ be an $L^{0}$-simplicial subdivision of $S$. Then we have the following statements:
	\begin{enumerate} 
		\item [(1)] For any proper $L^{0}$-labeling function $\phi$ of $\mathscr{S}$, 
		$$\mathcal{N}_{\phi(y)}\subseteq  \chi(y), \forall y\in ext(\mathscr{S}').$$
		
		\item [(2)] For any proper $L^{0}$-labeling function $\phi$ of $\mathscr{S}$ and any $y\in ext(\mathscr{S}')$, there exists 
		a family $\{\psi_{i}:i\in \mathcal{N}_{\phi(y)}\}$ of proper labeling functions of $\mathscr{S'}$ with $\psi_{i}(y)=i$ for each $i\in \mathcal{N}_{\phi(y)}$, namely,
		$$\phi(y)=\sum_{i\in \mathcal{N}_{\phi(y)}}I_{(\phi(y)=i)}\psi_{i}(y).$$
		
		\item [(3)] Denote $ext(\mathscr{S}')=\{y_{k}:k=1\sim K\}$ and let $\phi$ be a proper $L^{0}$-labeling function of $\mathscr{S}$. Then 
		there exist a finite family $\{\psi_{m},m=1\sim M\}$ of proper labeling functions of $\mathscr{S}'$ and $\{a_{m},m=1\sim M\}\in p(1)$ such that 
		$$\phi(y_{k})=\sum_{m=1}^{M}I_{a_{m}}\psi_{m}(y_{k})~\text{for~any}~k\in \{1,2,\cdots,K\}.$$
		More generally, since $\phi$ is $\sigma$-stable, we also have
		$$\phi(y)=\sum_{k=1}^{K}I_{b_{k}} \sum_{m=1}^{M}I_{a_{m}}\psi_{m}(y_{k})$$
		for any $y=\sum_{k=1}^{K}I_{b_{k}}y_{k}\in ext_{L^{0}}(\mathscr{S})$ with $\{b_{k},k=1\sim K\}\in p(1)$.
	\end{enumerate}	
\end{theorem}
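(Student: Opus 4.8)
The plan is to prove the three statements in sequence, using the crucial fact that $\phi$ is $\sigma$-stable together with the structure of proper $L^0$-labeling functions.

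\textbf{Statement (1).} First I would unwind the definition. Fix $y \in ext(\mathscr{S}')$ and write $y = \sum_{i=1}^n \lambda_i x_i$ with $\lambda_i \in L^0_+(\mathcal{F})$, $\sum_i \lambda_i = 1$. By Lemma \ref{lemm.$L^{0}$-simplicial subdivision connection}(2) we have $ext(\mathscr{S}') \subseteq ext_{L^0}(\mathscr{S})$, so $\phi(y)$ is defined and the properness condition applies: $(\phi(y) = i) \wedge (\lambda_i = 0) = 0$ for each $i$. Now suppose $i \in \mathcal{N}_{\phi(y)}$, i.e. $a_i := (\phi(y) = i) > 0$. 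The properness condition forces $a_i \wedge (\lambda_i = 0) = 0$, which means $\lambda_i > 0$ on (a set of full measure within) the support of $a_i$, hence $i \in \chi(y) = \{i : \lambda_i > 0\}$. This establishes $\mathcal{N}_{\phi(y)} \subseteq \chi(y)$. The one subtlety to handle carefully is that $y$ here is an \emph{extreme point of the counterpart simplicial subdivision}, a genuine (deterministic) element, so its barycentric coordinates $\lambda_i$ are in $L^0$ but the inclusion $\mathcal{N}_{\phi(y)} \subseteq \chi(y)$ is a relation between index sets, which I read off pointwise after choosing representatives.

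\textbf{Statement (2).} Here I would, for the fixed $y \in ext(\mathscr{S}')$, construct for each $i \in \mathcal{N}_{\phi(y)}$ a genuine proper labeling function $\psi_i$ of $\mathscr{S}'$ with $\psi_i(y) = i$. Since by (1) each such $i$ lies in $\chi(y)$, the assignment $\psi_i(y) = i$ is admissible at the point $y$. For every other $z \in ext(\mathscr{S}')$ I simply pick \emph{any} value in $\chi(z)$ (nonempty because the barycentric coordinates sum to $1$), which makes $\psi_i$ a legitimate element of $\Pi_{z \in ext(\mathscr{S}')} \chi(z)$, i.e. a proper labeling function. The displayed identity $\phi(y) = \sum_{i \in \mathcal{N}_{\phi(y)}} I_{(\phi(y)=i)} \psi_i(y)$ then holds because $\psi_i(y) = i$ and $\{(\phi(y) = i) : i \in \mathcal{N}_{\phi(y)}\}$ partitions unity (after discarding the null pieces), so the right side reconstitutes the $L^0$-valued quantity $\phi(y) = \sum_i I_{(\phi(y)=i)} \cdot i$.

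\textbf{Statement (3).} This is the main obstacle and where the work concentrates. The difficulty is that (2) produces, for \emph{each} vertex $y_k$ separately, a family of proper labeling functions, but I need a \emph{single finite} family $\{\psi_m\}_{m=1}^M$ together with one partition $\{a_m\}_{m=1}^M \in p(1)$ that simultaneously represents $\phi$ at all $K$ vertices. My plan is to apply (2) at each $y_k$ to get functions $\{\psi_i^{(k)} : i \in \mathcal{N}_{\phi(y_k)}\}$ with $\psi_i^{(k)}(y_k) = i$, then take the product construction: enumerate all ``selection functions'' that, at each vertex $y_k$, choose one of the finitely many labels in $\chi(y_k)$. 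Since $ext(\mathscr{S}')$ is finite and each $\chi(y_k)$ is finite, there are only finitely many proper labeling functions of $\mathscr{S}'$ altogether; let $\{\psi_m : m=1 \sim M\}$ be the complete (finite) list. For the partition, I would set $a_m = \bigwedge_{k=1}^{K} (\phi(y_k) = \psi_m(y_k))$, the event on which $\phi$ agrees with $\psi_m$ at every vertex simultaneously. The key verification is that $\{a_m : m = 1 \sim M\} \in p(1)$: the $a_m$ are pairwise disjoint because distinct $\psi_m, \psi_{m'}$ differ at some vertex $y_k$, forcing $(\phi(y_k)=\psi_m(y_k)) \wedge (\phi(y_k) = \psi_{m'}(y_k)) = 0$; and they cover unity because for (almost) every $\omega$ the values $\phi(y_1)(\omega), \ldots, \phi(y_K)(\omega)$ specify exactly one proper labeling function by (1) (each value lies in the corresponding $\chi$), which is some $\psi_m$. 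On $a_m$ we then have $\phi(y_k) = \psi_m(y_k)$ for every $k$, yielding $\phi(y_k) = \sum_{m=1}^M I_{a_m} \psi_m(y_k)$. Finally the ``more generally'' clause follows immediately by applying $\sigma$-stability of $\phi$ to the representation $y = \sum_k I_{b_k} y_k$ and substituting the vertex-wise formula just proved.

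I expect the delicate point throughout to be the bookkeeping between equivalence-class-valued objects (the $L^0$-labeling $\phi$ and its events $(\phi(y_k)=i)$ in the measure algebra $B_{\mathcal{F}}$) and the genuinely finite combinatorial data of the counterpart subdivision $\mathscr{S}'$; the finiteness of $ext(\mathscr{S}')$ and of each $\chi(y_k)$ is precisely what guarantees the representing family $\{\psi_m\}$ is finite, which is essential for the conclusion to have the stated form.
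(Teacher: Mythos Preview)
Your proof is correct and follows essentially the same approach as the paper for statements (1) and (2). For statement (3), the underlying idea is identical to the paper's---partition $\Omega$ according to the joint values of $\phi(y_1),\ldots,\phi(y_K)$ and assign to each piece the corresponding classical labeling function---but your packaging is more direct: you enumerate all proper labeling functions $\psi_m$ of $\mathscr{S}'$ upfront and set $a_m=\bigwedge_{k}(\phi(y_k)=\psi_m(y_k))$, whereas the paper first forms the refinement $\{a^1_{i_1}\wedge\cdots\wedge a^K_{i_K}\}$ of the partitions $\{(\phi(y_k)=i)\}_i$ and then builds, for each non-null piece, a proper labeling function via an auxiliary construction using (2). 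Your route avoids the intermediate indexing apparatus ($I_{k,l}$, $p_{k,l}$, $\varphi_{k,l}$, $\psi^k_{(i_1,\ldots,i_K)}$) at the cost of possibly listing some $\psi_m$ with $a_m=0$, which is harmless since partitions of unity in $p(1)$ may contain null elements; the paper's route produces only the labeling functions that actually occur.
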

\begin{proof}
	(1) For any given $y\in ext(\mathscr{S}')\subseteq S'$, there exists $\{\alpha_{i},i=1\sim n\}\subseteq [0,1]$ with $\sum_{i=1}^{n}\alpha_{i}=1$ such that 
	$$y=\sum_{i=1}^{n} \alpha_{i}x_{i}\in S.$$
	For any $i\in \mathcal{N}_{\phi(y)}$,  if $\alpha_{i}=0$, then $(\alpha_{i}=0)=1$, which, combined with  
	$$(\phi(y)=i)\wedge (\alpha_{i}=0)=0,$$
	implies that $(\phi(y)=i)=0$. This contradicts the fact that $i\in \mathcal{N}_{\phi(y)}$.  Hence, $\alpha_{i}>0$ and this implies 
	that $i\in \chi(y)$. Thus, 
	$$\mathcal{N}_{\phi(y)}\subseteq  \chi(y).$$

	\par (2) For each $i\in \mathcal{N}_{\phi(y)}$, since $\mathcal{N}_{\phi(y)}\subseteq  \chi(y)$, it is clear that 
	there exists a proper labeling function $\psi_{i}$ of $\mathscr{S'}$ such that $\psi_{i}(y)=i$, namely,
	$$\phi(y)=\sum_{i\in \mathcal{N}_{\phi(y)}}I_{(\phi(y)=i)}\psi_{i}(y).$$
	
	\par (3) For each $k\in \{1,2,\cdots,K\}$, let $y_{k}=\sum_{i=1}^{n}\lambda_{i}^{k}x_{i}$, 
	where $\{\lambda_{i}^{k},i=1\sim n\}\subseteq [0,1]$ are such that $\sum_{i=1}^{n}\lambda_{i}^{k}=1$.
	Since $\phi$ maps $ext_{L^{0}}(\mathscr{S})$ into $L^{0}(\mathcal{F},\{1,2,\cdots,n\})$, 
	for each $k\in \{1,2,\cdots,K\}$, there exists $\{a_{i}^{k},i=1\sim n\}\in p(1)$ such that  
	$$\phi(y_{k})=\sum_{i=1}^{n}I_{a_{i}^{k}}i.$$
	Now, we consider the refinement 
	$$\{ a_{i_{1}}^{1}\wedge a_{i_{2}}^{2}\wedge \cdots\wedge a_{i_{K}}^{K}:  (i_{1}, i_{2},\cdots, i_{K})\in \{1,2,\cdots,n\}^{K}\}$$
	of $\{a_{i}^{1},i=1\sim n\},\cdots,\{a_{i}^{K-1},i=1\sim n\}$ and $\{a_{i}^{K},i=1\sim n\}$, where $\{1,2,\cdots,n\}^{K}$ is the $K$-th Cartesian power set of $\{1,2,\cdots,n\}$. 
	Further, let 
	$$I=\{(i_{1}, i_{2},\cdots, i_{K})\in \{1,2,\cdots,n\}^{K}: a_{i_{1}}^{1}\wedge a_{i_{2}}^{2}\wedge \cdots\wedge a_{i_{K}}^{K}>0\}.$$
	Then  
	$$\{ a_{i_{1}}^{1}\wedge a_{i_{2}}^{2}\wedge \cdots\wedge a_{i_{K}}^{K}: (i_{1}, i_{2},\cdots, i_{K})\in I\}\in p(1)$$
	and 
	$$\mathcal{N}_{\phi(y_{k})}=\{P_{k}(i_{1}, i_{2},\cdots, i_{K}):(i_{1}, i_{2},\cdots, i_{K})\in I\}, \forall k\in \{1,2,\cdots,K\},$$ 
	where $P_{k}(\cdot)$ denotes the projection of $\{1,2,\cdots,n\}^{K}$ onto the $k$-th coordinate.
	
	\par 
	For each $k\in \{1,2,\cdots,K\}$, let 
	$$\mathcal{N}_{\phi(y_{k})}(\text{namely,}~ P_{k}(I))=\{p_{k,l}:1\leq l\leq M_{k}\},$$
	where $M_{k}$ denotes the cardinality of $P_{k}(I)$. Further, for each $l\in \{1,2,\cdots,M_{k}\}$, let 
	$$I_{k,l}=\{(i_{1}, i_{2},\cdots, i_{K})\in I:P_{k}(i_{1}, i_{2},\cdots, i_{K})=p_{k,l}\},$$
	that is to say, $I_{k,l}=P_{k}^{-1}(p_{k,l})$. Then it is clear that $I=\bigcup_{l=1}^{M_{k}}I_{k,l}$ and $\{I_{k,l}:1\leq l\leq M_{k}\}$ is a pairwise disjoint family. 
	Now, by (2) as has been proved, there exists a family $\{\varphi_{k,l}:1\leq l\leq M_{k}\}$ of proper labeling functions of $\mathscr{S}'$ such that $\varphi_{k,l}(y_{k})=p_{k,l}$ for each $l\in \{1,2,\cdots M_{k}\}$, namely,
	$$\phi(y_{k})=\sum_{l=1}^{M_{k}}I_{(\phi(y_{k})=p_{k,l})}p_{k,l}=\sum_{l=1}^{M_{k}}I_{(\phi(y_{k})=p_{k,l})}\varphi_{k,l}(y_{k}).$$

	\par    
	For each  $k\in \{1,2,\cdots,K\}$ and each $l\in \{1,2,\cdots,M_{k}\}$, we define a proper labeling function $\psi_{(i_{1}, i_{2},\cdots, i_{K})}^{k}$ of $\mathscr{S}'$ for each $(i_{1}, i_{2},\cdots, i_{K})\in I_{k,l} $ by $\psi_{(i_{1}, i_{2},\cdots, i_{K})}^{k}=\varphi_{k,l}$. Then one can easily see that 
	$$\psi_{(i_{1}, i_{2},\cdots, i_{K})}^{k}(y_{k})=\varphi_{k,l}(y_{k})=p_{k,l}.$$
	To sum up, since $I=\bigcup_{l=1}^{M_{k}}I_{k,l}$, we can eventually obtain a family 
	$$\{\psi_{(i_{1}, i_{2},\cdots, i_{K})}^{k}:k\in \{1,2,\cdots,K\}~\text{and}~(i_{1}, i_{2},\cdots, i_{K})\in I\}$$
	of proper labeling functions of $\mathscr{S}'$ such that each $\psi_{(i_{1}, i_{2},\cdots, i_{K})}^{k}$ in this family satisfies that 
	$$\psi^{k}_{(i_{1}, i_{2},\cdots, i_{K})}(y_{k})=P_{k}(i_{1}, i_{2},\cdots, i_{K}).$$

	\par 
	Since, for each $k\in \{1,2,\cdots,K\}$ and each $l\in \{1,2,\cdots,M_{k}\}$,  
	$$(\phi(y_{k})=p_{k,l})=\bigvee_{(i_{1}, i_{2},\cdots, i_{K})\in I_{k,l}} a_{i_{1}}^{1}\wedge a_{i_{2}}^{2}\wedge \cdots\wedge a_{i_{K}}^{K},$$
	then 
	$$I_{(\phi(y_{k})=p_{k,l})}=\sum_{(i_{1}, i_{2},\cdots, i_{K})\in I_{k,l}}I_{ a_{i_{1}}^{1}\wedge a_{i_{2}}^{2}\wedge \cdots\wedge a_{i_{K}}^{K}},$$
	and hence 
	\begin{align}
		\phi(y_{k})&=\sum_{l=1}^{M_{k}}I_{(\phi(y_{k})=p_{k,l})}p_{k,l}\nonumber\\
		&=\sum_{l=1}^{M_{k}}\sum_{(i_{1}, i_{2},\cdots, i_{K})\in I_{k,l}}I_{ a_{i_{1}}^{1}\wedge a_{i_{2}}^{2}\wedge \cdots\wedge a_{i_{K}}^{K}}p_{k,l}\nonumber\\
		&=\sum_{l=1}^{M_{k}}\sum_{(i_{1}, i_{2},\cdots, i_{K})\in I_{k,l}}I_{ a_{i_{1}}^{1}\wedge a_{i_{2}}^{2}\wedge \cdots\wedge a_{i_{K}}^{K}}\psi_{(i_{1}, i_{2},\cdots, i_{K})}^{k}(y_{k})\nonumber\\
		&=\sum_{(i_{1}, i_{2},\cdots, i_{K})\in I}I_{ a_{i_{1}}^{1}\wedge a_{i_{2}}^{2}\wedge \cdots\wedge a_{i_{K}}^{K}}\psi_{(i_{1}, i_{2},\cdots, i_{K})}^{k}(y_{k}).\nonumber	
	\end{align}

	\par 
	Now, for each  $(i_{1}, i_{2},\cdots, i_{K})\in I$, we define a mapping $\psi_{(i_{1}, i_{2},\cdots, i_{K})}:ext(\mathscr{S'})\rightarrow \{1,2,\cdots,n\}$ by
	$$\psi_{(i_{1}, i_{2},\cdots, i_{K})}(y_{k})=\psi^{k}_{(i_{1}, i_{2},\cdots, i_{K})}(y_{k}), \forall k\in \{1,2,\cdots,K\}. $$
	Then  $\psi_{(i_{1}, i_{2},\cdots, i_{K})}$ is a proper labeling function of $\mathscr{S}'$.  It follows that
	$$\phi(y_{k})=\sum_{(i_{1}, i_{2},\cdots, i_{K})\in I}I_{ a_{i_{1}}^{1}\wedge a_{i_{2}}^{2}\wedge \cdots\wedge a_{i_{K}}^{K}} \psi_{(i_{1}, i_{2},\cdots, i_{K})}(y_{k}), \forall k\in \{1,2,\cdots,K\}.$$
	Furthermore, for any $y=\sum_{k=1}^{K}I_{b_{k}}y_{k}\in ext_{L^{0}}(\mathscr{S})$ with $\{b_{k},k=1\sim K\}\in p(1)$, we have 
	\begin{align}
		\phi(\sum_{k=1}^{K}I_{b_{k}}y_{k})&=\sum_{k=1}^{K}I_{b_{k}}\phi(y_{k})\nonumber\\
		&=\sum_{k=1}^{K}I_{b_{k}}(\sum_{(i_{1}, i_{2},\cdots, i_{K})\in I}I_{ a_{i_{1}}^{1}\wedge a_{i_{2}}^{2}\wedge \cdots\wedge a_{i_{K}}^{K}} \psi_{(i_{1}, i_{2},\cdots, i_{K})}(y_{k})).\nonumber
	\end{align}
\end{proof}

\par Lemma \ref{lemm.$L^{0}$-simplicial subdivision}, Lemma \ref{lemm.$L^{0}$-simplicial subdivision connection}  and Theorem \ref{thm.representation} have shown that $L^{0}$-simplexes, $L^{0}$-simplicial subdivisions and  proper $L^{0}$-labeling functions are closely connected to their classical counterparts. These results motivate us to make full use of Lemma \ref{lemm.Sperner} to complete the proof of Theorem \ref{thm.Sperner}.

\par We are now in a position to prove Theorem \ref{thm.Sperner}.

\begin{proof}[Proof of Theorem \ref{thm.Sperner}]
	Let $ext(\mathscr{S}')=\{y_{k}:k=1\sim K\}$ and $y_{k}=\sum_{i=1}^{n}\lambda_{i}^{k}x_{i}$ 
	with $\{\lambda_{i}^{k},i=1\sim n\}\subseteq [0,1]$ and $ \sum_{i=1}^{n}\lambda_{i}^{k}=1$ for any $k\in \{1,2,\cdots,K\}$.  By (3) of Theorem \ref{thm.representation},  
	there exist a finite family $\{\psi_{m},m=1\sim M\}$ of proper 
	labeling functions of $\mathscr{S}'$ and $\{a_{m},m=1\sim M\}\in p(1)$ such that 
	$$\phi(y_{k})=\sum_{m=1}^{M}I_{a_{m}}\psi_{m}(y_{k}),~\forall k\in \{1,2,\dots, K\}.$$

	\par 
	For each $\psi_{m}$, by  Lemma \ref{lemm.Sperner}, there exists $S'_{m}\in \mathscr{S}'$ such that $S'_{m}$ is completely labeled by $\psi_{m}$. 
	Let $S'_{m}=Conv(\{y_{1}^{m},\cdots,y_{n}^{m}\})$, where $y_{j}^{m}\in ext(\mathscr{S}')$ and  
	\begin{equation}\label{eq.thm.Sperner}
		\psi_{m}(y_{j}^{m})=j, \forall j\in \{1,2,\cdots,n\}.
	\end{equation}
	Further, let $S_{m}=Conv_{L^{0}}(\{y_{1}^{m},\cdots,y_{n}^{m}\})$ for each $m\in \{1,2,\cdots,M\}$. Then by part (1) of Lemma \ref{lemm.$L^{0}$-simplicial subdivision},
	$$\sum_{m=1}^{M}I_{a_{m}}S_{m}=Conv_{L^{0}}(\{\sum_{m=1}^{M}I_{a_{m}}y_{1}^{m},\sum_{m=1}^{M}I_{a_{m}}y_{2}^{m},\cdots,\sum_{m=1}^{M}I_{a_{m}}y_{n}^{m}\})$$
	is an $(n-1)$-dimensional $L^{0}$-simplex contained in $S$. Furthermore, by (\ref{eq.thm.Sperner}), we have 
	\begin{align}
		\phi(\sum_{m=1}^{M}I_{a_{m}}y_{j}^{m})&=\sum_{m=1}^{M}I_{a_{m}}\phi(y_{j}^{m})\nonumber\\
		&=\sum_{m=1}^{M}I_{a_{m}} (\sum_{l=1}^{M}I_{a_{l}}\psi_{l}(y_{j}^{m}))\nonumber\\
		&=\sum_{m=1}^{M}I_{a_{m}}\psi_{m}(y_{j}^{m})\nonumber\\
		&=\sum_{m=1}^{M}I_{a_{m}}j\nonumber\\
		&=j\nonumber
	\end{align}
	for any  $j\in \{1,2,\cdots,n\}$. This implies that $\sum_{m=1}^{M}I_{a_{m}}S_{m}$ is completely labeled by $\phi$.
\end{proof}

\section{Proof of Theorem \ref{thm.Brouwer}: The Random Brouwer Fixed Point Theorem}\label{sec.4}

\par 
Although Theorem \ref{thm.Brouwer} is stated in $L^{0}(\mathcal{F},\mathbb{R}^{d})$, several key concepts 
that will be used in its proof originate from the theory of random normed modules. For clarity and convenience, 
we begin this section by recalling some basic notions from the theory of random normed modules, and we also restate several concepts 
previously introduced in Section~\ref{sec.2} within the framework of random normed modules.

\par Definition \ref{defn. RN module} here is adopted from \cite{Guo1992,Guo1993} by following the traditional nomenclature of random metric spaces and random normed spaces (see \cite[Chapters 9 and
15]{SS2005}).

\begin{definition}[\cite{Guo1992,Guo1993}]\label{defn. RN module}
	An ordered pair $(E, \|\cdot\|)$ is called a random normed module (briefly, an $RN$ module) 
	over $\mathbb{K}$ with base $(\Omega, \mathcal{F}, P)$ if $E$ is an $L^{0}$-module
	and $\|\cdot\|$ is a mapping from $E$ to $L^0_+(\mathcal{F})$ such that the following conditions are satisfied:
	\begin{enumerate} 
		\item [(1)] $\|\xi x\|= |\xi| \|x\|$ for any $\xi \in L^0(\mathcal{F}, \mathbb{K})$ and any $x\in E$, 
		where $\xi x $ stands for the module multiplication of $x$ by $\xi$ and $|\xi|$ is the equivalence 
		class of $|\xi^0|$ for an arbitrarily chosen representative $\xi^0$ of $\xi$ (of course, $|\xi^0|$ 
		is the function defined by $|\xi^0|(\omega)= |\xi^0(\omega)|$ for any $\omega\in \Omega$);
		\item [(2)] $\|x+y\|\leq \|x\|+ \|y\|$ for any $x$ and $y$ in $E$;
		\item [(3)] $\|x\|=0$ implies $x=\theta$ (the null element of $E$). 
	\end{enumerate}
	As usual, $\|\cdot\|$ is called the $L^0$-norm on $E$.
\end{definition}

\par  It should be mentioned that the notion of an $L^0$-normed $L^0$-module, which is
equivalent to that of an $RN$ module, was independently introduced by Gigli in \cite{Gigli2018}
for the study of nonsmooth differential geometry on metric measure spaces, where the
$L^0$-norm was called the pointwise norm.

\par 
Clearly, when $(\Omega, \mathcal{F}, P)$ is trivial, namely, $\mathcal{F}=\{\Omega, \emptyset\}$, then an 
$RN$ module with base $(\Omega, \mathcal{F}, P)$ just reduces to an ordinary normed space.

\par Let $(E, \|\cdot\|)$ be an $RN$ module $(E, \|\cdot\|)$ over $\mathbb{K}$ with base $(\Omega, \mathcal{F}, P)$. 
For any real numbers  $\varepsilon>0$ and $0<\lambda<1$,  let 
$$U_{\theta}(\varepsilon,\lambda)=\{x\in E:P\{\omega\in \Omega:\|x\|(\omega)<\varepsilon\}>1-\lambda\}.$$
Then $\mathcal{U}_{\theta}=\{U_{\theta}(\varepsilon,\lambda): \varepsilon>0,~0<\lambda<1\}$ forms a local base of some 
metrizable linear topology for $E$, called the $(\varepsilon,\lambda)$-topology induced by $\|\cdot\|$, denoted by 
$\mathcal{T}_{\varepsilon,\lambda}$. The idea of introducing the $(\varepsilon,\lambda)$-topology for
$RN$ modules originates from Schweizer and Sklar's work on the theory of probabilistic metric spaces \cite{SS2005}. 
It is clear that a sequence $\{x_{n},n\in \mathbb{N}\}$ in $E$ converges in $\mathcal{T}_{\varepsilon,\lambda}$ to $x\in E$ iff 
$\{\|x_{n}-x\|,n\in \mathbb{N}\}$ converges in probability $P$ to $\theta$.  On the other hand, 
for any $x\in E$ and any $r\in L_{++}^{0}(\mathcal{F})$, let $$B(x,r)=\{y\in E:\|x-y\|<r~\text{on}~\Omega\}.$$
Then $\{B(x,r):x\in E, r\in L_{++}^{0}(\mathcal{F})\}$ forms a base for some Hausdorff topology on $E$, 
which amounts to the locally $L^{0}$-convex topology \cite{FKV2009} induced by $\|\cdot\|$, denoted by $\mathcal{T}_{c}$. 
It is well known from \cite{Guo2010} that $(E,\mathcal{T}_{\varepsilon,\lambda})$ is a metrizable topological module over 
the topological algebra $(L^{0}(\mathcal{F},\mathbb{K}),\mathcal{T}_{\varepsilon,\lambda})$, and it is also well known from \cite{FKV2009} that
$(E,\mathcal{T}_{c})$ is a topological module over the topological ring $(L^{0}(\mathcal{F},\mathbb{K}),\mathcal{T}_{c})$.

\begin{example}\label{ex.RN module}
	Let $L^{0}(\mathcal{F},\mathbb{K}^{d})$ be  the linear space of equivalence classes of random vectors from $(\Omega,\mathcal{F},P)$ to $\mathbb{K}^{d}$, then 
	$L^{0}(\mathcal{F},\mathbb{K}^{d})$ forms an $L^{0}$-module under the module multiplication
	$\cdot:L^{0}(\mathcal{F},\mathbb{K})\times L^{0}(\mathcal{F},\mathbb{K}^{d})$ defined by 
	$$\lambda \cdot x=(\lambda \xi_{1},\lambda \xi_{2},\cdots, \lambda \xi_{d}), \forall \lambda\in L^{0}(\mathcal{F},\mathbb{K}), \forall x=(\xi_{1},\xi_{2},\cdots, \xi_{d})\in L^{0}(\mathcal{F},\mathbb{K}^{d}).$$
	Define the $L^{0}$-inner product $\langle\cdot,\cdot\rangle:L^{0}(\mathcal{F},\mathbb{K}^{d})\times L^{0}(\mathcal{F},\mathbb{K}^{d})\rightarrow L^{0}(\mathcal{F},\mathbb{K})$ by 
	$$\langle x,y\rangle=\sum_{i=1}^{d}x_{i}\bar{y}_{i}, \forall x=(x_{1},x_{2},\cdots,x_{d}),y=(y_{1},y_{2},\cdots,y_{d})\in L^{0}(\mathcal{F},\mathbb{K}^{d}),$$
	where $\bar{y}_{i}$ denotes the complex conjugate of $y_{i}$ for each $i\in \{1,2,\cdots,n\}$. Then 
	$(L^{0}(\mathcal{F},\mathbb{K}^{d}),\langle\cdot,\cdot\rangle)$ is a $\mathcal{T}_{\varepsilon,\lambda}$-complete \textit{random inner product} module (briefly, a $\mathcal{T}_{\varepsilon,\lambda}$-complete $RIP$ module, see \cite{Guo1999} for details). 
	let $\|x\|=\sqrt{\langle x,x\rangle}$ for any $x\in L^{0}(\mathcal{F},\mathbb{K}^{d})$. Then $(L^{0}(\mathcal{F},\mathbb{K}^{d}),\|\cdot\|)$ is a $\mathcal{T}_{\varepsilon,\lambda}$-complete $RN$ module.
\end{example}

\par In fact, $L^{0}(\mathcal{F},\mathbb{K}^{d})$ is also $\mathcal{T}_{c}$-complete. Generally, according to Theorem 3.18 of \cite{Guo2010}, an 
$RN$ module is $\mathcal{T}_{\varepsilon,\lambda}$-complete iff it is  both $\mathcal{T}_{c}$-complete and $\sigma$-stable.

\par 
The randomized version of the Bolzano-Weierstrass theorem \cite{KS2001} states that for any a.s. bounded sequence $\{\xi^0_n, n\in \mathbb{N} \}$ of random vectors from 
$(\Omega, \mathcal{F}, P)$ to $\mathbb{R}^d$ (namely, $\sup \{|\xi^0_n(\omega)|: n\in \mathbb{N} \}< +\infty$ $a.s.$) there exists a sequence 
$\{n_{k}^{0}, k\in \mathbb{N}\}$ of positive integer-valued random variables on $(\Omega, \mathcal{F}, P)$ such that 
$\{n_{k}^{0}(\omega), k\in \mathbb{N} \}$ is strictly increasing for each $\omega\in \Omega$ and $\{\xi^0_{n_{k}^{0}}, k\in \mathbb{N} \}$ 
converges a.s., where $\xi^0_{n_{k}^{0}} (\omega)= \xi^0_{n_{k}^{0}(\omega) } (\omega)=\sum^{\infty} _{l=1} I_{(n_{k}^{0}=l)} (\omega) \xi^0_l(\omega)$ 
for any $k\in \mathbb{N}$ and any $\omega\in \Omega$ (as usual, $(n_{k}^{0}=l)$ denotes the set $\{\omega\in \Omega:n_{k}^{0}(\omega)=l\}$ 
for any $k$ and $l$ in $\mathbb{N}$). Motivated by this basic result, Guo et al. \cite{GWXYC2025} introduced the following notion of random 
sequential compactness in $RN$ modules.

\begin{definition}[\cite{GWXYC2025}]\label{defn.random sequentially compact}
	Let $(E, \|\cdot\|)$ be an $RN$ module over $\mathbb{K}$ with base $(\Omega, \mathcal{F}, P)$ and $G$ a 
	nonempty subset such that $G$ is contained in a $\sigma$-stable subset $H$ of $E$. Given a sequence $\{x_n, n\in \mathbb{N} \}$ 
	in $G$, a sequence $\{y_k, k\in \mathbb{N} \}$ in $H$ is called a random subsequence of $\{x_n, n\in \mathbb{N} \}$ 
	if there exists a sequence $\{n_k, k\in \mathbb{N} \}$ in $L^{0}(\mathcal{F},\mathbb{N})$ such that the following two conditions are satisfied:
	\begin{enumerate} 
		\item [(1)] $n_{k}< n_{k+1}$ on $\Omega$;
		\item [(2)] $y_k=x_{n_k}:= \sum^{\infty}_{l=1} I_{(n_k=l)} x_l$ for each $k\in \mathbb{N}$.
	\end{enumerate}
	Further, $G$ is said to be random (resp., relatively) sequentially compact if there exists a random 
	subsequence $\{y_k, k\in \mathbb{N}\}$ of $\{x_{n}, n\in \mathbb{N} \}$ for any sequence $\{x_{n}, n\in \mathbb{N} \}$ 
	in $G$ such that $\{y_k, k\in \mathbb{N}\}$ converges in $\mathcal{T}_{\varepsilon,\lambda}$ to some element in $G$ (resp., in $E$).
\end{definition}

\par Recall that a set $G$ of an $RN$ module with base $(\Omega,\mathcal{F},P)$ is said to be 
a.s. bounded if $\bigvee_{x\in G}\|x\|\in L_{+}^{0}(\mathcal{F})$.

\begin{remark}\label{rmk. random  sequentially compact}
	(1) In Definition \ref{defn.random sequentially compact}, we require $n_{k}$ to be an element in $L^{0}(\mathcal{F},\mathbb{N})$, 
	rather than a positive integer-valued random variable as in \cite[Definition 2.10]{GWXYC2025}. This choice is made for 
	simplifying written expressions, and it is easy to check that the two formulations are essentially equivalent. 
	(2) In the sense of Definition \ref{defn.random sequentially compact}, the randomized Bolzano-Weierstrass 
	theorem can be restated as: any a.s. bounded nonempty subset of $L^0(\mathcal{F}, \mathbb{R}^d)$ is random relatively sequentially compact. 
	Moreover, as pointed out in \cite{GWXYC2025}, if $G$ is $\sigma$-stable, then $G$ is random relatively sequentially compact iff 
	$\bar{G}_{\varepsilon,\lambda}$ (the closure of $G$ under $\mathcal{T}_{\varepsilon,\lambda}$) is random sequentially compact. Consequently, any a.s. bounded $\sigma$-stable nonempty 
	$\mathcal{T}_{\varepsilon,\lambda}$-closed subset of $L^0(\mathcal{F}, \mathbb{R}^d)$ (for example, an $L^{0}$-simplex) must be random  sequentially compact. 
	(3) For any $(n-1)$-dimensional  $L^{0}$-simplex $S$ of an $RN$ module, as pointed out by \cite[Lemma 5.5]{GWXYC2025}, there exists 
	an $L^{0}$-affine $\mathcal{T}_{\varepsilon,\lambda}$-isomorphism $T$ from $\Delta_{n-1}$ to $S$, where $\Delta_{n-1}:=Conv_{L^{0}}(\{e_{1},e_{2},\cdots,e_{n}\})$ 
	is the standard $(n-1)$-dimensional $L^{0}$-simplex of $L^0(\mathcal{F}, \mathbb{R}^n)$. Since $\Delta_{n-1}$ is random  sequentially compact as discussed above, and 
	$T$ is $\sigma$-stable and $\mathcal{T}_{\varepsilon,\lambda}$-continuous, then $S$ is random  sequentially compact.
\end{remark}

\begin{definition}[\cite{GWXYC2025}]\label{defn.continuous mapping}
	Let $(E_1, \|\cdot\|_1)$ and $(E_2, \|\cdot\|_2)$ be two $RN$ modules over the same scalar field $\mathbb{K}$ with 
	the same base $(\Omega, \mathcal{F}, P)$, and $G_1$ and $G_2$ be two nonempty subsets of $E_1$ and $E_2$, respectively. 
	A mapping $T:G_1\rightarrow G_2$ is said to be:
	\begin{enumerate}
		\item [(1)] $\mathcal{T}_{\varepsilon,\lambda}$-continuous if $T$ is a continuous mapping 
		from $(G_1, \mathcal{T}_{\varepsilon,\lambda})$ to $(G_2, \mathcal{T}_{\varepsilon,\lambda})$.
		\item [(2)] $\mathcal{T}_{c}$-continuous if $T$ is a continuous mapping 
		from $(G_1, \mathcal{T}_{c})$ to $(G_2, \mathcal{T}_{c})$.
		\item [(3)] $a.s.$ sequentially continuous at $x_0\in G_1$ if $\{\|T(x_n)-T(x_0)\|_2, n\in \mathbb{N} \}$ 
		converges $a.s.$ to $0$ for any sequence $\{x_n, n\in \mathbb{N}\}$ in $G_1$ such that $\{\|x_n-x_0\|, n\in \mathbb{N} \}$ 
		converges $a.s.$ to $0$. Further, $T$ is said to be $a.s.$ sequentially continuous if $T$ is $a.s.$ sequentially 
		continuous at any point in $G_1$.
		\item [(4)] random sequentially continuous at $x_0\in G_1$ if $G_1$ is $\sigma$-stable and for any sequence 
		$\{x_n, n\in \mathbb{N} \}$ in $G_1$ convergent in $\mathcal{T}_{\varepsilon,\lambda}$ to $x_0$ there exists 
		a random subsequence $\{x_{n_k}, k\in \mathbb{N}\}$ of $\{x_n, n\in \mathbb{N} \}$ such that $\{T(x_{n_k}), k\in \mathbb{N} \}$ 
		converges in $\mathcal{T}_{\varepsilon,\lambda}$ to $T(x_0)$. Further, $T$ is said to be random sequentially continuous if $T$ 
		is random sequentially continuous at any point in $G_1$.
	\end{enumerate}
\end{definition}

\begin{remark}\label{rmk.continuous}
	It is clear that a.s. sequential continuity implies $\mathcal{T}_{\varepsilon,\lambda}$-continuity. Moreover,  
	when $G_{1}$ is $\sigma$-stable, the $\mathcal{T}_{\varepsilon,\lambda}$-continuity of a mapping $T$ from $G_{1}$ to 
	$G_{2}$ necessarily implies random sequential continuity of $T$. Furthermore, \cite[Lemma 4.3]{GWXYC2025} shows that 
	a $\sigma$-stable mapping $T$ of $G_{1}$ to $G_{2}$ is random sequentially continuous iff it is $\mathcal{T}_{c}$-continuous.
\end{remark}

\par 
For any $(n-1)$-dimensional $L^{0}$-simplex  $S=Conv_{L^{0}}(\{x_{1},x_{2},\cdots,x_{n}\})$ in an $RN$ module $E$ and 
any  $L^{0}$-simplicial subdivision $\mathscr{S}=\{S_{i}:i\in I\}$ of $S$, the random diameter of $\mathscr{S}$ is defined by 
$$diam(\mathscr{S})=\bigvee_{i\in I}diam(S_{i}),$$
where $diam(S_{i}):=\bigvee\{\|x-y\|:x,y\in S_{i}\}$ is called the random diameter of $S_{i}$ for each $i\in I$.

\begin{lemma}\label{lemm.diam}
	Let $(E,\|\cdot\|)$ be an $RN$ module over $\mathbb{K}$ with base $(\Omega,\mathcal{F},P)$, 
	$S=Conv_{L^{0}}(\{x_{1},x_{2},\cdots,x_{n}\})$ an $(n-1)$-dimensional $L^{0}$-simplex in $E$
	and $\mathscr{S}$  an $L^{0}$-barycentric subdivision of $S$. 
	Then
	 $$diam(\mathscr{S})\leq \frac{n-1}{n}diam(S).$$
\end{lemma}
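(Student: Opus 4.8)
The plan is to mimic the classical estimate on the mesh of a barycentric subdivision, transplanting each step into the $L^{0}$-setting by systematically replacing the ordinary supremum with the lattice supremum $\bigvee$ in $\bar{L}^{0}(\mathcal{F})$ and invoking only the two defining properties of the $L^{0}$-norm, namely positive homogeneity $\|\xi x\|=|\xi|\,\|x\|$ and the triangle inequality. Throughout I would write $d:=diam(S)$; since everything lives in the complete lattice $\bar{L}^{0}(\mathcal{F})$, the asserted inequality is automatic on any set where $d=+\infty$, so no a.s.\ boundedness hypothesis on $S$ is needed.

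First I would record the elementary fact that the random diameter of any $L^{0}$-simplex is controlled by the pairwise $L^{0}$-distances of its vertices: for $T=Conv_{L^{0}}(\{z_{1},\ldots,z_{m}\})$ and arbitrary $x=\sum_{k}\lambda_{k}z_{k}$, $y=\sum_{l}\mu_{l}z_{l}$ in $T$ (with $\lambda_{k},\mu_{l}\in L^{0}_{+}(\mathcal{F})$ and $\sum_{k}\lambda_{k}=\sum_{l}\mu_{l}=1$), the identity $x-y=\sum_{k,l}\lambda_{k}\mu_{l}(z_{k}-z_{l})$ together with the triangle inequality and $\sum_{k,l}\lambda_{k}\mu_{l}=1$ gives $\|x-y\|\leq\bigvee_{k,l}\|z_{k}-z_{l}\|$. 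Taking the lattice supremum over $x,y\in T$ yields $diam(T)\leq\bigvee_{k,l}\|z_{k}-z_{l}\|$ (the reverse inequality is immediate, though only this direction is used).

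The heart of the argument is the vertex estimate for a single barycentric cell $S_{\pi}=Conv_{L^{0}}(\{y_{1}^{\pi},\ldots,y_{n}^{\pi}\})$ with $y_{k}^{\pi}=\frac{1}{k}\sum_{i=1}^{k}x_{\pi(i)}$. Fixing $k<l$ (the case $k=l$ is trivial, and $l\geq 2$ in all nontrivial cases), I would note that $y_{l}^{\pi}$ is the barycenter of the $l$ vertices $x_{\pi(1)},\ldots,x_{\pi(l)}$ while $y_{k}^{\pi}$ lies in their $L^{0}$-convex hull. The computation $\|x_{\pi(m)}-y_{l}^{\pi}\|=\frac{1}{l}\|\sum_{i\neq m}(x_{\pi(m)}-x_{\pi(i)})\|\leq\frac{1}{l}\sum_{i\neq m}\|x_{\pi(m)}-x_{\pi(i)}\|\leq\frac{l-1}{l}d$ bounds the distance from this barycenter to each relevant vertex, using $\|x_{\pi(m)}-x_{\pi(i)}\|\leq d$, which holds trivially since both vertices lie in $S$. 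Writing $y_{k}^{\pi}=\sum_{m=1}^{l}\nu_{m}x_{\pi(m)}$ as a convex combination and applying the triangle inequality once more gives $\|y_{k}^{\pi}-y_{l}^{\pi}\|\leq\sum_{m}\nu_{m}\|x_{\pi(m)}-y_{l}^{\pi}\|\leq\frac{l-1}{l}d$. Since $l\leq n$ and $t\mapsto 1-1/t$ is increasing, $\frac{l-1}{l}\leq\frac{n-1}{n}$, whence $\|y_{k}^{\pi}-y_{l}^{\pi}\|\leq\frac{n-1}{n}d$ for every pair $k,l$ and every $\pi\in G(n)$.

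Combining the two previous steps, $diam(S_{\pi})\leq\bigvee_{k,l}\|y_{k}^{\pi}-y_{l}^{\pi}\|\leq\frac{n-1}{n}d$, and taking $\bigvee$ over the finitely many $\pi\in G(n)$ yields $diam(\mathscr{S})=\bigvee_{\pi}diam(S_{\pi})\leq\frac{n-1}{n}diam(S)$. I do not anticipate a genuinely hard step: the only point requiring mild care is the manipulation of the lattice supremum $\bigvee$ in $\bar{L}^{0}(\mathcal{F})$ --- in particular checking that the finite-sum triangle-inequality bounds pass to the supremum and that constant scalar factors such as $1/k$ and $1/l$ commute with the $L^{0}$-norm via positive homogeneity --- but all of these are routine given the complete-lattice structure of $\bar{L}^{0}(\mathcal{F})$ recalled in Section~\ref{sec.2} and the $RN$ module axioms.
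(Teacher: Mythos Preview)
Your proposal is correct and follows essentially the same approach as the paper: both first reduce $diam(T)$ to the lattice supremum of pairwise vertex distances via the identity $x-y=\sum_{k,l}\lambda_{k}\mu_{l}(z_{k}-z_{l})$, then bound $\|y_{k}^{\pi}-y_{l}^{\pi}\|$ by $\frac{n-1}{n}\,diam(S)$, and finally take $\bigvee$ over $\pi\in G(n)$. The only difference is in the middle arithmetic: the paper factors directly, writing (for $k>l$) $y_{k}^{\pi}-y_{l}^{\pi}=\frac{k-l}{k}\bigl(\frac{1}{k-l}\sum_{j=l+1}^{k}x_{\pi(j)}-\frac{1}{l}\sum_{i=1}^{l}x_{\pi(i)}\bigr)$ as $\frac{k-l}{k}$ times the difference of two points of $S$, whereas you first bound the vertex-to-barycenter distance $\|x_{\pi(m)}-y_{l}^{\pi}\|\leq\frac{l-1}{l}d$ and then average. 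Both are standard classical variants and yield the same bound with equal ease; neither offers a real advantage over the other in the $L^{0}$-setting.
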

\begin{proof}
	First, we claim that  $diam(G)=\bigvee_{k,l=1}^{m}\|y_{i}-y_{j}\|$ for any $(n-1)$-dimensional $L^{0}$-simplex  
	$G=Conv_{L^{0}}(\{y_{1},y_{2},\cdots,y_{n}\})$ in $E$. In fact, it is clear that $\bigvee_{k,l=1}^{m}\|y_{k}-y_{l}\|\leq diam(G)$.
	On the other hand, for any $x=\sum_{i=1}^{m}\lambda_{i}y_{i}$ and $y=\sum_{i=1}^{m}\mu_{i}y_{i}$ in $G$, where 
	$\lambda_{i},\mu_{i}\in L_{+}^{0}(\mathcal{F}) $ for any $i\in \{1,2,\cdots,m\}$ and $ \sum_{i=1}^{m}\lambda_{i}=\sum_{i=1}^{m}\mu_{i}=1$, we have 
	\begin{align}
		\|x-y\|&=\|\sum_{i=1}^{m}\lambda_{i}y_{i}-\sum_{i=1}^{m}\mu_{i}y_{i}\|\nonumber\\
		&=\|\sum_{i,j=1}^{m}\lambda_{i}\mu_{j}(y_{i}-y_{j})\|\nonumber\\
		&\leq\sum_{i,j=1}^{m}\lambda_{i}\mu_{j}(\bigvee_{k,l=1}^{m}\|y_{k}-y_{l}\|)\nonumber\\
		&=\bigvee_{k,l=1}^{m}\|y_{k}-y_{l}\|.\nonumber
	\end{align}
	
	\par Second, for any $\pi\in G(n)$ and any $k,l\in \{1,2,\cdots,n\}$, without loss of generality,  we can assume that 
	$k>l$. Then
	\begin{align}
		\|y_{k}^{\pi}-y_{l}^{\pi}\|&=\|\frac{1}{k}\sum_{i=1}^{k}x_{\pi(i)}-\frac{1}{l}\sum_{j=1}^{l}x_{\pi(j)}\|\nonumber\\
		&=\frac{k-l}{k}\|\frac{1}{l}\sum_{i=1}^{l}x_{\pi(i)}-\frac{1}{k-l}\sum_{j=l+1}^{k}x_{\pi(j)}\|\nonumber\\
		&\leq \frac{n-1}{n}\|\frac{1}{l}\sum_{i=1}^{l}x_{\pi(i)}-\frac{1}{k-l}\sum_{j=l+1}^{k}x_{\pi(j)}\|\nonumber\\
		&\leq \frac{n-1}{n}diam(S),\nonumber
	\end{align} 
	implying
	$$diam(S_{\pi})=\bigvee_{k,l=1}^{n}\|y_{k}^{\pi}-y_{l}^{\pi}\|\leq \frac{n-1}{n}diam(S).$$
	It follows that 
	$$diam(\mathscr{S})=\bigvee_{\pi\in G(n)}diam(S_{\pi})\leq \frac{n-1}{n}diam(S).$$
\end{proof}

\par 
The key idea of the proof for the classical Brouwer fixed point theorem for a continuous self-mapping on a simplex $S'$ lies in constructing  a proper labeling function $\psi$ on a given simplicial subdivision $\mathscr{S}'$ of $S'$  that is compatible with the continuous self-mapping on $S'$, see \cite[Theorem 6.1]{Bor1985} for details. Lemma \ref{lemm.label} below provides a method for constructing such a proper $L^{0}$-labeling function in connection with an $(n-1)$-dimensional $L^{0}$-simplex $S$ in an $RN$ module that improves the formulation of \cite[Lemma 2.2]{DKKS2013} in that the single proper $L^{0}$-labeling function provided here can be applied to the $L^{0}$-simplicial subdivision of every $(n-1)$-dimensional $L^{0}$-simplex contained in $S$.

\begin{lemma}\label{lemm.label}
	Let $(E,\|\cdot\|)$ be an $RN$ module with base $(\Omega,\mathcal{F},P)$, 
	$S=Conv_{L^{0}}(\{x_{1},x_{2},\cdots,x_{n}\})$ an $(n-1)$-dimensional $L^{0}$-simplex in $E$
	and $f:S\rightarrow S$ be a $\sigma$-stable mapping. Then there exists a mapping 
	$\phi:S\rightarrow L^{0}(\mathcal{F},\{1,2,\cdots,n\})$ such that the following conditions are satisfied:
	\begin{enumerate}
		\item [(1)] For any $x=\sum_{i=1}^{n}\lambda_{i}^{x}x_{i}\in S$, further let $f(x)=\sum_{i=1}^{n}\mu_{i}^{x}x_{i}$,  where 
		$\lambda_{i}^{x},\mu_{i}^{x}\in L_{+}^{0}(\mathcal{F})$ for any $i\in \{1,2,\cdots,n\}$ and $\sum_{i=1}^{n}\lambda_{i}^{x}=\sum_{i=1}^{n}\mu_{i}^{x}=1$. 
		Then we have
		$$(\phi(x)=i)\wedge [(\lambda_{i}^{x}=0)\vee (\lambda_{i}^{x}<\mu_{i}^{x})]=0, \forall i\in \{1,2,\cdots,n\}.$$ 
		\item [(2)] For any  $(n-1)$-dimensional $L^{0}$-simplex $S_{1}$ contained in $S$ and any $L^{0}$-simplicial subdivision $\mathscr{S}$ of $S_{1}$, $\phi|_{ext_{L^{0}}(\mathscr{S})}$ is a proper $L^{0}$-labeling function of $\mathscr{S}$.
	\end{enumerate}
\end{lemma}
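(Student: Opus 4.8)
The plan is to randomize the classical labelling rule of the simplicial proof of Brouwer's theorem, carrying it out pointwise inside the measure algebra $B_{\mathcal{F}}$ and then gluing the local choices into a single $\sigma$-stable map. For $x=\sum_{i=1}^{n}\lambda_i^x x_i\in S$ with $f(x)=\sum_{i=1}^{n}\mu_i^x x_i$, I first attach to each $i$ the element $A_i^x:=(\lambda_i^x>0)\wedge(\lambda_i^x\geq\mu_i^x)\in B_{\mathcal{F}}$, which records the part of $\Omega$ on which the $i$-th coordinate is an admissible label (positive and nonincreasing under $f$). The first key step is to show $\bigvee_{i=1}^{n}A_i^x=1$. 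Passing to representatives and arguing $\omega$-wise, on a set of full measure the nonnegative numbers $\lambda_i^x(\omega),\mu_i^x(\omega)$ satisfy $\sum_i\lambda_i^x(\omega)=\sum_i\mu_i^x(\omega)=1$; if every $i$ with $\lambda_i^x(\omega)>0$ had $\lambda_i^x(\omega)<\mu_i^x(\omega)$, then summing over the nonempty set $J:=\{i:\lambda_i^x(\omega)>0\}$ would give $1=\sum_{i\in J}\lambda_i^x(\omega)<\sum_{i\in J}\mu_i^x(\omega)\leq 1$, a contradiction. Hence the events $A_i^x$ cover $\Omega$ almost surely. I then disjointify, setting $a_1^x:=A_1^x$ and $a_i^x:=A_i^x\wedge(\bigvee_{j<i}A_j^x)^{c}$ for $i\geq 2$, so that $\{a_i^x:i=1\sim n\}\in p(1)$, and define
$$\phi(x):=\sum_{i=1}^{n}I_{a_i^x}\,i\in L^{0}(\mathcal{F},\{1,2,\cdots,n\}).$$

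Condition (1) is then immediate. Since $(\phi(x)=i)=a_i^x\leq A_i^x$, and $A_i^x$ is by construction the complement of $(\lambda_i^x=0)\vee(\lambda_i^x<\mu_i^x)$ (because $(\lambda_i^x>0)^{c}=(\lambda_i^x=0)$ and $(\lambda_i^x\geq\mu_i^x)^{c}=(\lambda_i^x<\mu_i^x)$), we obtain $(\phi(x)=i)\wedge[(\lambda_i^x=0)\vee(\lambda_i^x<\mu_i^x)]\leq A_i^x\wedge(A_i^x)^{c}=0$ for every $i$.

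The main work is verifying that $\phi$ is $\sigma$-stable, which is also what condition (2) rests on. Given a sequence $\{x^{(m)},m\in\mathbb{N}\}$ in $S$ and $\{c_m,m\in\mathbb{N}\}\in p(1)$ with $x=\sum_m I_{c_m}x^{(m)}$ (well defined since $S$ is $\sigma$-stable), the uniqueness of the barycentric representation in $S$ forces $\lambda_i^x=\sum_m I_{c_m}\lambda_i^{x^{(m)}}$, while the assumed $\sigma$-stability of $f$ gives $f(x)=\sum_m I_{c_m}f(x^{(m)})$ and hence $\mu_i^x=\sum_m I_{c_m}\mu_i^{x^{(m)}}$. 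Consequently $A_i^x\wedge c_m=A_i^{x^{(m)}}\wedge c_m$ for all $i,m$. Using the Boolean identity that $u\wedge c=v\wedge c$ implies $u^{c}\wedge c=v^{c}\wedge c$ (both equal the relative complement of $u\wedge c$ in $c$), the disjointification commutes with meeting by $c_m$, so $a_i^x\wedge c_m=a_i^{x^{(m)}}\wedge c_m$; therefore $(\phi(x)=i)\wedge c_m=(\phi(x^{(m)})=i)\wedge c_m$ for all $i,m$, which yields $\phi(x)=\sum_m I_{c_m}\phi(x^{(m)})$.

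Finally, for condition (2): if $S_1\subseteq S$ is any $(n-1)$-dimensional $L^{0}$-simplex and $\mathscr{S}$ an $L^{0}$-simplicial subdivision of $S_1$, then $ext_{L^{0}}(\mathscr{S})\subseteq S_1\subseteq S$, so $\phi|_{ext_{L^{0}}(\mathscr{S})}$ is defined and, being the restriction of a $\sigma$-stable map, is itself $\sigma$-stable, hence an $L^{0}$-labeling function in the sense of Definition~\ref{defn.labeling function}. Its properness is exactly the special case $(\phi(y)=i)\wedge(\lambda_i^y=0)=0$ obtained from condition (1) (for $x=y\in ext_{L^{0}}(\mathscr{S})$) by discarding the $(\lambda_i^y<\mu_i^y)$ disjunct. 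I expect the delicate point to be the $\sigma$-stability argument — in particular transferring the coordinate identities through $f$ and checking that the ordered disjointification commutes with restriction to each $c_m$ — together with executing the covering estimate $\bigvee_i A_i^x=1$ at the level of equivalence classes rather than fixed representatives.
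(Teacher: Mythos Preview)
Your proposal is correct and follows essentially the same approach as the paper: you define the same events $A_i^x=(\lambda_i^x>0)\wedge(\lambda_i^x\geq\mu_i^x)$ (the paper writes $b_i^x$), prove the covering $\bigvee_i A_i^x=1$, disjointify in the same ordered way, and verify $\sigma$-stability by showing the construction commutes with restriction to each $c_m$. The only cosmetic differences are that you argue the covering pointwise via representatives rather than by a measure-algebra contradiction, and you phrase the $\sigma$-stability step as $A_i^x\wedge c_m=A_i^{x^{(m)}}\wedge c_m$ rather than expanding $b_i^y$ as a countable join; both are equivalent and equally valid.
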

\begin{proof}
	Since for any $x\in S$ there exists a unique $\{\lambda_{i}^{x},i=1\sim n\}\subseteq L^{0}(\mathcal{F},[0,1])$ such that $x=\sum_{i=1}^{n}\lambda_{i}^{x}x_{i}$
	and $\sum_{i=1}^{n}\lambda_{i}^{x}=1$, where $(\lambda_{1}^{x},\lambda_{2}^{x},\cdots,\lambda_{n}^{x})$ is called the $L^{0}$-barycentric coordinate of $x$. For brevity and clarity, in the proof, 
	we always use $\lambda_{i}^{x}$ for the $i$-th $L^{0}$-barycentric coordinate of $x$ and $\mu_{i}^{x}$ for the $i$-th $L^{0}$-barycentric coordinate of $f(x)$.
	Then, for any $x\in S$,  $x=\sum_{i=1}^{n}\lambda_{i}^{x}x_{i}$ and $f(x)=\sum_{i=1}^{n}\mu_{i}^{x}x_{i}$,
	we claim  that
	$$\bigvee_{i=1}^{n}[(\lambda_{i}^{x}>0)\wedge (\lambda_{i}^{x}\geq \mu_{i}^{x})]=1.$$
	Otherwise, 
	$$\bigwedge_{i=1}^{n}[(\lambda_{i}^{x}=0)\vee (\lambda_{i}^{x}< \mu_{i}^{x})]>0 .$$
	Since $\bigvee_{i=1}^{n}(\lambda_{i}^{x}>0)=1$, there exists $j\in \{1,2,\cdots,n\}$ such that 
	$$(\lambda_{j}^{x}>0)\wedge \{\bigwedge_{i=1}^{n}[(\lambda_{i}^{x}=0)\vee (\lambda_{i}^{x}< \mu_{i}^{x})]\}>0,$$
	namely,
	$$a:=[(\lambda_{j}^{x}>0)\wedge (\lambda_{j}^{x}<\mu_{j}^{x})]\wedge\{\bigwedge_{i\in \{1,2,\cdots,n\}\backslash\{j\}}[(\lambda_{i}^{x}=0)\vee (\lambda_{i}^{x}< \mu_{i}^{x})]\} >0.$$
	It follows that $I_{a}\lambda_{j}^{x}<I_{a}\mu_{j}^{x}$ and 
	$$I_{a}\lambda_{i}^{x}\leq I_{a}\mu_{i}^{x}, \forall i\in \{1,2,\cdots,n\}\backslash\{j\},$$
	contradicting the facts that $\sum_{i=1}^{n}\lambda_{i}^{x}=\sum_{i=1}^{n}\mu_{i}^{x}=1$. 
	
	\par For any $x\in S$, let 
	$$b_{i}^{x}=(\lambda_{i}^{x}>0)\wedge (\lambda_{i}^{x}\geq \mu_{i}^{x}), \forall i\in \{1,2,\cdots,n\},$$ 
	and further let 
	$a_{1}^{x}=b_{1}^{x}$,
	$$a_{i}^{x}=b_{i}^{x}\wedge(\bigvee_{j=1}^{i-1}b_{j}^{x})^{c}, \forall i\in \{2,3,\cdots,n\}$$ 
	and $\phi(x)=\sum_{i=1}^{n}I_{a_{i}^{x}}i$. Since each $a_{i}^{x}$ is uniquely determined by $x$, $f$ and $S$, the mapping  $\phi$ defined in this 
	manner is well-defined. Next, we will show that $\phi$ is the desired mapping.
	
	\par (1) For any $x\in S$, it is clear that $\phi(x)\in L^{0}(\mathcal{F},\{1,2,\cdots,n\})$  and  
	$$(\phi(x)=i)\wedge [(\lambda_{i}^{x}=0)\vee (\lambda_{i}^{x}<\mu_{i}^{x})]=a_{i}^{x}\wedge [(\lambda_{i}^{x}=0)\vee (\lambda_{i}^{x}<\mu_{i}^{x})]=0$$
	for any $i\in \{1,2,\cdots,n\}$.
	
	\par (2) For any $\{c_{k},k\in\mathbb{N}\}\in p(1)$ and any sequence $\{y_{k},k\in \mathbb{N}\}$ in $ext_{L^{0}}(\mathscr{S})$, since 
	$y_{k}=\sum_{i=1}^{n}\lambda_{i}^{y_{k}}x_{i}$ and $f(y_{k})=\sum_{i=1}^{n}\mu_{i}^{y_{k}}x_{i}$ for each $ k\in \mathbb{N}$, we have 
	$$y:=\sum_{k=1}^{\infty}I_{c_{k}}y_{k}=\sum_{k=1}^{\infty}I_{c_{k}}(\sum_{i=1}^{n}\lambda_{i}^{y_{k}}x_{i})=\sum_{i=1}^{n}(\sum_{k=1}^{\infty}I_{c_{k}}\lambda_{i}^{y_{k}})x_{i}$$
	and 
	$$f(y):=\sum_{k=1}^{\infty}I_{c_{k}}f(y_{k})=\sum_{k=1}^{\infty}I_{c_{k}}(\sum_{i=1}^{n}\mu_{i}^{y_{k}}x_{i})=\sum_{i=1}^{n}(\sum_{k=1}^{\infty}I_{c_{k}}\mu_{i}^{y_{k}})x_{i},$$
	then $\lambda_{i}^{y}=\sum_{k=1}^{\infty}I_{c_{k}}\lambda_{i}^{y_{k}}$, $\mu_{i}^{y}=\sum_{k=1}^{\infty}I_{c_{k}}\mu_{i}^{y_{k}}$ and 
	\begin{align}
		b_{i}^{y}&=(\lambda_{i}^{y}>0)\wedge (\lambda_{i}^{y}\geq \mu_{i}^{y})\nonumber\\
		&=[\bigvee_{k=1}^{\infty} (c_{k}\wedge (\lambda_{i}^{y_{k}}>0))]\wedge [\bigvee_{k=1}^{\infty} (c_{k}\wedge (\lambda_{i}^{y_{k}}\geq \mu_{i}^{y_{k}}))] \nonumber\\
		&=\bigvee_{k=1}^{\infty} [c_{k}\wedge (\lambda_{i}^{y_{k}}>0)\wedge(\lambda_{i}^{y_{k}}\geq \mu_{i}^{y_{k}})]\nonumber\\
		&=\bigvee_{k=1}^{\infty} (c_{k}\wedge b_{i}^{y_{k}})\nonumber
	\end{align}
	for any $i\in \{1,2,\cdots,n\}$. It follows that 
	\begin{align}
		a_{i}^{y}&=b_{i}^{y}\wedge(\bigvee_{j=1}^{i-1}b_{j}^{y})^{c}\nonumber\\
		&=[\bigvee_{k=1}^{\infty} (c_{k}\wedge b_{i}^{y_{k}})]\wedge \{\bigvee_{j=1}^{i-1}[\bigvee_{l=1}^{\infty} (c_{l}\wedge b_{j}^{y_{l}})]\}^{c}\nonumber\\
		&=\bigvee_{k=1}^{\infty}\{\bigwedge_{j=1}^{i-1}[\bigwedge_{l=1}^{\infty} (c_{k}\wedge b_{i}^{y_{k}})\wedge(c_{l}\wedge b_{j}^{y_{l}})^{c}]\}\nonumber\\
		&=\bigvee_{k=1}^{\infty}\{\bigwedge_{j=1}^{i-1}[(c_{k}\wedge b_{i}^{y_{k}})\wedge(c_{k}\wedge b_{j}^{y_{k}})^{c} \wedge (\bigwedge_{l\in \mathbb{N},l\neq k} (c_{k}\wedge b_{i}^{y_{k}})\wedge(c_{l}\wedge b_{j}^{y_{l}})^{c})]\}\nonumber\\
		&=\bigvee_{k=1}^{\infty}\{\bigwedge_{j=1}^{i-1}[(c_{k}\wedge b_{i}^{y_{k}})\wedge(b_{j}^{y_{k}})^{c} \wedge (c_{k}\wedge b_{i}^{y_{k}})]\}\nonumber\\
		&=\bigvee_{k=1}^{\infty} [c_{k}\wedge b_{i}^{y_{k}}\wedge (\bigvee_{j=1}^{i-1}b_{j}^{y_{k}})^{c}]\nonumber\\
		&=\bigvee_{k=1}^{\infty}(c_{k}\wedge a_{i}^{y_{k}})\nonumber
	\end{align}
	for any $i\in \{1,2,\cdots,n\}$. Thus, we have 
	\begin{align}
		\phi(\sum_{k=1}^{\infty}I_{c_{k}}y_{k})&=\phi(y)\nonumber\\
		&=\sum_{i=1}^{n}I_{a_{i}^{y}}i\nonumber\\
		&=\sum_{i=1}^{n}I_{\bigvee_{k=1}^{\infty}(c_{k}\wedge a_{i}^{y_{k}})}i\nonumber\\
		&=\sum_{i=1}^{n}\sum_{k=1}^{\infty}I_{c_{k}}I_{a_{i}^{y_{k}}}i\nonumber\\
		&=\sum_{k=1}^{\infty}I_{c_{k}}(\sum_{i=1}^{n}I_{a_{i}^{y_{k}}}i)\nonumber\\
		&=\sum_{k=1}^{\infty}I_{c_{k}} \phi (y_{k}),\nonumber
	\end{align}
	which implies that $\phi$ is $\sigma$-stable, namely, $\phi$ is an $L^{0}$-labeling mapping of $\mathscr{S}$.
	
	\par Moreover, for any $x\in ext_{L^{0}}(\mathscr{S})$, we have 
	\begin{align}
		(\phi(x)=i)\wedge (\lambda_{i}^{x}=0)&=a_{i}^{x}\wedge (\lambda_{i}^{x}=0)\nonumber\\
		&\leq b_{i}^{x}\wedge (\lambda_{i}^{x}=0)\nonumber\\
		&=(\lambda_{i}^{x}>0)\wedge (\lambda_{i}^{x}\geq \mu_{i}^{x})\wedge (\lambda_{i}^{x}=0)\nonumber\\
		&=0\nonumber
	\end{align}
	for any $i\in \{1,2,\cdots,n\}$. Hence $\phi$ is proper.
\end{proof}

\par Proposition \ref{prop.converge a.s.3} below will be used in the proof of Lemma \ref{lemm.Brouwer}.

	\begin{proposition}[{\cite[Proposition 2.14]{TMGYS2025}}]\label{prop.converge a.s.3}
		Let $(E_{1},\|\cdot\|)$ and $(E_{2},\|\cdot\|)$ be two $RN$ modules over $\mathbb{K}$ with base $(\Omega,\mathcal{F},P)$, $G_{1}\subset E_{1}$ and $G_{2}\subset E_{2}$ two $\sigma$-stable subsets, and $f: G_{1}\rightarrow G_{2}$ a $\sigma$-stable random sequentially continuous mapping. For a sequence $\{(x_{1}^{m}, x_{2}^{m},\cdots, x_{l}^{m}), m\in \mathbb{N} \}$ in $G_{1}^{l}$, where $l$ is a fixed positive integer and $G^{l}_{1}$ is the $l$--th Cartesian power set of $G_{1}$, if there exists a random subsequence $\{(x_{1}^{M_{n}^{(0)}}, x_{2}^{M_{n}^{(0)}}, \cdots, x_{l}^{M_{n}^{(0)}}), n\in \mathbb{N}\}$ of which such that $\{x_{i}^{M_{n}^{(0)}}, n\in \mathbb{N}\}$ converges in $\mathcal{T}_{\varepsilon,\lambda}$ to some $y_{i}\in G_{1}$ for each $i\in \{1,2,\cdots, l\}$, then there exists a random subsequence $\{(x_{1}^{M_{n}}, x_{2}^{M_{n}}, \cdots, x_{l}^{M_{n}}),n\in \mathbb{N}\}$ of $\{(x_{1}^{M_{n}^{(0)}}, x_{2}^{M_{n}^{(0)}}, \cdots, x_{l}^{M_{n}^{(0)}}), n\in \mathbb{N}\}$ such that $\{x_{i}^{M_{n}}, n\in \mathbb{N}\}$ converges in $\mathcal{T}_{\varepsilon,\lambda}$ to $y_i$ and $\{f(x_{i}^{M_{n}}),n\in \mathbb{N}\}$ converges in $\mathcal{T}_{\varepsilon,\lambda}$ to $f(y_{i})$ for each $i\in \{1,2,\cdots, l\}$.
\end{proposition}

\par Lemma \ref{lemm.Brouwer} below  is the $L^{0}$-simplex version of the random Brouwer fixed point theorem in $RN$ modules.

\begin{lemma}\label{lemm.Brouwer}
	Let $S=Conv_{L^{0}}(\{x_{1},x_{2},\cdots,x_{n}\})$ be an $(n-1)$-dimensional $L^{0}$-simplex in an $RN$ module $E$ and 
	$f:S\rightarrow S$ be a $\sigma$-stable and random sequentially continuous mapping. Then $f$ has a fixed point.
\end{lemma}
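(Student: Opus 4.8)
The plan is to run the classical Sperner-based proof of Brouwer's theorem, but with the random Sperner lemma (Theorem \ref{thm.Sperner}) replacing the classical one and with ordinary sequential compactness and continuity replaced by their randomized counterparts. First I would fix, once and for all, the $\sigma$-stable mapping $\phi:S\to L^{0}(\mathcal{F},\{1,2,\cdots,n\})$ produced by Lemma \ref{lemm.label} for $f$; by part (2) of that lemma its restriction to the $L^{0}$-extreme points of any $L^{0}$-simplicial subdivision of any $(n-1)$-dimensional $L^{0}$-simplex contained in $S$ is a proper $L^{0}$-labeling function. Next I would take the sequence $\{\mathscr{S}_{\nu},\nu\in\mathbb{N}\}$ of iterated $L^{0}$-barycentric subdivisions of $S$ (the $\nu$-th obtained by barycentrically subdividing every member of $\mathscr{S}_{\nu-1}$); Lemma \ref{lemm.diam} gives $diam(\mathscr{S}_{\nu})\leq(\frac{n-1}{n})^{\nu}diam(S)$, so the random diameters tend a.s.\ to $0$. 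Applying Theorem \ref{thm.Sperner} to each pair $(\mathscr{S}_{\nu},\phi)$ yields, for every $\nu$, a completely labeled $(n-1)$-dimensional $L^{0}$-simplex $T_{\nu}=Conv_{L^{0}}(\{z_{1}^{\nu},z_{2}^{\nu},\cdots,z_{n}^{\nu}\})$ with $z_{j}^{\nu}\in ext_{L^{0}}(\mathscr{S}_{\nu})$ and $\phi(z_{j}^{\nu})=j$ for each $j$.

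The crucial consequence of $\phi(z_{j}^{\nu})=j$ (i.e.\ $(\phi(z_{j}^{\nu})=j)=1$) is extracted from part (1) of Lemma \ref{lemm.label}: writing $\lambda^{z}$ and $\mu^{z}$ for the $L^{0}$-barycentric coordinates of $z$ and of $f(z)$, the identity $(\phi(z_{j}^{\nu})=j)\wedge[(\lambda_{j}^{z_{j}^{\nu}}=0)\vee(\lambda_{j}^{z_{j}^{\nu}}<\mu_{j}^{z_{j}^{\nu}})]=0$ forces $\lambda_{j}^{z_{j}^{\nu}}\geq\mu_{j}^{z_{j}^{\nu}}$ for every $j$ and every $\nu$. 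Moreover, since $T_{\nu}=\sum_{m}I_{a_{m}}S_{m}$ is a finite concatenation of members of $\mathscr{S}_{\nu}$, one has $\|z_{i}^{\nu}-z_{j}^{\nu}\|\leq diam(T_{\nu})\leq diam(\mathscr{S}_{\nu})$, so all $n$ vertices of $T_{\nu}$ coalesce a.s.\ as $\nu\to\infty$.

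For the limiting argument I would use that $S$ is random sequentially compact (Remark \ref{rmk. random sequentially compact}). Extract a random subsequence $\{z_{1}^{\nu_{k}},k\in\mathbb{N}\}$ of $\{z_{1}^{\nu},\nu\in\mathbb{N}\}$ converging in $\mathcal{T}_{\varepsilon,\lambda}$ to some $x^{*}\in S$; because the random indices satisfy $\nu_{k}<\nu_{k+1}$ on $\Omega$ they tend a.s.\ to $+\infty$, whence $diam(\mathscr{S}_{\nu_{k}})\to 0$ a.s.\ and, by the previous paragraph, $z_{j}^{\nu_{k}}\to x^{*}$ in $\mathcal{T}_{\varepsilon,\lambda}$ for every $j$. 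The map sending a point of $S$ to its tuple of $L^{0}$-barycentric coordinates is $\mathcal{T}_{\varepsilon,\lambda}$-continuous (it is the inverse of the $L^{0}$-affine $\mathcal{T}_{\varepsilon,\lambda}$-isomorphism of Remark \ref{rmk. random sequentially compact}(3)), so $\lambda_{j}^{z_{j}^{\nu_{k}}}\to\lambda_{j}^{x^{*}}$ in probability. Fixing $j$, random sequential continuity of $f$ (Definition \ref{defn.continuous mapping}(4)) furnishes a further random subsequence $\{z_{j}^{\nu_{k_{l}}},l\in\mathbb{N}\}$ along which $f(z_{j}^{\nu_{k_{l}}})\to f(x^{*})$, hence $\mu_{j}^{z_{j}^{\nu_{k_{l}}}}\to\mu_{j}^{x^{*}}$ in probability; passing to the limit in $\lambda_{j}^{z_{j}^{\nu_{k_{l}}}}\geq\mu_{j}^{z_{j}^{\nu_{k_{l}}}}$ (the $L^{0}$-order is closed under convergence in probability) yields $\lambda_{j}^{x^{*}}\geq\mu_{j}^{x^{*}}$.

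Since this holds for each $j$ and $\sum_{j=1}^{n}\lambda_{j}^{x^{*}}=\sum_{j=1}^{n}\mu_{j}^{x^{*}}=1$, summing forces $\lambda_{j}^{x^{*}}=\mu_{j}^{x^{*}}$ for all $j$, and uniqueness of the $L^{0}$-barycentric representation gives $f(x^{*})=\sum_{j}\mu_{j}^{x^{*}}x_{j}=\sum_{j}\lambda_{j}^{x^{*}}x_{j}=x^{*}$, so $x^{*}$ is the desired fixed point. I expect the main obstacle to be exactly this limiting step: random sequential continuity only returns a \emph{further} random subsequence on which the image converges, and the random indices $\nu_{k}$ must be tracked so that the diameter estimate still drives all vertices to one and the same limit. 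The point that unlocks it is that the target $x^{*}$ is already pinned down, so the scalar inequalities $\lambda_{j}^{x^{*}}\geq\mu_{j}^{x^{*}}$ can be obtained one index $j$ at a time, each along its own subsequence, with no need for a single common subsequence. A secondary technical point to check is that the iterated $L^{0}$-barycentric subdivisions really are $L^{0}$-simplicial subdivisions in the sense of Definition \ref{defn.$L^{0}$-simplicial subdivision}, which reduces to the classical fact for the counterpart subdivisions together with the $\sigma$-stability and common-face conditions.
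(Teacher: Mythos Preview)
Your proposal is correct and follows the same Sperner-based strategy as the paper, but with two tactical differences worth noting. First, you take iterated barycentric subdivisions of $S$ itself (obtaining non-nested completely labeled simplexes $T_\nu$), whereas the paper at each stage barycentrically subdivides only the \emph{current} completely labeled simplex $S^m$ and applies Theorem~\ref{thm.Sperner} there, producing a nested chain $S\supseteq S^1\supseteq S^2\supseteq\cdots$; the paper highlights this nesting in Remark~\ref{rmk.Brouwer1} as a feature for future use, and it has the side benefit that only the \emph{single} barycentric subdivision of Example~\ref{ex.$L^{0}$-barycentric subdivision} is ever invoked, so the verification you flag about iterated subdivisions satisfying Definition~\ref{defn.$L^{0}$-simplicial subdivision} never arises. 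Second, for the passage to the limit under random sequential continuity, the paper calls on Lemma~4.4 of \cite{GWXYC2025} to extract a single random subsequence along which all $n$ vertices \emph{and} all $n$ images converge simultaneously, whereas you handle each coordinate $j$ separately along its own further random subsequence; your observation that this suffices---because $x^*$ is already fixed and the scalar inequalities $\lambda_j^{x^*}\geq\mu_j^{x^*}$ can be established one $j$ at a time---is correct and is more self-contained, since it bypasses that external lemma.
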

\begin{proof}
	Let $\phi$ be the mapping obtained as in Lemma \ref{lemm.label}.  
	
	\par Let $\mathscr{S}$ be an $L^{0}$-barycentric subdivision of $S$. For $\phi$ and $\mathscr{S}$, by Theorem \ref{thm.Sperner} there exists an $(n-1)$-dimensional $L^{0}$-simplex 
	$S^{1}:=Conv_{L^{0}}(\{y_{1}^{1},y_{2}^{1},\cdots,y_{n}^{1}\})$,  which is a finite concatenation of some finite family of $L^{0}$-simplexes in $\mathscr{S}$,  namely, there exist a finite family $\{S_{k}^{1},k=1\sim M\}$ 
	of $(n-1)$-dimensional $L^{0}$-simplexes in $\mathscr{S}$ and $\{a_{k}^{1},k=1\sim M\}\in p(1)$ such that 
	$$S^{1}=\sum_{k=1}^{M}I_{a_{k}^{1}}S_{k}^{1}$$ 
	is  completely labeled by $\phi$ and satisfies that
	$$\phi(y_{j}^{1})=j~\text{for~any}~j\in \{1,2,\cdots,n\}.$$
	It is easy to see that 
	\begin{align}
		diam(S^{1})&\leq \bigvee_{k=1}^{M}diam(S_{k}^{1})\nonumber\\
		&\leq diam (\mathscr{S})\nonumber\\
		&\leq \frac{n-1}{n}diam(S).\nonumber
	\end{align}
	
	\par Let $\mathscr{S}_{1}$ be an $L^{0}$-barycentric subdivision of $S^{1}$. Now, similarly, for $\phi$ and $\mathscr{S}_{1}$, by Theorem \ref{thm.Sperner} there exists an $(n-1)$-dimensional $L^{0}$-simplex 
	$S^{2}:=Conv_{L^{0}}(\{y_{1}^{2},y_{2}^{2},\cdots,y_{n}^{2}\})$,  which is a finite concatenation of some finite family of $L^{0}$-simplexes in $\mathscr{S}_{1}$, such that 
	$S^{2}$ is  completely labeled by $\phi$
	 and satisfies that
	$$\phi(y_{j}^{2})=j~\text{for~any}~j\in \{1,2,\cdots,n\},$$
	and 
	$$diam(S^{2})\leq  \frac{n-1}{n}diam(S^{1})\leq (\frac{n-1}{n})^{2}diam(S).$$

    \par 
	By induction, for each $m\in \mathbb{N}$, there exists 
	an $(n-1)$-dimensional $L^{0}$-simplex $S^{m}:=Conv_{L^{0}}(\{y_{1}^{m},y_{2}^{m},\cdots,y_{n}^{m}\})$, which is a finite concatenation of some finite family of $L^{0}$-simplexes in $\mathscr{S}_{m-1}$, such that 
	$S^{m}$ is  completely labeled by $\phi$  and satisfies that
	$$\phi(y_{j}^{m})=j~\text{for~any}~j\in \{1,2,\cdots,n\},$$
	and  
	$$diam(S^{m})\leq  (\frac{n-1}{n})^{m}diam(S),$$
	where $\mathscr{S}_{m}$ is the  $L^{0}$-barycentric subdivision of $S^{m}$ for each $m\in \mathbb{N}$.

    \par 
	Then $\{(y_{1}^{m},y_{2}^{m}\cdots,y_{n}^{m}),m\in \mathbb{N}\}$ is a sequence in $S^{(n)}$, where $S^{(n)}$ is the 
	$n$-th Cartesian power set of $S$. 
	
	\par For $\{y_{1}^{m},m\in \mathbb{N}\}$, since $S$ is random sequentially compact by Remark \ref{rmk. random  sequentially compact}, there exists a random subsequence $\{y_{1}^{m_{k}'}, k\in \mathbb{N}\}$ converging in $\mathcal{T}_{\varepsilon,\lambda}$ to some $y$ in $S$. Since $\{diam(S^{m}),m\in \mathbb{N}\}$  converges $a.s.$ to $0$, by \cite[Lemma 2.11]{TMGYS2025} $\{d^{m_{k}'}, k\in \mathbb{N}\}$ converges $a.s.$ (of course, also in $\mathcal{T}_{\varepsilon,\lambda}$) to $0$, where 
	$$d^{m_{k}'}:=\sum_{l=1}^{\infty}I_{( m_{k}' =l)}diam(S^{l}), \forall k\in \mathbb{N}.$$
	For any $i\in \{2,3,\cdots,n\}$, we have 
	\begin{align}
		\|y_{i}^{m_{k}'}-y\|&\leq \|y_{i}^{m_{k}'}-y_{1}^{m_{k}'}\|+\|y_{1}^{m_{k}'}-y\|\nonumber\\
		&=\sum_{l=1}^{\infty}I_{( m_{k}' =l)}\|y_{i}^{l}-y_{1}^{l}\|+\|y_{1}^{m_{k}'}-y\|\nonumber\\
		&\leq \sum_{l=1}^{\infty}I_{( m_{k}' =l)}diam(S^{l})+\|y_{1}^{m_{k}'}-y\|\nonumber\\
		&= d^{m_{k}'}+\|y_{1}^{m_{k}'}-y\|,\nonumber 
	\end{align}
	implying that  $\{y_{i}^{m_{k}'}, k\in \mathbb{N}\}$ converges in $\mathcal{T}_{\varepsilon,\lambda}$ to $y$. 
	To sum up,  $\{(y_{1}^{m_{k}'},y_{2}^{m_{k}'},\cdots,y_{n}^{m_{k}'})$, $k\in \mathbb{N}\}$ is a random subsequence of 
	$\{(y_{1}^{m},y_{2}^{m},\cdots,y_{n}^{m}),m\in \mathbb{N}\}$ such that $\{y_{i}^{m_{k}'}, k\in \mathbb{N}\}$ converges in 
	$\mathcal{T}_{\varepsilon,\lambda}$ to $y$ for any $i\in \{1,2,\cdots,n\}$. Since $f$ is $\sigma$-stable and random sequentially continuous,
	by Proposition \ref{prop.converge a.s.3} there exists a random subsequence $\{(y_{1}^{m_{k}},y_{2}^{m_{k}},\cdots,y_{n}^{m_{k}}),k\in \mathbb{N}\}$ 
	of $\{(y_{1}^{m_{k}'},y_{2}^{m_{k}'},\cdots,y_{n}^{m_{k}'}),k\in \mathbb{N}\}$ such that $\{y_{i}^{m_{k}},k\in \mathbb{N}\}$ converges in $\mathcal{T}_{\varepsilon,\lambda}$ 
	to $y$ and $\{f(y_{i}^{m_{k}}),k\in \mathbb{N}\}$ converges in $\mathcal{T}_{\varepsilon,\lambda}$ to $f(y)$ for each $i\in \{1,2,\cdots,n\}$.
	
	\par Let $y=\sum_{i=1}^{n}\lambda_{i}x_{i}$ and $f(y)=\sum_{i=1}^{n}\mu_{i}x_{i}$, where  
	$\lambda_{i},\mu_{i}\in L_{+}^{0}(\mathcal{F})$ for any $i\in \{1,2,\cdots,n\}$ are such that $\sum_{i=1}^{n}\lambda_{i}=\sum_{i=1}^{n}\mu_{i}=1$.
	Next, we will show that $f(y)=y$, namely, $y$ is a fixed point of $f$.
	
	\par For any fixed $j\in \{1,2,\cdots,n\}$ and $k\in \mathbb{N}$, let $y_{j}^{m_{k}}=\sum_{i=1}^{n}\lambda_{i}^{m_{k},j}x_{i}$ and 
	$f(y_{j}^{m_{k}})=\sum_{i=1}^{n}\mu_{i}^{m_{k},j}x_{i}$, where $\lambda_{i}^{m_{k},j},\mu_{i}^{m_{k},j}\in L_{+}^{0}(\mathcal{F})$ 
	for any $i\in \{1,2,\cdots,n\}$ and $\sum_{i=1}^{n}\lambda_{i}^{m_{k},j}=\sum_{i=1}^{n}\mu_{i}^{m_{k},j}=1$.
	Since $\phi$ is $\sigma$-stable, we have 
	\begin{align}
		\phi(y_{j}^{m_{k}})&=\phi(\sum_{l=1}^{\infty}I_{(m_{k}=l)}y_{j}^{m'_{l}})\nonumber\\
		&=\sum_{l=1}^{\infty}I_{(m_{k}=l)}\phi(y_{j}^{m'_{l}})\nonumber\\
		&=\sum_{l=1}^{\infty}I_{(m_{k}=l)}\phi(\sum_{r=1}^{\infty}I_{(m'_{l}=r)}y_{j}^{r}))\nonumber\\
		&=\sum_{l=1}^{\infty}I_{(m_{k}=l)}\sum_{r=1}^{\infty}I_{(m'_{l}=r)}\phi(y_{j}^{r})\nonumber\\
		&=\sum_{l=1}^{\infty}I_{(m_{k}=l)}\sum_{r=1}^{\infty}I_{(m'_{l}=r)}j\nonumber\\
		&=j.\nonumber
	\end{align}
	So $\lambda_{j}^{m_{k},j}\geq \mu_{j}^{m_{k},j}$ by Lemma \ref{lemm.label}. 
	
	\par Furthermore,  by \cite[Lemma 5.5]{GWXYC2025}, 
	$\{\lambda_{j}^{m_{k},j},k\in \mathbb{N}\}$ converges in $\mathcal{T}_{\varepsilon,\lambda}$ to $\lambda_{j}$ and 
	$\{\mu_{j}^{m_{k},j},k\in \mathbb{N}\}$ converges in $\mathcal{T}_{\varepsilon,\lambda}$ to $\mu_{j}$ for each $j\in \{1,2,\cdots,n\}$, 
	then  
	$$\lambda_{j}\geq \mu_{j},~ \forall j\in \{1,2,\cdots,n\}.$$
	However, $\sum_{j=1}^{n}\lambda_{j}=\sum_{j=1}^{n}\mu_{j}=1$, then 
	$$\lambda_{j}= \mu_{j},~ \forall j\in \{1,2,\cdots,n\}.$$
	Hence $f(y)=y$, and $y$ is a fixed point.
\end{proof}

\begin{remark}\label{rmk.Brouwer1}
  Lemma \ref{lemm.Brouwer} is a generalization of \cite[Theorem 2.3]{DKKS2013}, since $L^{0}(\mathcal{F},\mathbb{R}^{d})$ is an $RN$ module, and a $\sigma$-stable a.s. sequentially continuous mapping is, obviously, random sequentially continuous, as noted in Remark \ref{rmk.continuous}. Since random sequential continuity is much more complicated than a.s. sequential continuity,  Proposition \ref{prop.converge a.s.3} has played a key role in the treatment of random sequential continuity in the proof of Lemma \ref{lemm.Brouwer}. Besides, we would also like to emphasize another difference between the proofs of Lemma \ref{lemm.Brouwer} and \cite[Theorem 2.3]{DKKS2013}: the sequence $\{S^{m},m\in \mathbb{N}\}$ of $(n-1)$-dimensional $L^{0}$-simplexes, which is constructed in Lemma \ref{lemm.Brouwer}, has the nice property that $S^{m+1}\subseteq S^{m}$ for each $m\in \mathbb{N}$. Contrarily, the sequence $\{S^{m},m\in \mathbb{N}\}$ constructed in \cite[Theorem 2.3]{DKKS2013} does not have this property. The nice property of our sequence will be used to study other problems in the future. Besides, the proof of Lemma \ref{lemm.Brouwer} is simpler and clearer than that of \cite[Theorem 2.3]{DKKS2013} since we only employ a single proper $L^{0}$-labeling function throughout the proof of Lemma \ref{lemm.Brouwer}.  
\end{remark}

\par Let $H$ be a Hilbert space and $G$ be a nonempty closed convex subset of $H$. Then it is well known that the classical projection $P_{G}$ of $H$ onto $G$
is nonexpensive. By a completely similar method, one can prove the following lemma.

\begin{lemma}\label{lemm.projection}
	Let $(H,\langle\cdot,\cdot\rangle)$ be a $\mathcal{T}_{\varepsilon,\lambda}$-complete random inner product module over $\mathbb{K}$ 
	with base $(\Omega,\mathcal{F},P)$ and $G$ be a nonempty $\mathcal{T}_{\varepsilon,\lambda}$-closed $L^{0}$-convex subset of $H$. 
	Then for any $x\in H$, there exists an unique element $P_{G}(x)$ in $G$ such that 
	$$\|x-P_{G}(x)\|=\bigwedge \{\|x-g\|:g\in G\},$$ 
	where $\|\cdot\|$ is the $L^{0}$-norm induced by the $L^{0}$-inner product $\langle\cdot,\cdot\rangle$. Furthermore, this defines 
	a mapping $P_{G}:H\rightarrow G$, called the random projection onto $G$ that is nonexpensive, namely, 
	$$\|P_{G}(x)-P_{G}(y)\|\leq \|x-y\|, \forall x,y\in H.$$
\end{lemma}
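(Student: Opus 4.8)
The plan is to transcribe the classical Hilbert-space projection argument into the $L^{0}$-setting, replacing the real infimum by the lattice infimum $\bigwedge$ in $\bar{L}^{0}(\mathcal{F})$ and ordinary completeness by $\mathcal{T}_{\varepsilon,\lambda}$-completeness. Write $d=\bigwedge\{\|x-g\|:g\in G\}$; since $0\le d\le\|x-g_{0}\|$ for any fixed $g_{0}\in G$, we have $d\in L^{0}_{+}(\mathcal{F})$. The first step is to produce a minimizing sequence. The key observation is that the family $\{\|x-g\|:g\in G\}$ is downward directed: given $g_{1},g_{2}\in G$, put $a=(\|x-g_{1}\|\le\|x-g_{2}\|)$ and $g_{3}=I_{a}g_{1}+I_{a^{c}}g_{2}$. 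Because $L^{0}$-convexity (taking $\lambda=I_{a}$) makes $G$ finitely stable, $g_{3}\in G$; and because the $L^{0}$-inner product gives $\langle I_{a}u,I_{a^{c}}v\rangle=0$, one has the identity $\|I_{a}u+I_{a^{c}}v\|=I_{a}\|u\|+I_{a^{c}}\|v\|$, whence $\|x-g_{3}\|=\|x-g_{1}\|\wedge\|x-g_{2}\|$. By the standard lemma on lattice infima of downward-directed subsets of $L^{0}_{+}(\mathcal{F})$ there is a sequence $\{h_{n},n\in\mathbb{N}\}$ in $G$ with $\bigwedge_{n}\|x-h_{n}\|=d$ a.s.; replacing $h_{n}$ by the finite concatenation realizing $\min_{1\le k\le n}\|x-h_{k}\|$ (again in $G$ by finite stability), we obtain $\{g_{n}\}\subseteq G$ with $\|x-g_{n}\|\downarrow d$ a.s.

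Next I would run the $L^{0}$-valued parallelogram law. For $m\ge n$, $L^{0}$-convexity gives $\tfrac{1}{2}(g_{n}+g_{m})\in G$, so $\|x-\tfrac{1}{2}(g_{n}+g_{m})\|\ge d$, and
\begin{equation*}
\|g_{n}-g_{m}\|^{2}=2\|x-g_{n}\|^{2}+2\|x-g_{m}\|^{2}-4\|x-\tfrac{1}{2}(g_{n}+g_{m})\|^{2}\le 4(\|x-g_{n}\|^{2}-d^{2}),
\end{equation*}
which tends a.s. to $0$ as $n\to\infty$. Hence $\{g_{n}\}$ is a.s. Cauchy, so $\mathcal{T}_{\varepsilon,\lambda}$-Cauchy; $\mathcal{T}_{\varepsilon,\lambda}$-completeness of $H$ yields a limit $P_{G}(x)$, which lies in $G$ because $G$ is $\mathcal{T}_{\varepsilon,\lambda}$-closed. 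Continuity of the $L^{0}$-norm gives $\|x-P_{G}(x)\|=d$. Uniqueness is immediate from the same parallelogram law: if $z_{1},z_{2}\in G$ both attain $d$, then $\|z_{1}-z_{2}\|^{2}\le 2d^{2}+2d^{2}-4d^{2}=0$, so $z_{1}=z_{2}$ by axiom (3) of Definition \ref{defn. RN module}.

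For nonexpansiveness I would first establish the $L^{0}$-variational inequality. Fix $g\in G$ and set $u=P_{G}(x)$; for each ordinary scalar $t\in(0,1]$ the point $(1-t)u+tg\in G$, so expanding $d^{2}\le\|x-u-t(g-u)\|^{2}$ and using $\|x-u\|^{2}=d^{2}$ gives $2\,\mathrm{Re}\langle x-u,g-u\rangle\le t\|g-u\|^{2}$; letting $t=1/k\downarrow 0$ yields $\mathrm{Re}\langle x-u,g-u\rangle\le 0$ for all $g\in G$. Now let $v=P_{G}(y)$; applying this with $(x,u)$ and $g=v$, and with $(y,v)$ and $g=u$, and adding, gives $\mathrm{Re}\langle x-y,v-u\rangle+\|v-u\|^{2}\le 0$, so by the $L^{0}$-Cauchy--Schwarz inequality $\|u-v\|^{2}\le\mathrm{Re}\langle x-y,u-v\rangle\le\|x-y\|\,\|u-v\|$. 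Finally, from $\alpha^{2}\le\beta\alpha$ with $\alpha=\|u-v\|$ and $\beta=\|x-y\|$ in $L^{0}_{+}(\mathcal{F})$ one reads off $\alpha\le\beta$ (examine the sets $(\alpha>0)$ and $(\alpha=0)$ separately), i.e. $\|P_{G}(x)-P_{G}(y)\|\le\|x-y\|$.

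The step I expect to be the main obstacle is the production of the a.s.-decreasing minimizing sequence: one must verify that the lattice infimum $\bigwedge\{\|x-g\|:g\in G\}$ over the (generally uncountable) set $G$ is genuinely approximated a.s. by a countable subfamily, and this is exactly where downward directedness --- supplied by finite stability together with the identity $\|I_{a}u+I_{a^{c}}v\|=I_{a}\|u\|+I_{a^{c}}\|v\|$ --- is indispensable. The remaining $L^{0}$-analogues (the parallelogram law, Cauchy--Schwarz, continuity of the $L^{0}$-norm, and the passage from a.s.-Cauchy to $\mathcal{T}_{\varepsilon,\lambda}$-convergence) are the routine points of difference from the scalar proof.
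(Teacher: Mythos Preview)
Your proposal is correct and is exactly what the paper has in mind: the paper does not give a proof at all but simply remarks that ``by a completely similar method'' to the classical Hilbert-space projection theorem one can prove the lemma. You have carried out that transcription carefully, and in particular you correctly isolate the one genuinely new ingredient in the $L^{0}$-setting, namely that downward directedness of $\{\|x-g\|:g\in G\}$ (via finite stability of $G$ and the identity $\|I_{a}u+I_{a^{c}}v\|=I_{a}\|u\|+I_{a^{c}}\|v\|$) is what guarantees an a.s.-decreasing minimizing sequence for the lattice infimum.
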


\par In Lemma \ref{lemm.projection}, since $G$ is $\sigma$-stable and $P_{G}$ is $L^{0}$-Lipschitzian, it follows that $P_{G}$ is $\sigma$-stable by \cite[Lemma 2.11]{GZWG2020}.

\par We can now prove Theorem \ref{thm.Brouwer}.

\begin{proof}[Proof of Theorem \ref{thm.Brouwer}]
	Since $G$ is $a.s.$ bounded, there exists an $L^{0}$-simplex $S$ in $L^0(\mathcal{F},\mathbb{R}^d)$ such that $G\subseteq S$. 
	Since $L^0(\mathcal{F},\mathbb{R}^d)$ is a $\mathcal{T}_{\varepsilon,\lambda}$-complete random inner product module, let $h:S\rightarrow G$ be the restriction of 
	$P_{G}$ to $S$, where $P_{G}$ is the random projection of $L^0(\mathcal{F},\mathbb{R}^d)$ onto $G$. It is clear that $h$ is $\mathcal{T}_{c}$-continuous.

	\par 
	Consider the mapping $\hat{f}: S \rightarrow G\subseteq S$ defined by 
	$$\hat{f}(x)= (f \circ h)(x), \forall x\in S.$$ 
	Since $f$ is $\sigma$-stable and $\mathcal{T}_{c}$-continuous, it follows that $\hat{f}$ is also $\sigma$-stable and $\mathcal{T}_{c}$-continuous.  By Remark \ref{rmk.continuous},
	$\hat{f}$ is $\sigma$-stable and random sequentially continuous, then, by Lemma \ref{lemm.Brouwer} there exists $z\in S$ such that $\hat{f}(z)=z$, 
	which implies that $z\in G$. By the definition of $h$,  we have $h(z)=z$, and hence $f(z)=z$.
\end{proof}

\par Proposition \ref{prop.Brouwer} below was first established as \cite[Lemma 4.7]{GWXYC2025}, which is merely a reformulation of 
Theorem \ref{thm.Brouwer} in the case of a finitely generated $RN$ module. For the proof of Proposition \ref{prop.Brouwer}, we refer the reader to \cite{GWXYC2025}.

\begin{proposition}\label{prop.Brouwer}
	Let $G$ be an $L^0$-convex, $\mathcal{T}_{\varepsilon,\lambda}$-closed and $a.s.$ bounded subset of a finitely 
	generated $RN$ module $(E,\|\cdot\| ) $ over $\mathbb{K}$ with base $(\Omega, \mathcal{F}, P)$ and $f: G \rightarrow G$ 
	a $\sigma$-stable and random sequentially continuous mapping, where we recall that $E$ is finitely generated 
	if there exists a finite subset of $E$, denoted by $\{x_1,x_2, \cdots, x_d\}$ for some $d\in \mathbb{N}$, 
	such that $E= \{\sum^d_{i=1} \xi_i \cdot x_i: \xi_i\in L^0(\mathcal{F}, \mathbb{K}) $ for each $i\in \{1,2, \cdots, d\}\}$. 
	Then $f$ has a fixed point.
\end{proposition}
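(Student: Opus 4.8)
The plan is to reduce Proposition~\ref{prop.Brouwer} to Theorem~\ref{thm.Brouwer} (equivalently Proposition~\ref{prop.2.2}) by transporting the data $(G,f)$ into the $\mathcal{T}_{\varepsilon,\lambda}$-complete random inner product module $L^{0}(\mathcal{F},\mathbb{K}^{d})$ of Example~\ref{ex.RN module}. The point requiring care is that a general finitely generated $RN$ module $E$ need not carry an $L^{0}$-inner product, so the random projection (Lemma~\ref{lemm.projection}) used in the proof of Theorem~\ref{thm.Brouwer} is \emph{not} available on $E$ itself; hence I first realize $E$ as a $\mathcal{T}_{\varepsilon,\lambda}$-homeomorphic copy of a $\mathcal{T}_{\varepsilon,\lambda}$-closed $L^{0}$-submodule of $L^{0}(\mathcal{F},\mathbb{K}^{d})$, where the projection does exist. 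Throughout I regard $L^{0}(\mathcal{F},\mathbb{K}^{d})$ as $L^{0}(\mathcal{F},\mathbb{R}^{d'})$ with $d'=d$ if $\mathbb{K}=\mathbb{R}$ and $d'=2d$ if $\mathbb{K}=\mathbb{C}$, so that Theorem~\ref{thm.Brouwer} applies verbatim.

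First I would introduce the canonical $L^{0}$-linear surjection $\Phi:L^{0}(\mathcal{F},\mathbb{K}^{d})\to E$, $\Phi(\xi_{1},\dots,\xi_{d})=\sum_{i=1}^{d}\xi_{i}x_{i}$, where $\{x_{1},\dots,x_{d}\}$ generates $E$. The estimate $\|\Phi(\xi)-\Phi(\eta)\|\le\sum_{i=1}^{d}|\xi_{i}-\eta_{i}|\,\|x_{i}\|$ shows $\Phi$ is $\mathcal{T}_{\varepsilon,\lambda}$-continuous and $\sigma$-stable, so $K:=\ker\Phi$ is a $\mathcal{T}_{\varepsilon,\lambda}$-closed $L^{0}$-submodule. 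Since $L^{0}(\mathcal{F},\mathbb{K}^{d})$ is a $\mathcal{T}_{\varepsilon,\lambda}$-complete $RIP$ module, Lemma~\ref{lemm.projection} yields the orthogonal decomposition $L^{0}(\mathcal{F},\mathbb{K}^{d})=K\oplus K^{\perp}$, and $\Phi|_{K^{\perp}}:K^{\perp}\to E$ is an $L^{0}$-linear bijection whose inverse $s:=(\Phi|_{K^{\perp}})^{-1}$ is again $L^{0}$-linear (concretely, $s$ is the minimal-$L^{0}$-norm section, $s(x)=P_{K^{\perp}}(m)$ for any preimage $m$).

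The crux is the lower bound: \emph{there is $c\in L_{++}^{0}(\mathcal{F})$ with $\|\Phi(m)\|\ge c\,\|m\|$ for every $m\in K^{\perp}$}. I would prove this by contradiction on the random unit sphere $\Sigma=\{m\in K^{\perp}:\|m\|=1\}$, which is $\sigma$-stable; put $g=\bigwedge\{\|\Phi(m)\|:m\in\Sigma\}$ (localizing on the measure algebra to the part where $K^{\perp}\neq\{0\}$, the complementary part carrying a trivial $E$) and suppose $a:=(g=0)>0$. Approximating the $L^{0}$-infimum by a sequence $\{m_{j}\}\subseteq\Sigma$ — the relevant family is downward directed via concatenation — and invoking the randomized Bolzano--Weierstrass theorem \cite{KS2001} (random relative sequential compactness of the a.s. bounded set $\Sigma$), I extract a random subsequence $m_{j_{k}}\to m_{*}$ in $\mathcal{T}_{\varepsilon,\lambda}$; since $\Sigma$ is $\sigma$-stable and both the $L^{0}$-norm and $\Phi$ are $\mathcal{T}_{\varepsilon,\lambda}$-continuous, $\|m_{*}\|=1$ while $\|\Phi(m_{*})\|=g=0$ on $a$, so $I_{a}m_{*}\in K\cap K^{\perp}=\{0\}$, contradicting $\|m_{*}\|=1$ on $a$. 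I expect this step to be the main obstacle, since it is where compactness is genuinely used and where the $\sigma$-stability of $\Sigma$ must carry both the infimum and the norm normalization through the random subsequence. The bound gives $\|s(x)\|\le c^{-1}\|x\|$, so $s$ is $L^{0}$-Lipschitz (hence $\mathcal{T}_{\varepsilon,\lambda}$-continuous) and $s(G)$ is a.s. bounded.

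It remains to transport and conclude. Set $G_{0}:=s(G)\subseteq K^{\perp}\subseteq L^{0}(\mathcal{F},\mathbb{K}^{d})$; it is $L^{0}$-convex (as $s$ is $L^{0}$-linear), a.s. bounded, and $\mathcal{T}_{\varepsilon,\lambda}$-closed (since $s$ is a homeomorphism onto the closed submodule $K^{\perp}$ and $G$ is closed), hence $\sigma$-stable. Define $f_{0}:=s\circ f\circ(\Phi|_{K^{\perp}}):G_{0}\to G_{0}$, which is well defined because $\Phi(G_{0})=G$ and $f(G)\subseteq G$. A short check using Remark~\ref{rmk.continuous} and the $\sigma$-stability of $s$ and $\Phi$ shows $f_{0}$ is $\sigma$-stable and random sequentially continuous: if $z_{n}\to z_{0}$ in $\mathcal{T}_{\varepsilon,\lambda}$, then $\Phi(z_{n})\to\Phi(z_{0})$, random sequential continuity of $f$ supplies a random subsequence along which $f(\Phi(z_{n_{k}}))\to f(\Phi(z_{0}))$, and continuity of $s$ yields $f_{0}(z_{n_{k}})\to f_{0}(z_{0})$, the indices $n_{k}$ producing the required random subsequence of $\{z_{n}\}$ by $\sigma$-stability of $s$. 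Proposition~\ref{prop.2.2} (the $\mathcal{T}_{\varepsilon,\lambda}$-form of Theorem~\ref{thm.Brouwer}) then provides $z_{0}\in G_{0}$ with $f_{0}(z_{0})=z_{0}$; applying $\Phi$ and using $\Phi\circ s=\mathrm{id}_{G}$ gives $f(\Phi(z_{0}))=\Phi(z_{0})$, so $z:=\Phi(z_{0})\in G$ is the desired fixed point of $f$.
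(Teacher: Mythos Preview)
Your proposal is correct. The paper does not supply its own detailed proof of Proposition~\ref{prop.Brouwer}; it merely records that the result ``is merely a reformulation of Theorem~\ref{thm.Brouwer} in the case of a finitely generated $RN$ module'' and refers the reader to Lemma~4.7 of \cite{GWXYC2025}. Your argument makes that reduction explicit: you realize $E$ as an $L^{0}$-linear $\mathcal{T}_{\varepsilon,\lambda}$-homeomorphic copy of the orthogonal complement $K^{\perp}$ of $\ker\Phi$ inside the complete $RIP$ module $L^{0}(\mathcal{F},\mathbb{K}^{d})$ (using Lemma~\ref{lemm.projection} for the decomposition), transport $(G,f)$ to $(G_{0},f_{0})$ there, and invoke Proposition~\ref{prop.2.2}. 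This is precisely the kind of transfer the paper has in mind, and every step you outline goes through.

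The only place to tighten the write-up is the localization you flag in passing: let $b=\bigvee_{x\in E}(\|x\|>0)$; on $b^{c}$ one has $I_{b^{c}}E=\{0\}$, hence $I_{b^{c}}G=\{0\}$ and any element of $G$ is already fixed there, while on $b$ your random-sphere infimum argument (directedness via concatenation, randomized Bolzano--Weierstrass, $\mathcal{T}_{\varepsilon,\lambda}$-continuity of $\Phi$ and of the $L^{0}$-norm) yields $g>0$ on $b$. Taking $c=I_{b}\,g+I_{b^{c}}\cdot 1\in L_{++}^{0}(\mathcal{F})$ gives the global Lipschitz bound for $s$, so $G_{0}=s(G)$ is a.s.\ bounded and $\mathcal{T}_{\varepsilon,\lambda}$-closed in $L^{0}(\mathcal{F},\mathbb{K}^{d})$; the fixed point obtained from Proposition~\ref{prop.2.2} on $G_{0}$ is then concatenated with the trivial fixed point on $b^{c}$ using the $\sigma$-stability of $G$ (implicit in the hypothesis that $f$ is $\sigma$-stable and random sequentially continuous). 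With that bookkeeping in place your proof is complete and matches the paper's intended route.
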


\section{The equivalence between Proposition \ref{prop.2.3} and Proposition \ref{prop.2.4}}\label{sec.5}

The main purpose of this section is to establish the equivalence between Proposition \ref{prop.2.3} and Proposition \ref{prop.2.4}. In fact, that Proposition \ref{prop.2.3} implies  Proposition \ref{prop.2.4} is obvious, but the reverse implication is another matter. Owing to the profound result of Ponosov \cite{Ponosov1987}, which is the solution to a Nemytskij conjecture (see also Lemma \ref{lemm.5.2} below), we can prove the reverse implication. It is in proving the reverse implication that we also obtain the following two somewhat surprising by-products --- Theorem \ref{thm.equivalent2} and Proposition \ref{prop.5.8}. Theorem \ref{thm.equivalent2} shows that both Proposition \ref{prop.2.3} and Proposition \ref{prop.2.4} are equivalent to the random version of the original Brouwer fixed point theorem (namely, part (2) of Theorem \ref{thm.equivalent2}, whose proof only needs the use of the measurable selection theorem --- \cite[Theorem 4.2]{Wagner1977}). Proposition \ref{prop.5.8} shows that the $\mathcal{T}_{\varepsilon,\lambda}$-continuity and a.s. sequential continuity coincide for a $\sigma$-stable mapping from a $\mathcal{T}_{\varepsilon,\lambda}$-closed $L^{0}$-convex subset of $L^{0}(\mathcal{F},\mathbb{R}^{d})$ to $L^{0}(\mathcal{F},\mathbb{R}^{d})$.
 

\begin{lemma}\label{lemm.5.1}
	Let $G$ be a $\sigma$-stable subset of $L^{0}(\mathcal{F},\mathbb{R}^{d})$. Then a mapping $h:G\rightarrow G$ is local iff $h$ is $\sigma$-stable.
\end{lemma}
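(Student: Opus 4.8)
The plan is to prove the two implications separately, exploiting in one direction the finite concatenation along the two-block partition $\{A,A^{c}\}$, and in the other direction the regularity of $L^{0}(\mathcal{F},\mathbb{R}^{d})$, which allows one to recover a countable concatenation from its restrictions to each block of a partition of unity.

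First, to show that a $\sigma$-stable $h$ is local, I would take $x,y\in G$ and $A\in\mathcal{F}$ with $\tilde{I}_{A}x=\tilde{I}_{A}y$ (the equivalence-class form of ``$x^{0}=y^{0}$ a.s.\ on $A$''). Using the partition $\{A,A^{c}\}$ of $\Omega$ and the $\sigma$-stability of $G$, I form $z:=\tilde{I}_{A}x+\tilde{I}_{A^{c}}y\in G$. Since $\tilde{I}_{A}x=\tilde{I}_{A}y$, one checks immediately that $z=\tilde{I}_{A}y+\tilde{I}_{A^{c}}y=y$. Applying the $\sigma$-stability of $h$ to $z$ then gives $h(y)=h(z)=\tilde{I}_{A}h(x)+\tilde{I}_{A^{c}}h(y)$, and multiplying through by $\tilde{I}_{A}$ (and using $\tilde{I}_{A}\tilde{I}_{A^{c}}=0$) yields $\tilde{I}_{A}h(y)=\tilde{I}_{A}h(x)$, which is exactly locality on $A$.

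For the converse, assume $h$ is local and fix a sequence $\{x_{n},n\in\mathbb{N}\}$ in $G$ together with a countable partition $\{A_{n},n\in\mathbb{N}\}$ of $\Omega$. Set $x:=\sum_{n=1}^{\infty}\tilde{I}_{A_{n}}x_{n}$, which lies in $G$ by $\sigma$-stability. The key elementary observation is that $\tilde{I}_{A_{n}}x=\tilde{I}_{A_{n}}x_{n}$ for every $n$, since $\tilde{I}_{A_{n}}\tilde{I}_{A_{m}}=0$ whenever $m\neq n$. Locality, applied with $A=A_{n}$, then gives $\tilde{I}_{A_{n}}h(x)=\tilde{I}_{A_{n}}h(x_{n})$ for each $n$. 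Writing $w:=\sum_{n=1}^{\infty}\tilde{I}_{A_{n}}h(x_{n})$ (which is well defined in $G$ because the $h(x_{n})$ lie in the $\sigma$-stable set $G$), we have $\tilde{I}_{A_{n}}w=\tilde{I}_{A_{n}}h(x_{n})=\tilde{I}_{A_{n}}h(x)$ for all $n$, so by the regularity of $L^{0}(\mathcal{F},\mathbb{R}^{d})$ I conclude $h(x)=w$, which is the required $\sigma$-stability identity.

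The two computations above are short, so there is no serious analytic obstacle; the only points demanding care are bookkeeping ones. Specifically, I must make sure every concatenation I write down is legitimately well defined---this rests on the $\sigma$-stability of $G$ for the inputs $x_{n}$ and on the fact that $G$, being a subset of the regular module $L^{0}(\mathcal{F},\mathbb{R}^{d})$, contains the concatenations of its own elements---and I must invoke regularity correctly to pass from the blockwise equalities $\tilde{I}_{A_{n}}h(x)=\tilde{I}_{A_{n}}h(x_{n})$ to the single identity $h(x)=\sum_{n}\tilde{I}_{A_{n}}h(x_{n})$. This regularity step is the crux of the forward direction, since it is precisely what guarantees that a countable concatenation is uniquely determined by its restrictions to the blocks of a partition of unity.
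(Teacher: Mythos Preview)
Your proposal is correct and follows essentially the same argument as the paper's proof: both directions use the two-block partition $\{A,A^{c}\}$ for $\sigma$-stable $\Rightarrow$ local, and locality applied on each block $A_{n}$ of the partition for local $\Rightarrow$ $\sigma$-stable. The only cosmetic difference is that the paper concludes the latter direction via the identity $h(x)=(\sum_{n}\tilde{I}_{A_{n}})h(x)=\sum_{n}\tilde{I}_{A_{n}}h(x_{n})$ rather than explicitly naming regularity, but this is the same step.
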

\begin{proof}
	\textbf{Necessity}. For any sequence $\{x_{n},n\in \mathbb{N}\}$ in $G$ and any countable partition $\{a_{n},n\in \mathbb{N}\}$ of unity in $B_{\mathcal{F}}$, then, according to 
	the local property of $h$, $I_{a_{n}}h(\sum_{n=1}^{\infty}I_{a_{n}}x_{n})=I_{a_{n}}h(x_{n})$, so that 
	$$h(\sum_{n=1}^{\infty}I_{a_{n}}x_{n})=(\sum_{n=1}^{\infty}I_{a_{n}}) h(\sum_{n=1}^{\infty}I_{a_{n}}x_{n})=\sum_{n=1}^{\infty}I_{a_{n}}h(x_{n}).$$
	
	\par \textbf{Sufficiency}. For any $a\in B_{\mathcal{F}}$ and any $x,y$ in $G$, from the $\sigma$-stability of $h$ one has that $h(I_{a}x+I_{a^{c}}y)=I_{a}h(x)+I_{a^{c}}h(y)$. Then, 
	when $I_{a}x=I_{a}y$, $h(y)=I_{a}h(x)+I_{a^{c}}h(y)$, so $I_{a}h(x)=I_{a}h(y)$ holds, which amounts to saying that $h$ is local.
\end{proof}

\par Lemma \ref{lemm.5.2} below is a special case of the main result of \cite{Ponosov1987}, which is sufficient for our purpose.

\begin{lemma}[\cite{Ponosov1987}]\label{lemm.5.2}
	Let $X$ and $Y$ be two complete separable metric spaces, $(\Omega,\mathcal{F},P)$ a complete probability space and $h:L^{0}(\mathcal{F},X)\rightarrow L^{0}(\mathcal{F},Y)$ a local 
	and continuous-in-probability operator. Then there exists a Carath\'{e}odory function $f:\Omega\times X\rightarrow Y$ (namely,  $f(\cdot,x)$ is measurable for any $x\in X$ and $f(\omega,\cdot)$
	is continuous for any $\omega\in \Omega$) such that $h=h_{f}$. Here, $h_{f}:L^{0}(\mathcal{F},X)\rightarrow L^{0}(\mathcal{F},Y)$ is defined by $h_{f}(x)=$ the equivalence class of $f(\cdot,x^{0}(\cdot))$, 
	where $x^{0}$ is an arbitrarily chosen representative of $x$.
\end{lemma}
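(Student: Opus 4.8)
The plan is to realize $h$ as a Nemytskii (superposition) operator by reconstructing its generating kernel $f$ from the action of $h$ on constant and simple functions, and then extending by continuity while carefully tracking null sets. First I would fix a countable dense set $D=\{x_{k}:k\in\mathbb{N}\}$ of the separable space $X$. For each $k$ the constant class $\tilde{x}_{k}\equiv x_{k}$ belongs to $L^{0}(\mathcal{F},X)$, so $h(\tilde{x}_{k})\in L^{0}(\mathcal{F},Y)$; choosing representatives $g_{k}\colon\Omega\to Y$ gives a tentative kernel on the dense set by $f(\omega,x_{k}):=g_{k}(\omega)$. The locality of $h$ immediately pins down its action on $D$-valued simple functions: if $s=\sum_{n}I_{a_{n}}x_{k_{n}}$ with $\{a_{n},n\in\mathbb{N}\}\in p(1)$, then $s$ agrees with $\tilde{x}_{k_{n}}$ on each $a_{n}$, whence $h(s)$ agrees with $g_{k_{n}}$ there, i.e. $h(s)=\sum_{n}I_{a_{n}}g_{k_{n}}=h_{f}(s)$.

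The crux is to show that, off a single $P$-null set, the partial kernel $x_{k}\mapsto g_{k}(\omega)$ admits a continuous extension to all of $X$. Continuity in probability supplies the raw estimate: whenever two dense points are close in $X$, the corresponding constants are close in probability, hence $g_{k}$ and $g_{l}$ are close in probability. I would upgrade these convergence-in-probability estimates to almost-everywhere statements by a diagonal and exhaustion argument carried out simultaneously over the countably many pairs drawn from $D$: pass to suitable subsequences, and use locality to glue the resulting good sets into one full-measure set $\Omega_{0}$. On $\Omega_{0}$ one checks that, for every $x\in X$ and every $D$-sequence $x_{k_{j}}\to x$, the images $g_{k_{j}}(\omega)$ are Cauchy in the complete space $Y$ with a limit independent of the approximating sequence; this limit defines $f(\omega,x)$ and renders $f(\omega,\cdot)$ continuous.

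Measurability of $\omega\mapsto f(\omega,x)$ for each fixed $x\in X$ then follows because $f(\omega,x)=\lim_{j}g_{k_{j}}(\omega)$ for any $D$-sequence $x_{k_{j}}\to x$, exhibiting it as a pointwise limit of $\mathcal{F}$-measurable maps; thus $f$ is a Carath\'{e}odory function. Finally, to verify $h=h_{f}$ on all of $L^{0}(\mathcal{F},X)$, I would approximate an arbitrary $x$ by $D$-valued simple functions $s_{m}\to x$ in probability; continuity in probability gives $h(s_{m})\to h(x)$, while the continuity of $f(\omega,\cdot)$ together with the simple-function identity gives $h_{f}(s_{m})\to h_{f}(x)$, and the two limits coincide because $h(s_{m})=h_{f}(s_{m})$ was already established.

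The main obstacle is the second step: convergence in probability does not imply almost-everywhere convergence, so producing one full-measure set $\Omega_{0}$ on which continuity of $x\mapsto f(\omega,x)$ holds for \emph{all} approaching sequences at once is delicate. The completeness of $(\Omega,\mathcal{F},P)$, the separability of $X$ (to reduce control to countably many pairs), the completeness of $Y$ (to extend from the dense set), and especially the locality of $h$ (to patch the exceptional sets consistently) are precisely the ingredients that make this gluing possible, which is why the hypotheses are stated as they are.
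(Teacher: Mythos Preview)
The paper does not prove this lemma: it is quoted as a special case of the main result of \cite{Ponosov1987} and invoked as a black box throughout Section~\ref{sec.5}. There is therefore no in-paper argument to compare against.

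Your outline is the natural Nemytskii-reconstruction strategy and matches the architecture of Ponosov's proof: read the kernel off on a countable dense $D\subset X$ from the action of $h$ on constants, use locality to identify $h$ with $h_f$ on $D$-valued simple functions, extend $f(\omega,\cdot)$ from $D$ to $X$ by continuity, then approximate a general $x$ by simple functions and pass to the limit in probability. You also locate the crux correctly. However, the step ``pass to suitable subsequences and use locality to glue the resulting good sets into one full-measure set $\Omega_0$'' is not yet an argument. Continuity in probability at constants tells you only that for each \emph{fixed} pair $(k,l)$ the probability $P\{d_Y(g_k,g_l)\ge\varepsilon\}$ is small once $d_X(x_k,x_l)$ is small; but uniform continuity of $x_k\mapsto g_k(\omega)$ on $D$ for a.e.\ $\omega$ requires controlling the measure of $\{\omega:\exists\,k,l,\ d_X(x_k,x_l)<\delta,\ d_Y(g_k(\omega),g_l(\omega))\ge\varepsilon\}$ as $\delta\downarrow 0$, and at every scale there are infinitely many pairs contributing, so no diagonal or countable-intersection argument over pairs suffices on its own. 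Closing this gap is precisely the content of Ponosov's theorem: locality is used not merely to tidy null sets but to assemble, on the hypothetical bad set, the $\omega$-dependent witnessing pairs into $D$-valued measurable inputs $u_\delta,v_\delta$ with $d_X(u_\delta,v_\delta)<\delta$ everywhere yet $d_Y(h(u_\delta),h(v_\delta))\ge\varepsilon$ on a set of measure bounded away from zero, and then to derive a contradiction to continuity in probability. Your proposal names the right hypotheses and their roles but stops short of this mechanism, so there is a genuine gap exactly where you flag it; the remaining steps of the outline are sound.
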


\par  In fact, Lemma \ref{lemm.5.2} has implied that $h$ is a.s. sequentially continuous, namely, $\{h(x_{n}),n\in \mathbb{N}\}$ converges a.s. to $h(x)$ 
whenever $\{x_{n},n\in \mathbb{N}\}$ converges a.s. to $x$.

\begin{lemma}\label{lemm.5.3}
	Let $U:\Omega\rightarrow 2^{\mathbb{R}^{d}}$ be the same as in Proposition \ref{prop.2.4}, then $G_{U}:=\{x\in L^0(\mathcal{F},\mathbb{R}^d):x$ has 
	a representative $x^{0}$ such that $x^{0}(\omega)\in U(\omega)~a.s.\}$ is an a.s. bounded $\mathcal{T}_{\varepsilon,\lambda}$-closed $L^{0}$-convex 
	subset of $L^0(\mathcal{F},\mathbb{R}^d)$.
\end{lemma}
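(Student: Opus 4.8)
The plan is to verify the three asserted properties of $G_{U}$ in turn, starting with $L^{0}$-convexity (the most elementary), then a.s. boundedness, and finally $\mathcal{T}_{\varepsilon,\lambda}$-closedness. Throughout I fix, for each $x\in G_{U}$, a representative $x^{0}$ with $x^{0}(\omega)\in U(\omega)$ a.s., which exists by the very definition of $G_{U}$ (and $G_{U}\neq\emptyset$ because a measurable multifunction with nonempty closed values admits a measurable selection, e.g. by the selection theorem of \cite{Wagner1977}). For $L^{0}$-convexity I would take $x,y\in G_{U}$ and $\lambda\in L_{+}^{0}(\mathcal{F})$ with $\lambda\leq 1$, choose representatives $x^{0},y^{0}$ landing in $U(\cdot)$ a.s. together with a representative $\lambda^{0}$ satisfying $0\leq\lambda^{0}(\omega)\leq 1$ a.s., and form the pointwise combination $z^{0}(\omega)=\lambda^{0}(\omega)x^{0}(\omega)+(1-\lambda^{0}(\omega))y^{0}(\omega)$. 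Since every $U(\omega)$ is convex, $z^{0}(\omega)\in U(\omega)$ a.s.; as $z^{0}$ is a representative of $\lambda x+(1-\lambda)y$, this yields $\lambda x+(1-\lambda)y\in G_{U}$.

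For a.s. boundedness the key object is the radial function $r(\omega):=\sup\{|v|:v\in U(\omega)\}$. Because each $U(\omega)$ is bounded, $r(\omega)<\infty$ for every $\omega$, so the only real issue is the $\mathcal{F}$-measurability of $r$, which is where the measurable-graph hypothesis is used. I would obtain it from a Castaing representation: there is a countable family $\{f_{n},n\in\mathbb{N}\}$ of measurable selections of $U$ with $U(\omega)=\overline{\{f_{n}(\omega):n\in\mathbb{N}\}}$ for each $\omega$, whence $r=\bigvee_{n}|f_{n}|$ is measurable. Thus $r\in L_{+}^{0}(\mathcal{F})$, and since every $x\in G_{U}$ satisfies $\|x\|=|x^{0}|\leq r$ a.s., it follows that $\bigvee\{\|x\|:x\in G_{U}\}\leq r\in L_{+}^{0}(\mathcal{F})$, i.e. $G_{U}$ is a.s. bounded.

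For $\mathcal{T}_{\varepsilon,\lambda}$-closedness I would exploit the metrizability of $\mathcal{T}_{\varepsilon,\lambda}$ and the fact that convergence in probability always has an a.s. convergent subsequence. Given a sequence $\{x_{n},n\in\mathbb{N}\}$ in $G_{U}$ with $x_{n}\to x$ in $\mathcal{T}_{\varepsilon,\lambda}$, I choose representatives $x_{n}^{0}\in U(\cdot)$ a.s. and extract a subsequence with $x_{n_{k}}^{0}\to g$ a.s. Then $g$ is a representative of $x$ (a.s. convergence implies convergence in probability, and limits in probability are a.s. unique), and since each $U(\omega)$ is closed while $x_{n_{k}}^{0}(\omega)\in U(\omega)$ a.s., the a.s. limit satisfies $g(\omega)\in U(\omega)$ a.s. Hence $x\in G_{U}$, proving $G_{U}$ is $\mathcal{T}_{\varepsilon,\lambda}$-closed.

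The main obstacle is the measurability of $r$ in the a.s.-boundedness step, the one place where the measurable-graph assumption on $U$ is genuinely invoked, via the measurable selection / Castaing representation machinery. If one wishes to avoid any completeness assumption on $(\Omega,\mathcal{F},P)$, the same device used for Proposition \ref{prop.2.4} applies: first carry out the selection argument on the completion $(\Omega,\hat{\mathcal{F}},P)$, and then replace the resulting $\hat{\mathcal{F}}$-measurable selections by a.s.-equal $\mathcal{F}$-measurable functions, so that $r$ is (a.s.) an element of $L_{+}^{0}(\mathcal{F})$.
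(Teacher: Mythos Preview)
Your proof is correct and essentially follows the paper's own argument. The paper declares the $L^{0}$-convexity and $\mathcal{T}_{\varepsilon,\lambda}$-closedness ``obvious'' and concentrates on a.s.\ boundedness, for which it actually gives two methods; your argument via a Castaing representation on the completion $\hat{\mathcal{F}}$ (invoking \cite{Wagner1977}) and then descending to $\mathcal{F}$ is precisely the paper's Method~2. The paper's Method~1 is worth noting as a slightly more self-contained alternative: one defines $r(\omega)=\sup\{|v|:v\in U(\omega)\}$ only as a pointwise (not necessarily measurable) bound, takes $\xi^{0}=\mathrm{ess.sup}\{|x^{0}|:x^{0}\in G_{U}^{0}\}$, which is automatically measurable, and observes $\xi^{0}\leq r$ a.s., so $\xi^{0}$ is real-valued --- thereby avoiding any selection theorem.
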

\begin{proof}
	For clarity, we use $|\cdot|$ for the usual Euclidean norm on $\mathbb{R}^{d}$ and reserve $\|\cdot\|$ for the $L^{0}$-norm on $L^0(\mathcal{F},\mathbb{R}^d)$.
	We employ two methods to prove this lemma as follows. We only need to prove that $G_{U}$ is a.s. bounded since the others are obvious.
	
	\par \textbf{Method 1}. First, define a real-valued function $r:\Omega\rightarrow [0,+\infty)$ by $r(\omega)=\sup\{|x|:x\in U(\omega)\}$ for any $\omega\in \Omega$. Then $r$ is well-defined since each $U(\omega)$ is bounded. Now, let $G_{U}^{0}=\{x^{0}:x^{0}$ is an $\mathbb{R}^{d}$-valued $\mathcal{F}$-measurable function on $\Omega$
	such that $x^{0}(\omega)\in U(\omega)~a.s.\}$. Then $|x^{0}(\omega)|\leq r(\omega)$ a.s. for each $x^{0}\in G_{U}^{0}$. Further, let $\xi^{0}=ess.sup\{|x^{0}|:x^{0}\in G_{U}^{0}\}$,
	see, for example, \cite{Guo2010,GWXYC2025} for the notion of an essential supremum for a set of extended real-valued random variables. Then it still holds that $\xi^{0}(\omega)\leq r(\omega)$ a.s., so we can assume that $\xi^{0}$ is a real-valued random variable. Then $\bigvee\{\|x\|:x\in G_{U}\}\leq \xi$, where $\xi$ is the equivalence class of $\xi^{0}$, namely, $G_{U}$ is a.s. bounded.
	
	\par \textbf{Method 2}. Let $\hat{\mathcal{F}}$ be the completion of $\mathcal{F}$ with respect to $P$. Then the graph $Gr(U)\in \hat{\mathcal{F}}\otimes Bor(\mathbb{R}^{d})$ and hence by \cite[Theorem 4.2]{Wagner1977} there exists a sequence $\{x_{n}^{0},n\in \mathbb{N}\}$ of $\hat{\mathcal{F}}$-measurable selections of $U$ such that $U(\omega)=\overline{\{x_{n}^{0}(\omega):n\in \mathbb{N}\}}$ 
	for each $\omega\in \Omega$. Let $r^{0}:\Omega\rightarrow [0,+\infty)$ be the function defined by $r^{0}(\omega)=\sup\{|x_{n}^{0}(\omega)|:n\in \mathbb{N}\}$ for any $\omega\in \Omega$, then $r^{0}$ is 
	$\hat{\mathcal{F}}$-measurable and hence there exists an $\mathcal{F}$-measurable nonnegative random variable $r_{1}^{0}$ such that $r^{0}(\omega)=r_{1}^{0}(\omega)$ a.s. Further, let $r_{1}$ be the 
	equivalence class of $r_{1}^{0}$. Then it is clear that $\bigvee\{\|x\|:x\in G_{U}\}\leq r_{1}$; $G_{U}$ is again a.s. bounded.
\end{proof}

\begin{theorem}\label{thm.equivalent1}
	Proposition \ref{prop.2.3} is equivalent to Proposition \ref{prop.2.4}.
\end{theorem}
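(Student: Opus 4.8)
The plan is to establish the two implications separately. Since Proposition \ref{prop.2.4} is exactly the instance of Proposition \ref{prop.2.3} obtained by specializing the domain to a set of the form $G_{U}$, the implication from Proposition \ref{prop.2.3} to Proposition \ref{prop.2.4} should be a short verification. The reverse implication is the substantive one: it amounts to showing that \emph{every} set $G$ admissible in Proposition \ref{prop.2.3} can itself be realized as $G_{U}$ for a suitable multifunction $U$, after which the self-map $f$ automatically becomes a local, continuous-in-probability operator on $G_{U}$ and Proposition \ref{prop.2.4} applies verbatim.

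First I would prove that Proposition \ref{prop.2.3} implies Proposition \ref{prop.2.4}. Let $U$ be as in the hypothesis of Proposition \ref{prop.2.4}. By Lemma \ref{lemm.5.3}, $G_{U}$ is an a.s. bounded, $\mathcal{T}_{\varepsilon,\lambda}$-closed and $L^{0}$-convex subset of $L^{0}(\mathcal{F},\mathbb{R}^{d})$, and consequently it is $\sigma$-stable. Now let $h:G_{U}\rightarrow G_{U}$ be local and continuous in probability. By Lemma \ref{lemm.5.1}, the locality of $h$ on the $\sigma$-stable set $G_{U}$ is equivalent to $\sigma$-stability of $h$; and since continuity in probability coincides with $\mathcal{T}_{\varepsilon,\lambda}$-continuity, $h$ is $\mathcal{T}_{\varepsilon,\lambda}$-continuous. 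Thus $G_{U}$ and $h$ satisfy all the hypotheses of Proposition \ref{prop.2.3}, which produces a fixed point of $h$. This direction merely re-reads the data of Proposition \ref{prop.2.4} in the language of Proposition \ref{prop.2.3}, and requires no new ideas.

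For the reverse implication, Proposition \ref{prop.2.4} implies Proposition \ref{prop.2.3}, the plan is to represent $G$ as some $G_{U}$. Given an a.s. bounded, $\mathcal{T}_{\varepsilon,\lambda}$-closed and $L^{0}$-convex (hence $\sigma$-stable) set $G$, I would fix a $\mathcal{T}_{\varepsilon,\lambda}$-dense sequence $\{x_{m},m\in\mathbb{N}\}$ in $G$, which exists because $(L^{0}(\mathcal{F},\mathbb{R}^{d}),\mathcal{T}_{\varepsilon,\lambda})$ is separable and metrizable, choose representatives $x_{m}^{0}$, and set $U(\omega)=\overline{\{x_{m}^{0}(\omega):m\in\mathbb{N}\}}$. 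Then $U$ has a measurable graph (a countable union of graphs, followed by closure), each $U(\omega)$ is nonempty and closed, and the a.s. boundedness of $G$ forces $\sup_{m}|x_{m}^{0}(\omega)|<+\infty$ a.s. (exactly as in the proof of Lemma \ref{lemm.5.3}), so after discarding a null set one may take $U(\omega)$ bounded for every $\omega$; convexity of each $U(\omega)$ is to be deduced from the $L^{0}$-convexity of $G$. Once the identity $G=G_{U}$ is in hand, the given $\sigma$-stable $\mathcal{T}_{\varepsilon,\lambda}$-continuous map $f:G\rightarrow G$ is, by Lemma \ref{lemm.5.1} and the coincidence of continuity in probability with $\mathcal{T}_{\varepsilon,\lambda}$-continuity, a local continuous-in-probability self-map of $G_{U}$, so Proposition \ref{prop.2.4} yields the desired fixed point.

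The hard part will be the identity $G=G_{U}$, and specifically the inclusion $G_{U}\subseteq G$: one must show that every a.s. selection of $U$ already belongs to $G$. This is a decomposability statement, and it is here that the structural hypotheses on $G$ do the work — $\sigma$-stability supplies the countable concatenations, $\mathcal{T}_{\varepsilon,\lambda}$-closedness absorbs the pointwise limits created by the closure operation defining $U$, and a measurable selection theorem (Wagner's theorem, as already invoked in the proof of Lemma \ref{lemm.5.3}) allows an arbitrary selection of $U$ to be approximated, on a measurable exhaustion of $\Omega$, by concatenations of the $x_{m}$, which lie in $G$ by $\sigma$-stability. The pointwise convexity of $U(\omega)$ is the other delicate point, since a genuine pointwise convex combination of values $x_{m}^{0}(\omega),x_{k}^{0}(\omega)$ must be matched with an $L^{0}$-convex combination $tx_{m}+(1-t)x_{k}\in G$ realizing it at $\omega$. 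I would expect this measurability-and-concatenation bookkeeping, rather than the invocation of the fixed point principle, to be the technical heart of the argument; Ponosov's representation (Lemma \ref{lemm.5.2}) enters naturally on the continuity side, giving a Carathéodory description of the operators involved and thereby underpinning the a.s. sequential continuity refinement recorded in Proposition \ref{prop.5.8}.
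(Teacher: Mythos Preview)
Your forward implication (Proposition \ref{prop.2.3} $\Rightarrow$ Proposition \ref{prop.2.4}) is correct and coincides with the paper's argument.

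For the reverse implication your route is genuinely different from the paper's, and it carries a real gap. The paper does \emph{not} attempt to realize the given $G$ as some $G_{U}$. Instead it fixes the single constant multifunction $U(\omega)=\bar{B}_{1}(\mathbb{R}^{d})$, so that $G_{U}=L^{0}(\mathcal{F},\bar{B}_{1}(\mathbb{R}^{d}))=:V$, rescales $G$ into $V$ by choosing $\xi\in L^{0}_{++}(\mathcal{F})$ with $\xi^{-1}G\subseteq V$, and then applies Proposition \ref{prop.2.4} to the composite
\[
h \;=\; T_{\xi^{-1}}\circ f\circ T_{\xi}\circ P_{V,\xi^{-1}G}\;:\;V\longrightarrow V,
\]
where $P_{V,\xi^{-1}G}$ is the restriction to $V$ of the random projection of Lemma \ref{lemm.projection}. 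A fixed point $v$ of $h$ is then shown to lie in $\xi^{-1}G$, and $g=\xi v$ is the desired fixed point of $f$. No representation of $G$ as a selection set, no fiberwise convexity argument, and no measurable-selection bookkeeping are needed.

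The gap in your plan is the sentence ``which exists because $(L^{0}(\mathcal{F},\mathbb{R}^{d}),\mathcal{T}_{\varepsilon,\lambda})$ is separable''. The paper imposes no separability hypothesis on $(\Omega,\mathcal{F},P)$, and for a nonseparable probability space $L^{0}(\mathcal{F},\mathbb{R}^{d})$ need not be separable, so a $\mathcal{T}_{\varepsilon,\lambda}$-dense sequence in $G$ may fail to exist. Your construction of $U$, and with it the identity $G=G_{U}$, therefore does not go through as written. The approach is salvageable --- for instance, replace your dense sequence by the countable family $\{P_{G}(q):q\in\mathbb{Q}^{d}\}$ together with its rational convex combinations, which lies in $G$ by $L^{0}$-convexity and whose pointwise closures do recover $G$ --- but this is additional work you have not done, and even then you must still carry out the fiberwise convexity and $G_{U}\subseteq G$ verifications you flagged as delicate. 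The paper's scaling-and-projection trick bypasses all of this at the cost of one invocation of Lemma \ref{lemm.projection}; your route, once repaired, would buy the stronger structural statement that every admissible $G$ is already of the form $G_{U}$, but at a noticeably higher technical price.
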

\begin{proof}
	Proposition \ref{prop.2.3} $\Rightarrow$ Proposition \ref{prop.2.4} is clear by Lemma \ref{lemm.5.1} and the fact that the $\mathcal{T}_{\varepsilon,\lambda}$-continuity of $h$ is exactly the 
	continuity-in-probability of $h$.
	
	\par For the reverse implication, for any $\xi\in L_{++}^{0}(\mathcal{F})$, let $T_{\xi}$ be the isomorphism of $L^0(\mathcal{F},\mathbb{R}^d)$ to itself, defined by $T_{\xi}(x)=\xi x$ for any $x\in L^0(\mathcal{F},\mathbb{R}^d)$. Then it is also clear that $T_{\xi}^{-1}=T_{\xi^{-1}}$. Now, let $G$ and $f$ be the same as assumed in Proposition \ref{prop.2.3}. Then there exists some $\xi\in L_{++}^{0}(\mathcal{F})$ such that $\xi^{-1}G\subseteq L^{0}(\mathcal{F},\bar{B}_{1}(\mathbb{R}^{d}))$, where $\bar{B}_{1}(\mathbb{R}^{d})=\{y\in \mathbb{R}^{d}:|y|\leq 1\}$ and $|\cdot|$ stands for the Euclidean norm on $\mathbb{R}^{d}$. For brevity, we denote $L^{0}(\mathcal{F},\bar{B}_{1}(\mathbb{R}^{d}))$ by $V$, then $V$ is an a.s. bounded, $\mathcal{T}_{\varepsilon,\lambda}$-closed and $L^{0}$-convex subset of $L^{0}(\mathcal{F},\mathbb{R}^{d})$. Further, let $P_{V,\xi^{-1}G}$ be the restriction of $P_{\xi^{-1}G}$ to $V$, where $P_{\xi^{-1}G}$ is the random projection of $L^{0}(\mathcal{F},\mathbb{R}^{d})$ onto $\xi^{-1}G$. Then $h:=T_{\xi^{-1}}\circ f\circ T_{\xi}\circ P_{V,\xi^{-1}G}$ is a $\sigma$-stable and $\mathcal{T}_{\varepsilon,\lambda}$-continuous mapping from $V$ to $V$, and hence $h$ is also a local and continuous-in-probability operator. Now, define a set-valued function $U:\Omega\rightarrow 2^{\mathbb{R}^{d}}$ by $U(\omega)=\bar{B}_{1}(\mathbb{R}^{d})$ for each $\omega\in \Omega$. Then $U$ not only satisfies the hypothesis of Proposition \ref{prop.2.4} but also is weakly measurable in the sense of \cite{Wagner1977}, and in particular $G_{U}$ is exactly $V$. Thus, there exists $v\in V$ such that $h(v)=v$ by Proposition \ref{prop.2.4}. Then $\xi v=f(\xi P_{V,\xi^{-1}G}(v))\in G$. Again, let $g\in G$ be such that $g=\xi v$. Then $v=\xi^{-1} g\in \xi^{-1}G$, and $P_{V,\xi^{-1}G}(v)=\xi^{-1}g$. Hence $f(\xi P_{V,\xi^{-1}G}(v))=f(g)=\xi v=g$, namely, $g$ is a fixed point of $f$ in $G$.  
\end{proof}

\begin{remark}\label{rmk.5.5}
	Let $V=L^{0}(\mathcal{F},\bar{B}_{1}(\mathbb{R}^{d}))$ and $h:V\rightarrow V$ be the same as obtained in the process of the proof of that Proposition \ref{prop.2.4} implies Proposition \ref{prop.2.3}.
	If $(\Omega,\mathcal{F},P)$ is a complete probability space, then by Lemma \ref{lemm.5.2} there exists a Carath\'{e}odory function $f:\Omega\times \bar{B}_{1}(\mathbb{R}^{d})\rightarrow \bar{B}_{1}(\mathbb{R}^{d})$
	such that $h(v)=h_{f}(v)$ for any $v\in L^{0}(\mathcal{F},\bar{B}_{1}(\mathbb{R}^{d}))$. This observation leads us to the following interesting (or even somewhat surprising) result.
\end{remark}

\begin{theorem}\label{thm.equivalent2}
	Both Proposition \ref{prop.2.3} and Proposition \ref{prop.2.4} are equivalent to either of the following two assertions:
	\begin{enumerate} 
		\item [(1)] Every $\mathcal{T}_{\varepsilon,\lambda}$-continuous $\sigma$-stable mapping $h$ from $L^{0}(\mathcal{F},\bar{B}_{1}(\mathbb{R}^{d}))$ to itself has a fixed point.
		\item [(2)] Every Carath\'{e}odory function function $f:(\Omega,\mathcal{F},P)\times \bar{B}_{1}(\mathbb{R}^{d})\rightarrow \bar{B}_{1}(\mathbb{R}^{d})$ admits a random fixed point, namely, there exists a random vector $v:(\Omega,\mathcal{F},P)\rightarrow \bar{B}_{1}(\mathbb{R}^{d})$ such that $f(\omega,v(\omega))=v(\omega)$ a.s..
	\end{enumerate}
\end{theorem}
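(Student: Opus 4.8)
The plan is to prove the two equivalences $\ref{prop.2.3}\Leftrightarrow(1)$ and $(1)\Leftrightarrow(2)$; since Theorem \ref{thm.equivalent1} already gives $\ref{prop.2.3}\Leftrightarrow\ref{prop.2.4}$, these will collapse all four assertions into a single equivalence class. Write $V:=L^{0}(\mathcal{F},\bar{B}_{1}(\mathbb{R}^{d}))$, which by Lemma \ref{lemm.5.3} (applied to $U\equiv\bar{B}_{1}(\mathbb{R}^{d})$) is an a.s. bounded, $\mathcal{T}_{\varepsilon,\lambda}$-closed, $L^{0}$-convex subset of $L^{0}(\mathcal{F},\mathbb{R}^{d})$. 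I would carry out the four implications $\ref{prop.2.3}\Rightarrow(1)$, $(1)\Rightarrow\ref{prop.2.3}$, $(1)\Rightarrow(2)$ and $(2)\Rightarrow(1)$ in that order.

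The equivalence $\ref{prop.2.3}\Leftrightarrow(1)$ is the easy half. Assertion $(1)$ is nothing but the special case $G=V$ of Proposition \ref{prop.2.3}, so $\ref{prop.2.3}\Rightarrow(1)$ is immediate. For the converse $(1)\Rightarrow\ref{prop.2.3}$ I would repeat verbatim the scaling-and-projection construction from the reverse implication in the proof of Theorem \ref{thm.equivalent1}: given $G$ and $f$ as in Proposition \ref{prop.2.3}, choose $\xi\in L_{++}^{0}(\mathcal{F})$ with $\xi^{-1}G\subseteq V$ and form $h=T_{\xi^{-1}}\circ f\circ T_{\xi}\circ P_{V,\xi^{-1}G}$, which is a $\sigma$-stable $\mathcal{T}_{\varepsilon,\lambda}$-continuous self-map of $V$. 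Applying $(1)$ (in place of Proposition \ref{prop.2.4}) furnishes a fixed point of $h$, from which a fixed point of $f$ in $G$ is recovered exactly as there.

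For $(1)\Rightarrow(2)$, given a Carath\'{e}odory function $f:\Omega\times\bar{B}_{1}(\mathbb{R}^{d})\to\bar{B}_{1}(\mathbb{R}^{d})$ I would pass to its Nemytskii operator $h_{f}:V\to V$, where $h_{f}(x)$ is the equivalence class of $f(\cdot,x^{0}(\cdot))$. This $h_{f}$ is local, hence $\sigma$-stable by Lemma \ref{lemm.5.1}; and it is continuous in probability, hence $\mathcal{T}_{\varepsilon,\lambda}$-continuous, by the standard subsequence argument: from any sequence with $x_{n}\to x$ in probability one extracts an a.s. convergent subsequence, along which $f(\omega,x_{n}(\omega))\to f(\omega,x(\omega))$ a.s. by continuity of $f(\omega,\cdot)$, and boundedness of $\bar{B}_{1}(\mathbb{R}^{d})$ upgrades this to convergence in probability. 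Then $(1)$ produces $v\in V$ with $h_{f}(v)=v$, i.e. $f(\omega,v^{0}(\omega))=v^{0}(\omega)$ a.s., which is precisely the asserted random fixed point.

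The reverse implication $(2)\Rightarrow(1)$ is where the real work lies, and it is the step I expect to be the main obstacle. Given a $\mathcal{T}_{\varepsilon,\lambda}$-continuous $\sigma$-stable $h:V\to V$, it is local and continuous in probability, so Ponosov's representation (Lemma \ref{lemm.5.2}) yields a Carath\'{e}odory $f$ with $h=h_{f}$, whereupon $(2)$ supplies a random fixed point of $f$, which is exactly a fixed point of $h$. The difficulty is that Lemma \ref{lemm.5.2} presupposes $(\Omega,\mathcal{F},P)$ complete, as already flagged in Remark \ref{rmk.5.5}. I would remove this restriction by passing to the completion $(\Omega,\hat{\mathcal{F}},P)$, noting $L^{0}(\mathcal{F},\bar{B}_{1}(\mathbb{R}^{d}))=L^{0}(\hat{\mathcal{F}},\bar{B}_{1}(\mathbb{R}^{d}))$ and obtaining an $\hat{\mathcal{F}}$-Carath\'{e}odory representative $\hat{f}$ of $h$. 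Since $\bar{B}_{1}(\mathbb{R}^{d})$ is compact, the map $\omega\mapsto\hat{f}(\omega,\cdot)$ is $\hat{\mathcal{F}}$-measurable into the Polish space $C(\bar{B}_{1}(\mathbb{R}^{d}),\bar{B}_{1}(\mathbb{R}^{d}))$ and hence agrees a.s. with an $\mathcal{F}$-measurable one, producing an honest $\mathcal{F}$-Carath\'{e}odory $f$ with $h_{f}=h_{\hat{f}}=h$; applying $(2)$ to $f$ and selecting an $\mathcal{F}$-measurable representative of the fixed point (as in the completion argument preceding Proposition \ref{prop.2.4}) completes the proof.
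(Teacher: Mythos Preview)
Your proposal is correct and follows the same overall architecture as the paper's proof: reduce to the equivalence $(1)\Leftrightarrow(2)$ via Theorem \ref{thm.equivalent1}, and handle $(1)\Rightarrow(2)$ and $(2)\Rightarrow(1)$ through the Nemytskii operator $h_{f}$ and Ponosov's representation (Lemma \ref{lemm.5.2}) respectively.

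There is one genuine difference in the $(2)\Rightarrow(1)$ step. The paper, after obtaining an $\hat{\mathcal{F}}$-Carath\'{e}odory representative of $h$, simply applies $(2)$ to it and then uses $L^{0}(\hat{\mathcal{F}},\bar{B}_{1}(\mathbb{R}^{d}))=L^{0}(\mathcal{F},\bar{B}_{1}(\mathbb{R}^{d}))$ to transfer the fixed point back; this tacitly treats $(2)$ as a statement valid for the completed probability space as well. You instead insert the intermediate step of turning the $\hat{\mathcal{F}}$-Carath\'{e}odory function into an honest $\mathcal{F}$-Carath\'{e}odory one, via the identification of a Carath\'{e}odory map on a compact domain with a measurable map into the Polish space $C(\bar{B}_{1}(\mathbb{R}^{d}),\bar{B}_{1}(\mathbb{R}^{d}))$. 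This extra step makes your argument formally tighter when $(2)$ is read as a statement about the fixed space $(\Omega,\mathcal{F},P)$, at the cost of a little more work. Two minor remarks: in your $(1)\Rightarrow(2)$ paragraph the phrase ``boundedness of $\bar{B}_{1}(\mathbb{R}^{d})$ upgrades this to convergence in probability'' is unnecessary, since a.s.\ convergence already implies convergence in probability; and your subsequence argument should be phrased as ``every subsequence has a further subsequence'', which is what you clearly intend.
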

\begin{proof}
	Theorem \ref{thm.equivalent1} has shown that Proposition \ref{prop.2.3} and Proposition \ref{prop.2.4} are equivalent, while the second part of the proof of Theorem \ref{thm.equivalent1} has also shown
	that Proposition \ref{prop.2.3} is equivalent to (1). Thus, we only need to prove that (1) and (2) are equivalent.
	
	\par $(1)\Rightarrow (2)$. Let $h=h_{f}$. Then there exists some $x\in L^{0}(\mathcal{F},\bar{B}_{1}(\mathbb{R}^{d}))$ such that $h_{f}(x)=x$, and hence taking $v$ to be an arbitrarily chosen representative
	of $x$ yields $f(\omega,v(\omega))=v(\omega)$ a.s., namely, $v$ satisfies (2).
	
	\par $(2)\Rightarrow (1)$. Let $\hat{\mathcal{F}}$ be the completion of $\mathcal{F}$ with respect to $P$ and further notice the fact that $L^{0}(\hat{\mathcal{F}},\bar{B}_{1}(\mathbb{R}^{d}))=L^{0}(\mathcal{F},\bar{B}_{1}(\mathbb{R}^{d}))$ although $\hat{\mathcal{F}}$ and $\mathcal{F}$ may differ. Then by Lemma \ref{lemm.5.2} there exists some Carath\'{e}odory function $f:(\Omega,\hat{\mathcal{F}},P)\times \bar{B}_{1}(\mathbb{R}^{d})\rightarrow \bar{B}_{1}(\mathbb{R}^{d})$ such that $h=h_{f}$. By (1) there exists some $\hat{\mathcal{F}}$-measurable $v:\Omega\rightarrow \bar{B}_{1}(\mathbb{R}^{d})$ such that $f(\omega,v(\omega))=v(\omega)$ a.s.. Let $x$ be the equivalence class of $v$. Then $h(x)=h_{f}(x)=x$, since $L^{0}(\hat{\mathcal{F}},\bar{B}_{1}(\mathbb{R}^{d}))=L^{0}(\mathcal{F},\bar{B}_{1}(\mathbb{R}^{d}))$, $x$ is also regarded as a fixed point of $h$ in $L^{0}(\mathcal{F},\bar{B}_{1}(\mathbb{R}^{d}))$.
\end{proof}

\begin{remark}\label{rmk.5.7}
	The original Brouwer fixed point theorem states that every continuous function $f$ from $\bar{B}_{1}(\mathbb{R}^{d})$ to itself has a fixed point, see \cite{Brouwer1912}. (2) of Theorem \ref{thm.equivalent2}
	can be aptly called the random version of the original Brouwer fixed point, which is equivalent to either of Proposition \ref{prop.2.3} and Proposition \ref{prop.2.4}. Thus, why we can obtain such a deep
	result as Theorem \ref{thm.equivalent2} may lie in the fact that Lemma \ref{lemm.5.2} due to Ponosov is surprisingly deep! Proposition \ref{prop.5.8} below should also be credited to Lemma \ref{lemm.5.2}.  
\end{remark}

\begin{proposition}\label{prop.5.8}
	Let $G$ be a $\mathcal{T}_{\varepsilon,\lambda}$-closed $L^{0}$-convex subset of $L^{0}(\mathcal{F},\mathbb{R}^{d})$ and $T:G\rightarrow L^{0}(\mathcal{F},\mathbb{R}^{d})$ be a $\sigma$-stable $\mathcal{T}_{\varepsilon,\lambda}$-continuous mapping. Then $T$ is a.s. sequentially continuous.
\end{proposition}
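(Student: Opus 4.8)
The plan is to reduce the statement to Ponosov's representation theorem (Lemma \ref{lemm.5.2}) together with the observation recorded immediately after it, namely that any operator of Carath\'{e}odory form $h_{f}$ is a.s. sequentially continuous because $f(\omega,\cdot)$ is continuous for each $\omega$. The one genuine obstacle is that Lemma \ref{lemm.5.2} applies only to operators defined on the whole of $L^{0}(\mathcal{F},\mathbb{R}^{d})$, whereas $T$ is defined merely on the subset $G$. To overcome this I would first extend $T$ to all of $L^{0}(\mathcal{F},\mathbb{R}^{d})$ by composing it with the random projection onto $G$, and then transport the a.s. sequential continuity of the extension back to $T$.

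Assume $G$ is nonempty (otherwise the conclusion is vacuous). Since $L^{0}(\mathcal{F},\mathbb{R}^{d})$ is a $\mathcal{T}_{\varepsilon,\lambda}$-complete random inner product module and $G$ is $\mathcal{T}_{\varepsilon,\lambda}$-closed and $L^{0}$-convex, Lemma \ref{lemm.projection} yields the random projection $P_{G}:L^{0}(\mathcal{F},\mathbb{R}^{d})\rightarrow G$, which is nonexpansive, hence $\mathcal{T}_{\varepsilon,\lambda}$-continuous, and which is $\sigma$-stable by the remark following Lemma \ref{lemm.projection}. Define $\hat{T}=T\circ P_{G}:L^{0}(\mathcal{F},\mathbb{R}^{d})\rightarrow L^{0}(\mathcal{F},\mathbb{R}^{d})$. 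As a composition of $\sigma$-stable $\mathcal{T}_{\varepsilon,\lambda}$-continuous mappings, $\hat{T}$ is $\sigma$-stable and $\mathcal{T}_{\varepsilon,\lambda}$-continuous, hence local by Lemma \ref{lemm.5.1} and continuous-in-probability on the whole space. Moreover $P_{G}(g)=g$ for every $g\in G$ by the uniqueness part of Lemma \ref{lemm.projection}, so $\hat{T}$ restricted to $G$ coincides with $T$.

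Next I would invoke Lemma \ref{lemm.5.2} with $X=Y=\mathbb{R}^{d}$. Since that lemma requires a complete base, I would first pass to the completion $(\Omega,\hat{\mathcal{F}},P)$, using the identity $L^{0}(\hat{\mathcal{F}},\mathbb{R}^{d})=L^{0}(\mathcal{F},\mathbb{R}^{d})$ exactly as in the proof of Theorem \ref{thm.equivalent2}; the operator $\hat{T}$ remains local and continuous-in-probability with respect to $\hat{\mathcal{F}}$. Lemma \ref{lemm.5.2} then supplies a Carath\'{e}odory function $f:\Omega\times\mathbb{R}^{d}\rightarrow\mathbb{R}^{d}$ with $\hat{T}=h_{f}$, and the remark immediately after Lemma \ref{lemm.5.2} guarantees that $\hat{T}=h_{f}$ is a.s. sequentially continuous.

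Finally I would conclude. Let $x\in G$ and let $\{x_{n},n\in\mathbb{N}\}$ be a sequence in $G$ such that $\{\|x_{n}-x\|,n\in\mathbb{N}\}$ converges a.s. to $0$. The a.s. sequential continuity of $\hat{T}$ gives that $\{\|\hat{T}(x_{n})-\hat{T}(x)\|,n\in\mathbb{N}\}$ converges a.s. to $0$; but $\hat{T}(x_{n})=T(x_{n})$ and $\hat{T}(x)=T(x)$ because $x_{n},x\in G$, so $\{\|T(x_{n})-T(x)\|,n\in\mathbb{N}\}$ converges a.s. to $0$, i.e. $T$ is a.s. sequentially continuous. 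I expect the only delicate points to be the bookkeeping around the completion of the probability space and checking that composition with $P_{G}$ preserves both $\sigma$-stability and $\mathcal{T}_{\varepsilon,\lambda}$-continuity while leaving $T$ unchanged on $G$; the substantive content is entirely carried by Ponosov's Lemma \ref{lemm.5.2}.
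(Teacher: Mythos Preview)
Your proposal is correct and follows essentially the same route as the paper: extend $T$ to all of $L^{0}(\mathcal{F},\mathbb{R}^{d})$ by composing with the random projection $P_{G}$, then apply Ponosov's Lemma \ref{lemm.5.2} to obtain a Carath\'{e}odory representation $h_{f}$, from which a.s. sequential continuity follows. The only cosmetic difference is that the paper dispatches the completeness issue in one sentence (``we can, without loss of generality, assume that $(\Omega,\mathcal{F},P)$ is complete'') rather than explicitly passing to $\hat{\mathcal{F}}$ as you do.
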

\begin{proof}
	Since elements of both $G$ and $L^{0}(\mathcal{F},\mathbb{R}^{d})$ are equivalence classes, we can, without loss of generality, assume that $(\Omega,\mathcal{F},P)$ is complete. Further, let $P_{G}$
	be the random projection of $L^{0}(\mathcal{F},\mathbb{R}^{d})$ onto $G$. Then $T\circ P_{G}:L^{0}(\mathcal{F},\mathbb{R}^{d})\rightarrow L^{0}(\mathcal{F},\mathbb{R}^{d})$ is still $\sigma$-stable
	and $\mathcal{T}_{\varepsilon,\lambda}$-continuous, and hence by Lemma \ref{lemm.5.2} there exists some Carath\'{e}odory function $f:(\Omega,\mathcal{F},P)\times \mathbb{R}^{d}\rightarrow \mathbb{R}^{d}$ such that 
	$T\circ P_{G}=h_{f}$, in particular $T(g)=h_{f}(g)$ for any $g\in G$, which thus implies that $T$ is a.s. sequentially continuous.
\end{proof}

\begin{remark}
	Proposition \ref{prop.5.8} has implied the equivalence of Proposition 3.1 of  \cite{DKKS2013} with either of Proposition \ref{prop.2.3} and Proposition \ref{prop.2.4}. 
\end{remark}

\section{The equivalence between the random Brouwer fixed point theorem and random Borsuk theorem}\label{sec.6}

\par Proposition \ref{prop.6.1} below shows that Theorem \ref{thm.Brouwer} is equivalent to a special case of it, namely, random Brouwer fixed point theorem for random closed balls, which will provide much convenience for our research into the random version of the classical Borsuk theorem.

\begin{proposition}\label{prop.6.1}
	The following three statements are equivalent:
	\begin{itemize}
		\item [(1)] Every $\mathcal{T}_{c}$-continuous $\sigma$-stable mapping from a $\mathcal{T}_{c}$-closed, $\sigma$-stable, a.s. bounded and $L^{0}$-convex subset $G$ of $L^{0}(\mathcal{F},\mathbb{R}^{d})$ to $G$ has a fixed point.
		\item [(2)] Every $\mathcal{T}_{c}$-continuous $\sigma$-stable mapping from a random closed ball $ \bar{B}(\theta,r)$ of $L^{0}(\mathcal{F},\mathbb{R}^{d})$ to $ \bar{B}(\theta,r)$ has a fixed point, where $r\in L^{0}_{++}(\mathcal{F})$ and $ \bar{B}(\theta,r)=\{x\in L^{0}(\mathcal{F},\mathbb{R}^{d}):\|x\|\leq r\}$.
		\item [(3)] Every $\mathcal{T}_{c}$-continuous $\sigma$-stable mapping from the random closed unit ball $ \bar{B}(\theta,1)$ of $L^{0}(\mathcal{F},\mathbb{R}^{d})$ to $ \bar{B}(\theta,1)$ has a fixed 
		point.
	\end{itemize} 
\end{proposition}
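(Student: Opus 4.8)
The plan is to prove the cyclic chain of implications $(1)\Rightarrow(2)\Rightarrow(3)\Rightarrow(1)$, where the first two implications are immediate specializations and essentially the whole weight of the argument falls on $(3)\Rightarrow(1)$, which I would obtain by combining a scaling transformation with the random projection of Lemma \ref{lemm.projection}.

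For $(1)\Rightarrow(2)$, I would simply check that every random closed ball $\bar{B}(\theta,r)$ with $r\in L^{0}_{++}(\mathcal{F})$ is itself a legitimate domain for statement (1): it is $\mathcal{T}_{c}$-closed; it is $\sigma$-stable, since $\|\sum_{n}I_{a_{n}}x_{n}\|=\sum_{n}I_{a_{n}}\|x_{n}\|\leq r$ whenever each $\|x_{n}\|\leq r$; it is a.s. bounded because $\bigvee\{\|x\|:x\in\bar{B}(\theta,r)\}\leq r\in L^{0}_{+}(\mathcal{F})$; and it is $L^{0}$-convex by the triangle inequality. Hence (2) is a special case of (1). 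The implication $(2)\Rightarrow(3)$ is obtained by taking $r=1$, so that $\bar{B}(\theta,1)$ is exactly the random closed unit ball.

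The substance is in $(3)\Rightarrow(1)$. Let $G$ be $\mathcal{T}_{c}$-closed, $\sigma$-stable, a.s. bounded and $L^{0}$-convex, and let $f:G\to G$ be $\sigma$-stable and $\mathcal{T}_{c}$-continuous. Since $G$ is $\sigma$-stable and $\mathcal{T}_{c}$-closed, it is also $\mathcal{T}_{\varepsilon,\lambda}$-closed by the result of \cite{Guo2010} recalled in Section \ref{sec.2}. The a.s. boundedness of $G$ yields some $r\in L^{0}_{++}(\mathcal{F})$ (for instance $r=\bigvee\{\|x\|:x\in G\}+1$) with $G\subseteq\bar{B}(\theta,r)$. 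I would then rescale into the unit ball: put $G_{1}:=r^{-1}G$, so that $G_{1}\subseteq V:=\bar{B}(\theta,1)$, and $G_{1}$ remains $\mathcal{T}_{\varepsilon,\lambda}$-closed, $\sigma$-stable and $L^{0}$-convex because the scaling $T_{r^{-1}}(x)=r^{-1}x$ is a $\sigma$-stable $\mathcal{T}_{\varepsilon,\lambda}$-homeomorphism of $L^{0}(\mathcal{F},\mathbb{R}^{d})$ with inverse $T_{r}$. By Lemma \ref{lemm.projection} the random projection $P_{G_{1}}:L^{0}(\mathcal{F},\mathbb{R}^{d})\to G_{1}$ exists and is nonexpansive, hence $\sigma$-stable and $\mathcal{T}_{c}$-continuous. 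Setting $g:=T_{r^{-1}}\circ f\circ T_{r}:G_{1}\to G_{1}$ and $\hat{g}:=g\circ(P_{G_{1}}|_{V}):V\to V$, the map $\hat{g}$ is a $\sigma$-stable $\mathcal{T}_{c}$-continuous self-map of the unit ball $V$. By (3) it has a fixed point $v=\hat{g}(v)=g(P_{G_{1}}(v))$; since the right-hand side lies in $G_{1}$, we get $v\in G_{1}$, whence $P_{G_{1}}(v)=v$ and $v=g(v)=r^{-1}f(rv)$. Therefore $f(rv)=rv$ and $rv\in G$ is the desired fixed point of $f$.

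The main obstacle is entirely contained in $(3)\Rightarrow(1)$, and it is the bookkeeping that guarantees the auxiliary self-map $\hat{g}$ both lands in the unit ball and inherits $\sigma$-stability and $\mathcal{T}_{c}$-continuity: one must invoke the $\sigma$-stable equivalence of $\mathcal{T}_{c}$- and $\mathcal{T}_{\varepsilon,\lambda}$-closedness to make $G_{1}$ an admissible target for the random projection, verify that a nonexpansive $L^{0}$-Lipschitz map is $\mathcal{T}_{c}$-continuous, and check that the fixed point produced by (3) actually falls inside $G_{1}$ (so that the projection acts as the identity there) rather than merely in $V$. Once these compatibility checks are in place, the scaling identity $g=T_{r^{-1}}\circ f\circ T_{r}$ transports the fixed point back to $G$ with no further effort.
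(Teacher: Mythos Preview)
Your proof is correct and uses the same two ingredients as the paper---the scaling homeomorphism $T_{r}$ and the random projection $P_{G}$ from Lemma \ref{lemm.projection}---only you combine them into a single step $(3)\Rightarrow(1)$, whereas the paper separates them by first proving $(2)\Rightarrow(1)$ via the projection $P_{G}$ restricted to $\bar{B}(\theta,r)$ and then observing $(2)\Leftrightarrow(3)$ via the scaling $T_{r}$. The arguments are essentially equivalent.
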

\begin{proof}
	$(1)\Rightarrow (2)$ is clear.

	\par $(2)\Rightarrow (1)$. Let $f:G\rightarrow G$ be a $\mathcal{T}_{c}$-continuous $\sigma$-stable mapping. Since $G$ is a.s. bounded, we can choose a sufficiently large $r\in L^{0}_{++}(\mathcal{F})$ such that $G\subseteq  \bar{B}(\theta,r)$. Further, since 
	$G$ is $\sigma$-stable and $\mathcal{T}_{c}$-closed, it follows immediately from \cite{Guo2010} that $G$ is also $\mathcal{T}_{\varepsilon,\lambda}$-closed. Then by Lemma \ref{lemm.projection}
	the random projection $P_{G}$ of $L^{0}(\mathcal{F},\mathbb{R}^{d})$ onto $G$ is well defined, $\sigma$-stable and $\mathcal{T}_{c}$-continuous. Now, let $P_{ \bar{B}(\theta,r),G}$ be the restriction
	of $P_{G}$ to $ \bar{B}(\theta,r)$. Then it is also $\sigma$-stable and $\mathcal{T}_{c}$-continuous since $ \bar{B}(\theta,r)$ is $\sigma$-stable. Thus, it is clear that $f\circ P_{ \bar{B}(\theta,r),G}$ is a 
	$\mathcal{T}_{c}$-continuous and $\sigma$-stable mapping from $ \bar{B}(\theta,r)$ to $ \bar{B}(\theta,r)$. By (2) it follows that $f\circ P_{ \bar{B}(\theta,r),G}$ has a fixed point $x$. Eventually, $f(x)=x$ since $x\in G$.

	\par  $(2)\Leftrightarrow (3)$ is clear since $T_{r}: \bar{B}(\theta,1)\rightarrow  \bar{B}(\theta,r)$, defined by $T_{r}(x)=rx$ for each $x\in  \bar{B}(\theta,1)$, is a  $\sigma$-stable $\mathcal{T}_{c}$-homeomorphism.
\end{proof}

\par In 1967, Borsuk \cite{Borsuk1967} introduced the notion of an $r$-image for a subset of $\mathbb{R}^{d}$. Now, according to the two topologies $\mathcal{T}_{c}$ and $\mathcal{T}_{\varepsilon,\lambda}$
of $L^{0}(\mathcal{F},\mathbb{R}^{d})$, we can introduce the corresponding two notions of a $\mathcal{T}_{c}$-$r$-image and a $\mathcal{T}_{\varepsilon,\lambda}$-$r$-image for a $\sigma$-stable subset of
$L^{0}(\mathcal{F},\mathbb{R}^{d})$, respectively, as follows.

\begin{definition}\label{defn.r-image}
	 Let $E$ and $F$ be two $\sigma$-stable subsets of $L^{0}(\mathcal{F},\mathbb{R}^{d})$. $E$ is called a $\mathcal{T}_{c}$-$r$-image (resp., a $\mathcal{T}_{\varepsilon,\lambda}$-$r$-image) of $F$ if 
	 there exist a $\mathcal{T}_{c}$-continuous (resp., $\mathcal{T}_{\varepsilon,\lambda}$-continuous) $\sigma$-stable mapping $h:F\rightarrow E$ and a $\mathcal{T}_{c}$-continuous 
	 (resp., $\mathcal{T}_{\varepsilon,\lambda}$-continuous) $\sigma$-stable mapping $g:E\rightarrow F$ such that $h\circ g$ is the identity on $E$. Such a mapping $h$ is called a $\mathcal{T}_{c}$-$r$-mapping 
	 (resp., a $\mathcal{T}_{\varepsilon,\lambda}$-$r$-mapping) of $F$ onto $E$. In the special case where $E\subseteq F$ and $g$ is the inclusion mapping, then $E$ is called a $\mathcal{T}_{c}$-retract
	 (resp., a $\mathcal{T}_{\varepsilon,\lambda}$-retract) of $F$ and $h$ is called a $\mathcal{T}_{c}$-retraction (resp., a $\mathcal{T}_{\varepsilon,\lambda}$-retraction). 
\end{definition}

\par Let $E$ and $F$ be the same as in Definition \ref{defn.r-image}. If $h:F\rightarrow E$ is a $\sigma$-stable and bijective mapping, then it is easy to check that $h^{-1}:E\rightarrow F$ is also $\sigma$-stable.
It is also clear that a $\sigma$-stable $\mathcal{T}_{c}$-homeomorphism (resp., $\mathcal{T}_{\varepsilon,\lambda}$-homeomorphism) of $F$ onto $E$ must be a $\mathcal{T}_{c}$-$r$-mapping 
(resp., a $\mathcal{T}_{\varepsilon,\lambda}$-$r$-mapping) of $F$ onto $E$. It follows immediately from \cite[Lemma 4.3]{GWXYC2025} that a $\mathcal{T}_{\varepsilon,\lambda}$-$r$-mapping 
(resp., a $\mathcal{T}_{\varepsilon,\lambda}$-retraction) of $F$ onto $E$ must be a $\mathcal{T}_{c}$-$r$-mapping (resp., a $\mathcal{T}_{c}$-retraction) of $F$ onto $E$.

\par Proposition \ref{prop.6.3} below is a random version of \cite[Theorem 6.9]{Bor1985}, which is essentially a reformulation of Theorem \ref{thm.Brouwer} in a more general form.

\begin{proposition}\label{prop.6.3}
	Let $E$ be a a $\mathcal{T}_{c}$-$r$-image of an a.s. bounded, $\mathcal{T}_{c}$-closed, $\sigma$-stable and $L^{0}$-convex subset $F$ of $L^{0}(\mathcal{F},\mathbb{R}^{d})$ and $f:E\rightarrow E$ be 
	$\sigma$-stable and $\mathcal{T}_{c}$-continuous. Then $f$ has a fixed point.
\end{proposition}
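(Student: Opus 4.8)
The plan is to reduce the statement directly to Theorem \ref{thm.Brouwer} by transporting the fixed point problem from $E$ back to the well-behaved set $F$ via the $r$-mapping data. By Definition \ref{defn.r-image}, since $E$ is a $\mathcal{T}_{c}$-$r$-image of $F$, there exist a $\mathcal{T}_{c}$-continuous $\sigma$-stable mapping $h:F\rightarrow E$ and a $\mathcal{T}_{c}$-continuous $\sigma$-stable mapping $g:E\rightarrow F$ such that $h\circ g$ is the identity on $E$. First I would form the composite
$$\tilde{f}:=g\circ f\circ h:F\rightarrow F.$$
Since $f$ is $\sigma$-stable and $\mathcal{T}_{c}$-continuous, and since the composition of $\sigma$-stable (resp. $\mathcal{T}_{c}$-continuous) mappings is again $\sigma$-stable (resp. $\mathcal{T}_{c}$-continuous), $\tilde{f}$ is a $\sigma$-stable $\mathcal{T}_{c}$-continuous self-mapping of $F$.

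Because $F$ is a.s. bounded, $\mathcal{T}_{c}$-closed, $\sigma$-stable and $L^{0}$-convex, Theorem \ref{thm.Brouwer} applies to $\tilde{f}$ and yields some $x_{0}\in F$ with $\tilde{f}(x_{0})=x_{0}$, that is,
$$g(f(h(x_{0})))=x_{0}.$$
Next I would recover a fixed point of $f$ by exploiting the identity $h\circ g=\mathrm{id}_{E}$. Setting $y_{0}:=h(x_{0})\in E$ and noting that $f(y_{0})\in E$, I obtain
$$f(y_{0})=h(g(f(y_{0})))=h(g(f(h(x_{0}))))=h(x_{0})=y_{0},$$
so $y_{0}$ is the desired fixed point of $f$ in $E$.

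This is the random counterpart of the classical retraction argument (Theorem 6.9 of \cite{Bor1985}), and I expect no serious obstacle: the entire analytic difficulty has already been absorbed into Theorem \ref{thm.Brouwer}, whose hypotheses on $F$ are precisely the structural properties that the $\mathcal{T}_{c}$-$r$-image data preserve under composition. The only points requiring a brief check are that the class of $\sigma$-stable $\mathcal{T}_{c}$-continuous mappings is stable under composition (the $\sigma$-stability of $\tilde{f}$ follows by applying the defining identity of $\sigma$-stability successively to $h$, $f$ and $g$, and the $\mathcal{T}_{c}$-continuity is the usual continuity of composites) and that $f(y_{0})$ genuinely lies in $E$ so that $h\circ g$ acts as the identity on it; both are routine.
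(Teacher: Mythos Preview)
Your proof is correct and follows essentially the same route as the paper's own argument: form $g\circ f\circ h:F\to F$, apply Theorem \ref{thm.Brouwer} to obtain a fixed point $x_{0}$, and then use $h\circ g=\mathrm{id}_{E}$ to check that $h(x_{0})$ is fixed by $f$. The paper's version is terser but the logic is identical.
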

\begin{proof}
	Let $g:E\rightarrow F$ and $h:F\rightarrow E$ be the same as in Definition \ref{defn.r-image}. Then $g\circ f\circ h:F\rightarrow F$ has a fixed point $z$ by Theorem \ref{thm.Brouwer}. Let $x=h(z)$. Then
	$$f(x)=(h\circ g)(f(x))=(h\circ g\circ f\circ h)(z)=h(z)=x.$$
\end{proof}

\par Theorem \ref{thm.6.4} below can be aptly called a random Borsuk theorem, which is a random version of the classical Borsuk theorem \cite{Borsuk1967} (see also \cite[Theorem 6.12]{Bor1985}). Here, we would like to remind the reader that $L^{0}(\mathcal{F},\bar{B}_{1}(\mathbb{R}^{d}))$ in the remainder of this section is the very same thing as the random closed unit ball $\bar{B}(\theta,1):=\{x\in L^{0}(\mathcal{F},\mathbb{R}^{d}):\|x\|\leq 1\}$.


\begin{theorem}\label{thm.6.4}
	$L^{0}(\mathcal{F},\partial \bar{B}_{1}(\mathbb{R}^{d}))$ is not a $\mathcal{T}_{c}$-$r$-image of $L^{0}(\mathcal{F},\bar{B}_{1}(\mathbb{R}^{d}))$, where, as usual,  $\bar{B}_{1}(\mathbb{R}^{d})=\{y\in \mathbb{R}^{d}:|y|\leq 1\}$, $\partial \bar{B}_{1}(\mathbb{R}^{d})=\{y\in \mathbb{R}^{d}:|y|=1\}$ and $|\cdot|$ denotes the Euclidean norm on $\mathbb{R}^{d}$. 
	In particular, $L^{0}(\mathcal{F},\partial \bar{B}_{1}(\mathbb{R}^{d}))$ is not a $\mathcal{T}_{c}$-retraction of $L^{0}(\mathcal{F},\bar{B}_{1}(\mathbb{R}^{d}))$.
\end{theorem}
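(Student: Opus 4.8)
The plan is to argue by contradiction and reduce to random Brouwer fixed point theorem (Theorem~\ref{thm.Brouwer}), following faithfully the classical route by which the Borsuk nonretraction theorem is extracted from the Brouwer fixed point theorem. To begin, I would record the structural facts needed to invoke Theorem~\ref{thm.Brouwer}: the random closed unit ball $\bar{B}(\theta,1)=L^{0}(\mathcal{F},\bar{B}_{1}(\mathbb{R}^{d}))$ is a.s.\ bounded, $\mathcal{T}_{c}$-closed, $\sigma$-stable and $L^{0}$-convex, while both $L^{0}(\mathcal{F},\bar{B}_{1}(\mathbb{R}^{d}))$ and $L^{0}(\mathcal{F},\partial\bar{B}_{1}(\mathbb{R}^{d}))$ are $\sigma$-stable subsets of $L^{0}(\mathcal{F},\mathbb{R}^{d})$; the latter follows from $\|\sum_{n}I_{a_{n}}y_{n}\|=\sum_{n}I_{a_{n}}\|y_{n}\|$, so that a countable concatenation of elements of $L^{0}$-norm $1$ again has $L^{0}$-norm $1$.

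Suppose, for contradiction, that $L^{0}(\mathcal{F},\partial\bar{B}_{1}(\mathbb{R}^{d}))$ is a $\mathcal{T}_{c}$-$r$-image of $L^{0}(\mathcal{F},\bar{B}_{1}(\mathbb{R}^{d}))$. By Definition~\ref{defn.r-image} there exist $\sigma$-stable $\mathcal{T}_{c}$-continuous mappings $h:L^{0}(\mathcal{F},\bar{B}_{1}(\mathbb{R}^{d}))\rightarrow L^{0}(\mathcal{F},\partial\bar{B}_{1}(\mathbb{R}^{d}))$ and $g:L^{0}(\mathcal{F},\partial\bar{B}_{1}(\mathbb{R}^{d}))\rightarrow L^{0}(\mathcal{F},\bar{B}_{1}(\mathbb{R}^{d}))$ with $h\circ g=\mathrm{id}$ on $L^{0}(\mathcal{F},\partial\bar{B}_{1}(\mathbb{R}^{d}))$. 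The antipodal map $a(y)=-y$ restricts to a $\sigma$-stable $\mathcal{T}_{c}$-continuous self-mapping of $L^{0}(\mathcal{F},\partial\bar{B}_{1}(\mathbb{R}^{d}))$, since $\|-y\|=\|y\|=1$ whenever $\|y\|=1$. I would then form $F:=g\circ a\circ h$, which, being a composition of $\sigma$-stable $\mathcal{T}_{c}$-continuous maps, is a $\sigma$-stable $\mathcal{T}_{c}$-continuous mapping of $L^{0}(\mathcal{F},\bar{B}_{1}(\mathbb{R}^{d}))$ into itself.

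Now I would apply Theorem~\ref{thm.Brouwer} to $F$ to obtain a fixed point $x=F(x)=g(-h(x))$. Applying $h$ and using that $\|h(x)\|=1$ forces $-h(x)\in L^{0}(\mathcal{F},\partial\bar{B}_{1}(\mathbb{R}^{d}))$, the retraction identity $h\circ g=\mathrm{id}$ gives $h(x)=h(g(-h(x)))=-h(x)$, whence $h(x)=\theta$; this contradicts $\|h(x)\|=1$. This contradiction proves that $L^{0}(\mathcal{F},\partial\bar{B}_{1}(\mathbb{R}^{d}))$ is not a $\mathcal{T}_{c}$-$r$-image of $L^{0}(\mathcal{F},\bar{B}_{1}(\mathbb{R}^{d}))$. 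The final ``in particular'' assertion is then immediate, since a $\mathcal{T}_{c}$-retraction is the special $\mathcal{T}_{c}$-$r$-mapping for which $g$ is the inclusion, so a nonexistent $r$-image a fortiori excludes a retraction.

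The argument is almost entirely a transcription of the classical deduction, so the only genuine points requiring care are module-theoretic rather than topological: first, that negation preserves membership in the random sphere, which rests on the pointwise identity $\|-y\|=\|y\|$ and is exactly what licenses the use of $h\circ g=\mathrm{id}$ at the point $-h(x)$; and second, the routine but necessary verification that $\sigma$-stability and $\mathcal{T}_{c}$-continuity pass to the composition $g\circ a\circ h$ so that Theorem~\ref{thm.Brouwer} applies. I expect no serious obstacle beyond this bookkeeping; alternatively, one could invoke the equivalent formulation in Proposition~\ref{prop.6.1}(3) to apply the fixed point property directly on $\bar{B}(\theta,1)$.
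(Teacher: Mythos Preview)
Your proposal is correct and follows essentially the same route as the paper: assume the existence of $h$ and $g$ with $h\circ g=\mathrm{id}$, form the self-map $x\mapsto g(-h(x))$ on $L^{0}(\mathcal{F},\bar{B}_{1}(\mathbb{R}^{d}))$, apply Theorem~\ref{thm.Brouwer} to obtain a fixed point, and deduce $h(x)=-h(x)=\theta$, contradicting $\|h(x)\|=1$. The only cosmetic difference is that you name the antipodal map explicitly and spell out the $\sigma$-stability bookkeeping a bit more carefully than the paper does.
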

\begin{proof}
	It is easy to see that both $L^{0}(\mathcal{F},\partial \bar{B}_{1}(\mathbb{R}^{d}))$ and $L^{0}(\mathcal{F},\bar{B}_{1}(\mathbb{R}^{d}))$ are $\sigma$-stable. If $L^{0}(\mathcal{F},\partial \bar{B}_{1}(\mathbb{R}^{d}))$ is a $\mathcal{T}_{c}$-$r$-image 
	of $L^{0}(\mathcal{F},\bar{B}_{1}(\mathbb{R}^{d}))$, then there are a $\sigma$-stable $\mathcal{T}_{c}$-continuous mapping $h:L^{0}(\mathcal{F},\bar{B}_{1}(\mathbb{R}^{d}))\rightarrow L^{0}(\mathcal{F},\partial \bar{B}_{1}(\mathbb{R}^{d}))$ and 
	a $\sigma$-stable $\mathcal{T}_{c}$-continuous mapping $g:L^{0}(\mathcal{F},\partial \bar{B}_{1}(\mathbb{R}^{d}))\rightarrow L^{0}(\mathcal{F},\bar{B}_{1}(\mathbb{R}^{d}))$ such that $h\circ g$ is the identity on $L^{0}(\mathcal{F},\partial \bar{B}_{1}(\mathbb{R}^{d}))$.
	Define $f:L^{0}(\mathcal{F}, \bar{B}_{1}(\mathbb{R}^{d}))\rightarrow L^{0}(\mathcal{F},\bar{B}_{1}(\mathbb{R}^{d}))$ by $f(x)=g(-h(x))$ for any $x\in L^{0}(\mathcal{F}, \bar{B}_{1}(\mathbb{R}^{d}))$, then $f$ is a $\sigma$-stable and $\mathcal{T}_{c}$-continuous
	mapping, and hence there exists $z\in L^{0}(\mathcal{F}, \bar{B}_{1}(\mathbb{R}^{d}))$ by Theorem \ref{thm.Brouwer} such that $f(z)=z$, that is to say, $z=g(-h(z))$, so $h(z)=(h\circ g)(-h(z))=-h(z)$. Thus 
	$h(z)=0\in L^{0}(\mathcal{F},\partial \bar{B}_{1}(\mathbb{R}^{d}))$, a contradiction.
\end{proof}

\par Since a $\mathcal{T}_{\varepsilon,\lambda}$-$r$-image and a $\mathcal{T}_{\varepsilon,\lambda}$-retraction of $L^{0}(\mathcal{F},\bar{B}_{1}(\mathbb{R}^{d}))$ must be a $\mathcal{T}_{c}$-$r$-image and a $\mathcal{T}_{c}$-retraction of $L^{0}(\mathcal{F},\bar{B}_{1}(\mathbb{R}^{d}))$, respectively, we can immediately obtain Corollary \ref{coro.6.5} below, which includes \cite[Lemma 3.5]{DKKS2013} as a special case.

\begin{corollary}\label{coro.6.5}
 $L^{0}(\mathcal{F},\partial \bar{B}_{1}(\mathbb{R}^{d}))$ is not a $\mathcal{T}_{\varepsilon,\lambda}$-$r$-image of $L^{0}(\mathcal{F},\bar{B}_{1}(\mathbb{R}^{d}))$.
 In particular, $L^{0}(\mathcal{F},\partial \bar{B}_{1}(\mathbb{R}^{d}))$ is not a $\mathcal{T}_{\varepsilon,\lambda}$-retraction of $L^{0}(\mathcal{F},\bar{B}_{1}(\mathbb{R}^{d}))$.
\end{corollary}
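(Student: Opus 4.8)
The plan is to derive this $\mathcal{T}_{\varepsilon,\lambda}$-version of random Borsuk theorem directly from its already-established $\mathcal{T}_{c}$-counterpart, Theorem \ref{thm.6.4}. The entire argument rests on the observation, recorded in the discussion following Definition \ref{defn.r-image}, that between $\sigma$-stable sets every $\mathcal{T}_{\varepsilon,\lambda}$-$r$-mapping is automatically a $\mathcal{T}_{c}$-$r$-mapping. I would therefore argue by contradiction.

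Suppose $L^{0}(\mathcal{F},\partial \bar{B}_{1}(\mathbb{R}^{d}))$ were a $\mathcal{T}_{\varepsilon,\lambda}$-$r$-image of $L^{0}(\mathcal{F},\bar{B}_{1}(\mathbb{R}^{d}))$. By Definition \ref{defn.r-image} there would exist a $\sigma$-stable $\mathcal{T}_{\varepsilon,\lambda}$-continuous mapping $h:L^{0}(\mathcal{F},\bar{B}_{1}(\mathbb{R}^{d}))\rightarrow L^{0}(\mathcal{F},\partial\bar{B}_{1}(\mathbb{R}^{d}))$ and a $\sigma$-stable $\mathcal{T}_{\varepsilon,\lambda}$-continuous mapping $g:L^{0}(\mathcal{F},\partial\bar{B}_{1}(\mathbb{R}^{d}))\rightarrow L^{0}(\mathcal{F},\bar{B}_{1}(\mathbb{R}^{d}))$ with $h\circ g$ the identity on $L^{0}(\mathcal{F},\partial\bar{B}_{1}(\mathbb{R}^{d}))$. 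The decisive step is to upgrade the continuity of both $h$ and $g$: since their domains $L^{0}(\mathcal{F},\bar{B}_{1}(\mathbb{R}^{d}))$ and $L^{0}(\mathcal{F},\partial\bar{B}_{1}(\mathbb{R}^{d}))$ are $\sigma$-stable, Remark \ref{rmk.continuous} turns the $\mathcal{T}_{\varepsilon,\lambda}$-continuity of a $\sigma$-stable map first into random sequential continuity and then into $\mathcal{T}_{c}$-continuity. Consequently $h$ and $g$ are $\sigma$-stable and $\mathcal{T}_{c}$-continuous with $h\circ g$ the identity, which exhibits $L^{0}(\mathcal{F},\partial\bar{B}_{1}(\mathbb{R}^{d}))$ as a $\mathcal{T}_{c}$-$r$-image of $L^{0}(\mathcal{F},\bar{B}_{1}(\mathbb{R}^{d}))$, contradicting Theorem \ref{thm.6.4}. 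The ``in particular'' clause on retractions follows at once, since a $\mathcal{T}_{\varepsilon,\lambda}$-retraction is merely the special case in which $E\subseteq F$ and $g$ is the inclusion mapping.

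I do not anticipate any serious obstacle: the whole content is compressed into the implication ``$\sigma$-stable plus $\mathcal{T}_{\varepsilon,\lambda}$-continuous implies $\mathcal{T}_{c}$-continuous'' supplied by Remark \ref{rmk.continuous}. The only point needing a moment's care is to confirm that this upgrade applies simultaneously to both $h$ and $g$, which it does because each is $\sigma$-stable and defined on a $\sigma$-stable domain; the $r$-mapping data then transfers verbatim from $\mathcal{T}_{\varepsilon,\lambda}$ to $\mathcal{T}_{c}$. Thus Corollary \ref{coro.6.5} is genuinely a corollary of Theorem \ref{thm.6.4} rather than an independent result.
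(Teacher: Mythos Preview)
Your proposal is correct and matches the paper's intended approach. The paper does not write out an explicit proof for Corollary \ref{coro.6.5}, but the discussion immediately following Definition \ref{defn.r-image} already records (via Lemma 4.3 of \cite{GWXYC2025}, i.e.\ the content of Remark \ref{rmk.continuous}) that every $\mathcal{T}_{\varepsilon,\lambda}$-$r$-mapping between $\sigma$-stable sets is automatically a $\mathcal{T}_{c}$-$r$-mapping, so the corollary is immediate from Theorem \ref{thm.6.4} exactly as you argue.
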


\begin{theorem}\label{thm.6.5}
	 Theorem \ref{thm.6.4} is equivalent to the random Brouwer fixed point theorem for the random closed unite ball $L^{0}(\mathcal{F}, \bar{B}_{1}(\mathbb{R}^{d}))$ of $L^{0}(\mathcal{F},\mathbb{R}^{d})$ (and hence also equivalent to Theorem \ref{thm.Brouwer} 
	 by Proposition \ref{prop.6.1}).
\end{theorem}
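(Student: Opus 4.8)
The plan is to establish the two implications separately. For the forward direction---that random Brouwer fixed point theorem for the random closed unit ball $L^{0}(\mathcal{F},\bar{B}_{1}(\mathbb{R}^{d}))$ implies Theorem \ref{thm.6.4}---I observe that this is precisely what the proof of Theorem \ref{thm.6.4} already delivers: the self-mapping $f(x)=g(-h(x))$ constructed there is a $\sigma$-stable $\mathcal{T}_{c}$-continuous mapping of $L^{0}(\mathcal{F},\bar{B}_{1}(\mathbb{R}^{d}))$ into itself, so only the unit-ball version of Theorem \ref{thm.Brouwer} is invoked to produce the fixed point $z$ yielding the contradiction $h(z)=-h(z)$. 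Hence no extra work is needed in this direction.

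The substance lies in the reverse direction: assuming Theorem \ref{thm.6.4} (random Borsuk theorem), I would show that every $\sigma$-stable $\mathcal{T}_{c}$-continuous $f:L^{0}(\mathcal{F},\bar{B}_{1}(\mathbb{R}^{d}))\rightarrow L^{0}(\mathcal{F},\bar{B}_{1}(\mathbb{R}^{d}))$ has a fixed point. The first step is a gluing argument exploiting $\sigma$-stability. Consider the family $\{(f(x)=x):x\in L^{0}(\mathcal{F},\bar{B}_{1}(\mathbb{R}^{d}))\}$ in the measure algebra $B_{\mathcal{F}}$ and let $a^{*}$ be its supremum; since suprema in $B_{\mathcal{F}}$ are attained along countable subfamilies, I choose $\{x_{n},n\in\mathbb{N}\}$ with $\bigvee_{n}(f(x_{n})=x_{n})=a^{*}$, disjointify into $\{a_{n},n\in\mathbb{N}\}$, and set $x^{*}=\sum_{n=1}^{\infty}I_{a_{n}}x_{n}+I_{(a^{*})^{c}}x_{0}$ for a fixed $x_{0}$. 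Because $f$ is $\sigma$-stable, $I_{a_{n}}f(x^{*})=I_{a_{n}}f(x_{n})=I_{a_{n}}x_{n}=I_{a_{n}}x^{*}$, so $(f(x^{*})=x^{*})\geq a^{*}$, while trivially $(f(x^{*})=x^{*})\leq a^{*}$; hence $(f(x^{*})=x^{*})=a^{*}$. If $a^{*}=1$ then $x^{*}$ is the desired fixed point, and the remaining task is to rule out $a^{*}<1$.

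So suppose $b:=(a^{*})^{c}>0$. By maximality $(f(x)=x)\wedge b=0$ for every $x$, which means that on the restricted probability space corresponding to $b$ the induced $\sigma$-stable $\mathcal{T}_{c}$-continuous self-mapping $\hat{f}$ of the localized random closed unit ball is strongly fixed-point-free, i.e. $\|x-\hat{f}(x)\|>0$ everywhere on $b$. I then build the classical ray retraction $r(x)=x+t(x)\,(x-f(x))$, where $t(x)\geq 0$ is the nonnegative root of $\|x+t(x-f(x))\|^{2}=1$, namely
\[
t(x)=\frac{-\langle x,x-f(x)\rangle+\sqrt{\langle x,x-f(x)\rangle^{2}+\|x-f(x)\|^{2}(1-\|x\|^{2})}}{\|x-f(x)\|^{2}}.
\]
Strong fixed-point-freeness makes the denominator strictly positive on $b$, so $t(x)$ is well defined; $t(x)\geq 0$ follows from $1-\|x\|^{2}\geq 0$, and $\|r(x)\|=1$ by construction, while for $\|x\|=1$ the estimate $\langle x,x-f(x)\rangle=1-\langle x,f(x)\rangle\geq 0$ forces $t(x)=0$ and hence $r(x)=x$. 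Thus $r$ retracts the localized unit ball onto its sphere. Since $r$ is assembled from $f$ by local $L^{0}$-algebraic operations, it is $\sigma$-stable (cf. Lemma \ref{lemm.5.1}); applying Theorem \ref{thm.6.4} over the restricted probability space then yields the contradiction, forcing $a^{*}=1$.

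The main obstacle is the continuity of $r$. Since $f$ is only assumed $\mathcal{T}_{c}$-continuous, Lemma 4.3 of \cite{GWXYC2025} gives merely random sequential continuity, so I cannot directly conclude that $f(x_{n})$ converges whenever $x_{n}\to x$ in $\mathcal{T}_{\varepsilon,\lambda}$. The plan is to verify that $r$ is itself random sequentially continuous---passing to a random subsequence along which $f(x_{n_{k}})\to f(x)$ and using that division by $\|x-f(x)\|^{2}$ and the square root are continuous in probability precisely because $\|x-f(x)\|>0$ a.s. on $b$---and then to invoke Lemma 4.3 of \cite{GWXYC2025} once more to upgrade the $\sigma$-stable, random sequentially continuous $r$ to a genuine $\mathcal{T}_{c}$-retraction. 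Handling this subsequence bookkeeping, together with the measure-theoretic gluing of the first step, is where the care is concentrated.
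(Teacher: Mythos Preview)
Your proposal is correct and follows the same core strategy as the paper: from a putative fixed-point-free $f$ one manufactures a $\sigma$-stable $\mathcal{T}_{c}$-continuous ray retraction onto the random unit sphere, contradicting Theorem~\ref{thm.6.4}. The paper writes the retraction in the equivalent form $h(x)=x+\lambda(x)(f(x)-x)$ with $\lambda(x)$ the smaller root, which coincides with your $r$ after the sign flip $t(x)=-\lambda(x)$.

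The one genuine difference is the reduction step. The paper argues that $e(x)=\|f(x)-x\|$ attains its infimum $\eta$ at some $x_{0}$ (invoking random sequential compactness of the unit ball and Lemma~3.4 of \cite{GWXYC2025}); then $\eta>0$ on a set of positive measure, and one localizes there. You instead take the essential supremum $a^{*}=\bigvee_{x}(f(x)=x)$ in $B_{\mathcal{F}}$, glue a maximal partial fixed point $x^{*}$ via $\sigma$-stability, and localize to $(a^{*})^{c}$. Your route is slightly more elementary---it uses only countable attainment of suprema in the measure algebra rather than the attained-infimum lemma---and it delivers directly the stronger reduction $\|f(x)-x\|>0$ on the restricted space for \emph{every} $x$, which is exactly what the ray construction needs. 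The paper's route, on the other hand, makes the connection with random sequential compactness explicit. Your explicit handling of the $\mathcal{T}_{c}$-continuity of $r$ via random sequential continuity and Lemma~4.3 of \cite{GWXYC2025} fills in what the paper asserts without detail (``it is not very difficult to verify that $\lambda$ is still $\sigma$-stable and $\mathcal{T}_{c}$-continuous''); the subsequence bookkeeping you describe is indeed what is required, and the key point that division by $\|x-f(x)\|^{2}$ is harmless precisely because this quantity is a.s.\ strictly positive after localization is correctly identified.
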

\begin{proof}
	The proof of Theorem \ref{thm.6.4} has shown that the random Brouwer fixed point theorem implies Theorem \ref{thm.6.4}. We only need to prove that Theorem \ref{thm.6.4} implies the random Brouwer
	fixed point theorem for the random closed unit ball of $L^{0}(\mathcal{F},\mathbb{R}^{d})$ as follows.

	\par Suppose that there exists a $\sigma$-stable and $\mathcal{T}_{c}$-continuous mapping 
	$f:L^{0}(\mathcal{F},\bar{B}_{1}(\mathbb{R}^{d}))\rightarrow L^{0}(\mathcal{F},\bar{B}_{1}(\mathbb{R}^{d}))$ such that $f$ has no fixed point. Define $e:L^{0}(\mathcal{F},\bar{B}_{1}(\mathbb{R}^{d}))\rightarrow L^{0}(\mathcal{F})$ by $e(x)=\|f(x)-x\|$ for any 
	$x\in L^{0}(\mathcal{F},\bar{B}_{1}(\mathbb{R}^{d}))$, and denote $\eta=\bigwedge\{e(x):x\in L^{0}(\mathcal{F},\bar{B}_{1}(\mathbb{R}^{d}))\}$. Then by \cite[Lemma 3.4]{GWXYC2025} there exists $x_{0}\in L^{0}(\mathcal{F},\bar{B}_{1}(\mathbb{R}^{d}))$
	such that $\eta=\|f(x_{0})-x_{0}\|$ since $L^{0}(\mathcal{F},\bar{B}_{1}(\mathbb{R}^{d}))\}$ is random sequentially compact and $\sigma$-stable and since $e$ is $\sigma$-stable and random sequentially continuous. 
	Then we have $\eta>0$, since otherwise, $\eta=0$ yields $f(x_{0})=x_{0}$, which contradicts the assumption that $f$ has no fixed point.

	\par Now, let $A\in \mathcal{F}$ be an arbitrary chosen representative of $(\eta>0)$. Then $P(A)>0$. Since $\theta\in L^{0}(\mathcal{F},\bar{B}_{1}(\mathbb{R}^{d}))$
    and $f:L^{0}(\mathcal{F},\bar{B}_{1}(\mathbb{R}^{d})) \rightarrow L^{0}(\mathcal{F},\bar{B}_{1}(\mathbb{R}^{d}))$ is $\sigma$-stable, we know that $f$ has the local property, namely, 
	$\tilde{I}_{C}f(x)=\tilde{I}_{C}f(\tilde{I}_{C}x)$ for any $C\in \mathcal{F}$ and $x\in L^{0}(\mathcal{F},\bar{B}_{1}(\mathbb{R}^{d}))$: in fact, since 
	$$f(\tilde{I}_{C}x)=f(\tilde{I}_{C}x+\tilde{I}_{C^c}\theta)=\tilde{I}_{C}f(x)+\tilde{I}_{C^c}f(\theta),$$
	we have $\tilde{I}_{C}f(\tilde{I}_{C}x)= \tilde{I}_{C}f(x)$.
	In particular, $\tilde{I}_{A}f(\tilde{I}_{A}x)= \tilde{I}_{A}f(x)$ for any $x\in L^{0}(\mathcal{F},\bar{B}_{1}(\mathbb{R}^{d}))$. Further, let 
	$$G_{A}=\tilde{I}_{A} L^{0}(\mathcal{F},\bar{B}_{1}(\mathbb{R}^{d})):=\{\tilde{I}_{A}x: x\in L^{0}(\mathcal{F},\bar{B}_{1}(\mathbb{R}^{d}))\}$$ 
	and define $f_{A}:G_{A}\rightarrow G_{A}$ by 
	$f_{A}(\tilde{I}_{A}x)= \tilde{I}_{A}f(x)$ for any $x\in L^{0}(\mathcal{F},\bar{B}_{1}(\mathbb{R}^{d}))$. Then $f_{A}$ is well defined. It is clear that $G_{A}$ is a $\sigma$-stable, $\mathcal{T}_{c}$-closed, a.s. bounded and $L^{0}(\mathcal{F})$-convex subset of $L^{0}(\mathcal{F},R^{d})$ and 
	$f_{A}$ is also $\sigma$-stable and $\mathcal{T}_{c}$-continuous.

	\par Let $(A,A\cap \mathcal{F},P_{A})$ be the probability space with $P_{A}:A\cap \mathcal{F} \rightarrow [0,1]$ defined by 
	$P_{A}(A\cap C)=\frac{P(A\cap C)}{P(A)}$ for any $C\in \mathcal{F}$. Further, identifying $G_{A}$ with $L^{0}(\mathcal{F}_{A},\bar{B}_{1}(\mathbb{R}^{d}))$, 
	where $\mathcal{F}_{A}=A\cap \mathcal{F}$. Then $f_{A}$ can be also identified with a mapping from $L^{0}(\mathcal{F}_{A},\bar{B}_{1}(\mathbb{R}^{d}))$ to 
	$L^{0}(\mathcal{F}_{A},\bar{B}_{1}(\mathbb{R}^{d}))$. Again observing $\|f(x)-x\|>0$ on $A$ for any $x\in L^{0}(\mathcal{F},\bar{B}_{1}(\mathbb{R}^{d}))$, namely, 
	$\|f_{A}(x)-x\|>0$ on $A$ for each $x\in G_{A}$, we can, without loss of generality, assume that $A=\Omega$. Then 
	$\|f(x)-x\|>0$ on $\Omega$ for any $x\in L^{0}(\mathcal{F},\bar{B}_{1}(\mathbb{R}^{d}))$ (since otherwise, we can consider $f_{A}$ and $L^{0}(\mathcal{F}_{A},\bar{B}_{1}(\mathbb{R}^{d}))$
	to replace $f$ and $L^{0}(\mathcal{F},\bar{B}_{1}(\mathbb{R}^{d}))$).

	\par Finally, let $L_{x}=\{\lambda\in L^{0}(\mathcal{F}):\|x+\lambda(f(x)-x)\|=1\}$ for any $x\in L^{0}(\mathcal{F},\bar{B}_{1}(\mathbb{R}^{d}))$ 
	and $\lambda(x)=Min~L_{x}:=$ the smallest element of $L_{x}$. In fact, $L_{x}$ has only the two elements, satisfying the 
	equation 
	$$\|x\|^{2}+2\lambda \langle x,f(x)-x\rangle+\lambda^{2}\|f(x)-x\|^{2}-1=0,$$
	where $\langle x,y\rangle$ stands for the $L^{0}$-inner product of $x$ and $y$
	in $L^{0}(\mathcal{F},\mathbb{R}^{d})$. Similar to the root-finding formula of a quadratic equation with one variable, one can have 
	$$\lambda(x)=\frac{ \langle x,x-f(x)\rangle-\sqrt{\langle x,x-f(x)\rangle^{2}+\|x-f(x)\|^{2} (1-\|x\|)^{2}}}{\|f(x)-x\|^{2}}.$$
	This define a mapping $\lambda:L^{0}(\mathcal{F},\bar{B}_{1}(\mathbb{R}^{d})) \rightarrow L^{0}(\mathcal{F})$ and it is not difficult to verify 
	that $\lambda$ is still $\sigma$-stable and $\mathcal{T}_{c}$-continuous. By noting that $\langle x,x-f(x)\rangle=\|x\|^{2}-\langle x,f(x)\rangle$ 
	and the random Schwartz inequality: $|\langle x,f(x)\rangle|\leq \|x\|\|f(x)\|$, we have 
	$$\langle x,x-f(x)\rangle\geq \|x\|(1-\|f(x)\|)\geq 0$$ 
	for any $x\in L^{0}(\mathcal{F},\bar{B}_{1}(\mathbb{R}^{d}))$. In particular, when $x\in L^{0}(\mathcal{F},\partial \bar{B}_{1}(\mathbb{R}^{d}))$, namely, when $\|x\|=1$, one has $\lambda(x)=0$. Again, define $h:L^{0}(\mathcal{F},\bar{B}_{1}(\mathbb{R}^{d}))\rightarrow  L^{0}(\mathcal{F},\partial \bar{B}_{1}(\mathbb{R}^{d}))$ by $h(x)=x+\lambda(x)(f(x)-x)$. Then $h$ is a $\mathcal{T}_{c}$-retraction of $L^{0}(\mathcal{F},\bar{B}_{1}(\mathbb{R}^{d}))$ onto $L^{0}(\mathcal{F},\partial \bar{B}_{1}(\mathbb{R}^{d}))$,
	which contradicts Theorem \ref{thm.6.4}.	
\end{proof}

    \begin{remark}
    	The map $h$ obtained in the final part of the proof of Theorem \ref{thm.6.5} is a $\mathcal{T}_{c}$-retraction. Actually, we have obtained an 
    	interesting conclusion, namely, the random Brouwer fixed point theorem is equivalent to the fact that $L^{0}(\mathcal{F},\partial \bar{B}_{1}(\mathbb{R}^{d}))$
    	is not a $\mathcal{T}_{c}$-retraction of $L^{0}(\mathcal{F},\bar{B}_{1}(\mathbb{R}^{d}))$!
    \end{remark}

    \begin{remark}
    	By a completely similar method as used in Theorem \ref{thm.6.5}, one can easily prove that Corollary \ref{coro.6.5} is equivalent to Proposition \ref{prop.2.3}. 
    \end{remark}

\section{Commentaries on the state of the Schauder conjecture}\label{sec.7}

	Let $X$ be a Hausdorff linear topological space and $C$ be a compact convex subset of $X$. Then the famous
	Schauder conjecture asks if every continuous mapping from $C$ to $C$ has a fixed point. When $X$ is locally convex,
	the answer is affirmative, that is the famous Tychonoff fixed point theorem \cite{Ty1935}. When $X$ is not locally convex, 
	Cauty \cite{Cauty2007} and Ennassik and Taoudi \cite{ET2021} claimed that they had solved the conjecture in 
	an affirmative way, however Ma\'{n}ka \cite{Manka} and Yu \cite{Yu2024} pointed out that both Cauty \cite{Cauty2007} and Ennassik and Taoudi \cite{ET2021} contained a gap, namely, the Schauder conjecture is still open. Although the works of this paper and \cite{GWXYC2025} also involve the fixed point problems in locally noconvex spaces, for example, an $RN$ module endowed with the $(\varepsilon,\lambda)$-topology is just a metrizable locally noconvex linear topological space 
	in general, our works are not affected by the results in \cite{Cauty2007} and \cite{ET2021} since our works depend on the $\sigma$-stability of the $\mathcal{T}_{\varepsilon,\lambda}$-continuous mappings and the theory of random sequentially 
	compact $L^{0}$-convex sets rather than any works in \cite{Cauty2007} and \cite{ET2021}. Besides, the main challenge from \cite{GWXYC2025} 
	up to this paper lies in overcoming noncompactness since a random sequentially compact set is generally noncompact, in fact, our works also tread a new path in the study of the topological fixed point theory in $RN$ modules.

\vspace{3mm}

\noindent\textbf{Acknowledgements} The authors would like to thank Professors Hong-Kun Xu, Lixin Cheng and George Xianzhi Yuan for their valuable suggestions on this work.  \\






\bibliographystyle{amsplain}

\end{document}